\newtheorem{thm}{Theorem}[section]
\newtheorem{lemma}[thm]{Lemma}
\newtheorem{cor}[thm]{Corollary}
\newtheorem{proposition}[thm]{Proposition}
\newtheorem{rmk}[thm]{Remark}
\def\hyp{\mathop\mathrm{hyp}\nolimits}
\def\R{\mathbb{R}}
\def\N{\mathbb{N}}
\def\Z{\mathbb{Z}}
\def\vol{\mathrm{vol}}
\def\2Z{2^{-m}\Z^n}
\numberwithin{equation}{section}
\begin{document}

\title{A discrete approach to Zhang's projection inequality}

\author{David Alonso-Guti\'errez}
\address{\'Area de an\'alisis matem\'atico, Departamento de matem\'aticas, Facultad de Ciencias, Universidad de Zaragoza, C/ Pedro cerbuna 12, 50009 Zaragoza (Spain), IUMA}
\email{alonsod@unizar.es}
\author{Eduardo Lucas}
\address{Departamento de Matemáticas, Universidad de Murcia, Campus de Espinardo, 30100 Murcia, Spain.}
\email{eduardo.lucas@um.es}
\author{Javier Mart\'in Go\~ni}
\address{\'Area de an\'alisis matem\'atico, Departamento de matem\'aticas, Facultad de Ciencias, Universidad de Zaragoza, C/ Pedro cerbuna 12, 50009 Zaragoza (Spain), IUMA and Faculty of Computer Science and Mathematics, University of Passau, Innstrasse 33, 94032 Passau, Germany}
\email{j.martin@unizar.es; javier.martingoni@uni-passau.de}

\thanks{The first author is supported by MICINN project PID2022-137294NB-I00 and DGA project E48\_23R. The second author is funded by ``Comunidad Aut\'onoma de la Regi\'on de Murcia a trav\'es de la convocatoria de Ayudas a proyectos para el desarrollo de investigaci\'on cient\'ifica y t\'ecnica por grupos competitivos, incluida en el Programa Regional de Fomento de la Investigaci\'on Cient\'ifica y T\'ecnica (Plan de Actuaci\'on 2022) de la Fundaci\'on S\'eneca-Agencia de Ciencia y Tecnolog\'ia de la Regi\'on de Murcia, Ref.21899/PI/22'' and the grant PID2021-124157NB-I00, funded by MCIN/AEI/10.13039/501100011033/``ERDF A way of making Europe". The third author is supported by the Austrian Science Fund (FWF) Project P32405 \textit{Asymptotic Geometric Analysis and Applications.}}

\subjclass[2020]{52C07,52A20}

\keywords{Zhang's inequality, Berwald's inequality, Ball bodies, covariogram function, lattice point enumerator.}

\begin{abstract}
In this paper we will provide a new proof of the fact that for any convex body $K\subseteq\R^n$
$$
\frac{{{2n}\choose{n}}}{n^n}n\int_0^\infty r^{n-1}\vol_n(K\cap(re_n+K))dr\leq\frac{(\vol_n(K))^{n+1}}{(\vol_{n-1}(P_{e_n^\perp}(K)))^n},
$$
where $(e_i)_{i=1}^n$ denotes the canonical orthonormal basis in $\R^n$, $P_{e_n^\perp}(K)$ denotes the orthogonal projection of $K$ onto the linear hyperplane orthogonal to $e_n$, and $\vol_k$ denotes the $k$-dimensional Lebesgue measure. This inequality was proved by Gardner and Zhang and it implies Zhang's inequality. We will use our new approach to this inequality in order to prove discrete analogues of this inequality and of an equivalent version of it, where we will consider the lattice point enumerator measure instead of the Lebesgue measure, and show that from such discrete analogues we can recover the aforementioned inequality and, therefore, Zhang's inequality.
\end{abstract}

\maketitle

\section{Introduction and notation}

Given a convex body $K\subseteq\R^n$, i.e., a compact convex set with non-empty interior, the quantity given by
$$
\Vert x\Vert=\Vert x\Vert_2\vol_{n-1}(P_{x^\perp}(K)),\quad x\in\R^n,
$$
defines a norm in $\R^n$ as a consequence of Cauchy's projection formula \cite[Equation (A.45)]{G}. Here $\Vert\cdot\Vert_2$ denotes the Euclidean norm, whose closed unit ball in $\R^n$ will be denoted by $B_2^n$, $\vol_k(A)$ denotes the $k$-dimensional volume (Lebesgue measure) of a set contained in a $k$-dimensional affine subspace, and $P_{x^\perp}(K)$ denotes the orthogonal projection of $K$ onto the linear hyperplane orthogonal to $x$.  The closed unit ball of this norm is a convex body, called the polar projection body of $K$, which will be denoted by $\Pi^*K$.

As a direct consequence of \cite[Theorem. 4.1.5]{G}, for any convex body $K\subseteq\R^n$, the quantity $(\vol_n(K))^{n-1}\vol_n(\Pi^*K)$ is an affine invariant, i.e., for any non-degenerate affine map $T$ we have that $$(\vol_n(T(K)))^{n-1}\vol_n(\Pi^*T(K))=(\vol_n(K))^{n-1}\vol_n(\Pi^*K).$$ It was proved in \cite{Pe} that among all $n$-dimensional convex bodies, the affinely invariant quantity $(\vol_n(K))^{n-1}\vol_n(\Pi^*K)$ is maximized if and only if $K$ is an ellipsoid, obtaining the following inequality, which is known as the Petty projection inequality and is stronger than the isoperimetric inequality:
$$
(\vol_n(K))^{n-1}\vol_n(\Pi^*K)\leq\left(\frac{\vol_n(B_2^n)}{\vol_{n-1}(B_2^{n-1})}\right)^n.
$$

In \cite{Zh}, Zhang proved a reverse Petty projection inequality, showing that among all $n$-dimensional convex bodies, the same affinely invariant quantity is minimized if and only if $K$ is a simplex (i.e., the convex hull of $n+1$ affinely independent points). Thus, for any convex body $K\subseteq\R^n$

\begin{equation}\label{eq:Zhang}
\frac{{{2n}\choose{n}}}{n^n}\leq(\vol_n(K))^{n-1}\vol_n(\Pi^*K).
\end{equation}

Several proofs and extensions of this inequality have been given in the last decades. See for instance \cite{AJV} and \cite{GZ}, where this inequality is shown to be extremely related to the covariogram function; \cite{ABG1}, where the inequality is extended to (and can be recovered from) the setting of log-concave functions; or \cite{LRZ}, where this inequality is extended to different measures other than the Lebesgue measure. Let us recall here that the covariogram function is defined as $g_K(x)=\vol_n(K\cap(x+K))$ and that it is supported on the difference body $K-K$, which is the Minkowski sum of $K$ and its reflection with respect to the origin, $-K$.

In this paper we will focus on the approach to this inequality given by the authors in \cite{GZ}, which was also considered in \cite{ABG2} to provide a different proof of Zhang's inequality in the setting of log-concave functions. Our motivation is to obtain an approach to Zhang's inequality in the discrete setting, where we will consider discrete measures instead of the Lebesgue measure. In this approach the authors defined for a convex body $K\subseteq\R^n$ and every $p>-1$, the radial $p$-th mean body of $K$, which we denote by $R_p(K)$. It is defined  by its radial function for every $p\in(-1,\infty)\setminus\{0\}$
\begin{equation}\label{eq:DefinitionRadialFuncion}
\rho_{R_p(K)}^p(\theta)=\frac{1}{\vol_n(K)}\int_K(\vol_1(K\cap\{x+\lambda\theta\,:\lambda\geq0\}))^pdx\quad\forall\theta\in S^{n-1}
\end{equation}
and if $p=0$ it is defined by $\displaystyle{\rho_0(\theta):=\lim_{p\to0}\rho_{R_p(K)}(\theta)}$. Let us recall at this point that a set $L$ with $0\in L$ is called a star set with center $0$ if for every $x\in L$ and every $\lambda\in[0,1]$, one has $\lambda x\in L$. The radial function of a star set $L$ with $0$ as a center is defined for every $\theta\in S^{n-1}$, the unit sphere in $\R^n$, as $\rho_L(\theta)=\sup\{r\geq0\,:\,r\theta\in L\}$. For any two compact star sets $L_1,L_2$ with $0$ as a center, $L_1\subseteq L_2$ if and only if $\rho_{L_1}(\theta)\leq\rho_{L_2}(\theta)$ for every $\theta\in S^{n-1}$. In particular, for any convex body $K\subseteq\R^n$ we have that $\rho_{\Pi^*K}(\theta)=\frac{1}{\vol_{n-1}(P_{\theta^\perp}(K))}$ for every $\theta\in S^{n-1}$.

The authors showed in \cite[Theorem 5.5]{GZ} that if $-1<p\leq q$ then
\begin{equation}\label{eq:Inclusionpq}
{{n+q}\choose{n}}^\frac{1}{q}R_q(K)\subseteq{{n+p}\choose{n}}^\frac{1}{p}R_p(K).
\end{equation}
Furthermore, the authors showed in \cite[Lemma 2.1, Lemma 3.1]{GZ} that the integral defining the radial function of $R_p(K)$ in \eqref{eq:DefinitionRadialFuncion} can also be written as an integral on $P_{\theta^\perp}(K)$ and as an integral on the interval $[0,\infty)$. More precisely, on the one hand, the authors proved in \cite[Lemma 2.1]{GZ} that for every $p>-1$ and every $\theta\in S^{n-1}$, if we denote by $\langle\theta\rangle$ the 1-dimensional linear subspace spanned by $\theta$, then
$$
\rho_{R_p(K)}^p(\theta)=\frac{1}{\vol_n(K)(p+1)}\int_{P_{\theta^\perp}(K)}(\vol_1(K\cap(y+\langle\theta\rangle)))^{p+1}dy.
$$
Therefore, when $p\to(-1)^+$ and $q=n$ we obtain the following inclusion relation
\begin{equation}\label{incl}
{{2n}\choose{n}}^\frac{1}{n}R_n(K)\subseteq n\vol_n(K)\Pi^*(K),
\end{equation}
where the right-hand side appears since, for any $\theta\in S^{n-1}$,
\begin{eqnarray*}
&&\lim_{p\to(-1)^+}{{n+p}\choose{n}}^\frac{1}{p}\rho_{R_p(K)}(\theta)\cr
&=&\lim_{p\to(-1)^+}\left(\frac{\Gamma\left(1+n+p\right)}{\Gamma\left(1+n\right)\Gamma\left(1+p\right)(p+1)\vol_n(K)}\int_{P_{\theta^\perp}(K)}(\vol_1(K\cap(y+\langle\theta\rangle)))^{p+1}dy\right)^\frac{1}{p}\cr
&=&\lim_{p\to(-1)^+}\left(\frac{\Gamma\left(1+n+p\right)}{\Gamma\left(1+n\right)\Gamma\left(2+p\right)\vol_n(K)}\int_{P_{\theta^\perp}(K)}(\vol_1(K\cap(y+\langle\theta\rangle)))^{p+1}dy\right)^\frac{1}{p}\cr
&=&\left(\frac{(n-1)!}{n!\vol_n(K)}\vol_{n-1}(P_{\theta^\perp}(K))\right)^{-1}=\frac{n\vol_n(K)}{\vol_{n-1}(P_{\theta^\perp}(K))}=n\vol_n(K)\rho_{\Pi^*(K)}.\cr
\end{eqnarray*}
On the other hand, the authors also showed in \cite[Lemma 3.1]{GZ} that if $p>0$ then
$$
\rho_{R_p(K)}^p(\theta)=\frac{p}{\vol_n(K)}\int_0^{\infty}r^{p-1}\vol_n(K\cap(r\theta+K))dr
$$
and then $R_p(K)$ coincides with the $p$-th Ball body of the covariogram function of $K$, which we will denote by $K_p(g_K)$, since this expression is precisely the one that defines the radial function of the $p$-th Ball body of the covariogram function of $K$ (see Section \ref{subsec:BallBodiesLogConcave}). Therefore, the inclusion relation \eqref{incl} reads
\begin{equation}\label{inclusion2}
\frac{{{2n}\choose{n}}}{n^n}n\int_0^\infty r^{n-1}\vol_n(K\cap(r\theta+K))dr\leq\frac{(\vol_n(K))^{n+1}}{(\vol_{n-1}(P_{\theta^\perp}(K)))^n}\quad\forall\theta\in S^{n-1},
\end{equation}
which is equivalent to the main inequality that we consider in this paper (see Theorem \ref{thm:ZhangPreIntegration} below). Besides, since $K_n(g_K)=R_n(K)$,
\begin{equation}\label{eq:RadialFunctioNthBallBodyCovariogram}
\rho_{K_n(g_K)}^n(\theta)=\frac{n}{\vol_n(K)}\int_0^\infty r^{n-1}\vol_n(K\cap(r\theta+K))dr\quad\forall\theta\in S^{n-1},
\end{equation}
and \eqref{incl} can also be written in terms of the $n$-th Ball body of the covariogram function as
\begin{equation}\label{inclusion}
{{2n}\choose{n}}^\frac{1}{n}K_n(g_K)\subseteq n\vol_n(K)\Pi^*(K).
\end{equation}

We would like to point out that if we denote by $(e_i)_{i=1}^n$ the canonical orthonormal basis in $\R^n$, by considering orthogonal transformations of a convex body, proving \eqref{inclusion2} for any convex body $K\subseteq\R^n$ and any $\theta\in S^{n-1}$ is equivalent to proving the following inequality, which we state as a theorem, for every convex body:
\begin{thm}\label{thm:ZhangPreIntegration}
Let $K\subseteq\R^n$ be a convex body. Then
$$
\frac{{{2n}\choose{n}}}{n^n}n\int_0^\infty r^{n-1}\vol_n(K\cap(re_n+K))dr\leq\frac{(\vol_n(K))^{n+1}}{(\vol_{n-1}(P_{e_n^\perp}(K)))^n}.
$$
\end{thm}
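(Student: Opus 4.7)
The plan is to deduce the inequality as a special case of an inclusion between star bodies, evaluated in the direction $\theta=e_n$. Indeed, since the inequality in the statement is the expression of (\ref{inclusion2}) with $\theta=e_n$, and (\ref{inclusion2}) is in turn the radial function version of (\ref{incl}) raised to the $n$-th power, it suffices to establish the inclusion
$$
{{2n}\choose{n}}^{1/n} R_n(K)\subseteq n\vol_n(K)\Pi^*(K),
$$
and then specialize at $\theta=e_n$, using the identity
$\rho_{R_n(K)}^n(\theta)=\frac{n}{\vol_n(K)}\int_0^\infty r^{n-1}\vol_n(K\cap(r\theta+K))\,dr$
together with $\rho_{\Pi^*(K)}(\theta)=\vol_{n-1}(P_{\theta^\perp}(K))^{-1}$.

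To obtain the inclusion itself, I would exploit the monotonicity result (\ref{eq:Inclusionpq}): fixing $q=n$ and letting $p$ range over $(-1,n]$, I have
$$
{{2n}\choose{n}}^{1/n} R_n(K)\subseteq {{n+p}\choose{n}}^{1/p} R_p(K)
$$
for every such $p$. Translating this to radial functions at a fixed $\theta\in S^{n-1}$ gives
$\binom{2n}{n}^{1/n}\rho_{R_n(K)}(\theta)\leq \binom{n+p}{n}^{1/p}\rho_{R_p(K)}(\theta)$ for every $p\in(-1,n]$. I would then pass to the limit $p\to(-1)^+$ on the right-hand side and use the pointwise computation already displayed in the excerpt, which identifies
$$
\lim_{p\to(-1)^+}{{n+p}\choose{n}}^{1/p}\rho_{R_p(K)}(\theta)=n\vol_n(K)\,\rho_{\Pi^*(K)}(\theta).
$$
This yields $\binom{2n}{n}^{1/n}\rho_{R_n(K)}(\theta)\leq n\vol_n(K)\rho_{\Pi^*(K)}(\theta)$ for every $\theta\in S^{n-1}$, which is exactly the inclusion we wanted.

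Putting everything together for $\theta=e_n$, raising to the $n$-th power and rearranging produces the stated inequality. The main obstacle I anticipate is really the justification of the two non-trivial ingredients used above: the Berwald-type monotonicity (\ref{eq:Inclusionpq}) (proved in \cite{GZ}), and the pointwise asymptotic behavior of $\binom{n+p}{n}^{1/p}\rho_{R_p(K)}(\theta)$ as $p\to(-1)^+$, which forces one to control the integral $\int_{P_{\theta^\perp}(K)}\vol_1(K\cap(y+\langle\theta\rangle))^{p+1}\,dy$ uniformly in $p$ near $-1$ to apply the integral representation from \cite[Lemma 2.1]{GZ}. Both steps are standard once the machinery of radial $p$-th mean bodies is available, so the entire argument becomes essentially a limit-and-evaluate procedure based on the tools already collected in the introduction.
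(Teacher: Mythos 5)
Your argument is correct, but it is not the paper's proof --- it is precisely the original Gardner--Zhang route that the paper is written to avoid. You fix $q=n$ in the inclusion \eqref{eq:Inclusionpq}, let $p\to(-1)^+$, and use the limit computation from the introduction to land on \eqref{incl}; this works, but the monotonicity \eqref{eq:Inclusionpq} in the range $p\in(-1,0]$ rests on the extension of Berwald's inequality to negative exponents (\cite[Theorem 5.1]{GZ}), applied to the function $f_\theta(x)=\vol_1(K\cap\{x+\lambda\theta:\lambda\geq0\})$ defined on $K$ itself, plus a limiting argument. The paper instead applies the classical Berwald inequality (Theorem \ref{thm:Berwald}) only with the positive exponents $p=1$ and $q=n+1$, to the different concave function $f(y)=\vol_1(K\cap(y+\langle e_n\rangle))$ defined on the projection $P_{e_n^\perp}(K)$; combined with Fubini ($\int_{P_{e_n^\perp}(K)}f=\vol_n(K)$), the identity $\frac{1}{n}\binom{2n}{n-1}=\frac{1}{n+1}\binom{2n}{n}$, and Lemma \ref{lem:IdentitiesIntegral}, this gives the inequality directly with no limit in $p$. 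The payoff of the paper's choice is not just elementarity: because only positive exponents are used, the same scheme transfers to the discrete setting via Theorem \ref{thm:BerwaldDiscrete} (which is only available for $0<p<q$), yielding Theorem \ref{thm:DiscreteZhangPreIntegration}; your route would not discretize. Also note that the ``main obstacle'' you flag --- uniform control of $\int_{P_{\theta^\perp}(K)}\vol_1(K\cap(y+\langle\theta\rangle))^{p+1}dy$ near $p=-1$ --- is genuinely the delicate point of the [GZ] approach and would need to be carried out, not just anticipated, for your proof to be complete.
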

More precisely, given a convex body $K\subseteq\R^n$ and $\theta\in S^{n-1}$, applying Theorem \ref{thm:ZhangPreIntegration} to $U(K)$ for any orthogonal map $U$ such that $U^t(e_n)=\theta$ and taking into account that $\vol_n(U(K)\cap(re_n+U(K)))=\vol_n(K\cap(rU^t(e_n)+K))$ and that $P_{e_n^\perp}(U(K))=P_{(U^t(e_n))^\perp}(K)$ we obtain \eqref{inclusion2}. Integration in polar coordinates provides Zhang's inequality \eqref{eq:Zhang} (see Corollary \ref{cor:Zhang}).

Throughout the whole text $dG_k$ will denote the measure on $\R^k$ given by the lattice point enumerator, $G_k(A)=|A\cap\Z^k|$ for any Borel set $A\subseteq\R^k$, where we denote by $|\cdot|$ the cardinality of a set, and $dm_k$ will denote the Lebesgue measure on $\R^k$. Whenever $A\subseteq\R^n$ is contained in the affine subspace $x_0+\textrm{span}\{e_1,\dots,e_k\}$ for some $x_0\in\textrm{span}\{e_1,\dots,e_k\}^\perp$, we will denote $G_k(A)=G_n(A-x_0)$. We will denote by $d\mu$ the measure on $\R^n=\R^{n-1}\times\R$ given by $d\mu=d G_{n-1}\otimes dm_1$. That is, for every Borel set $A\subseteq\R^n$,
\begin{equation}\label{eq:DefinitionMu}
\mu(A)=\sum_{y\in e_n^\perp\cap \Z^n}\vol_1(A\cap(y+\langle e_n\rangle))=\sum_{y\in P_{e_n^\perp}(A)\cap\Z^n}\vol_1(A\cap(y+\langle e_n\rangle)),
\end{equation}
where the sum is understood as $0$ if $P_{e_n^\perp}(A)\cap\Z^n=\emptyset$. Such measure is constructed so that, when considering it in $\R^{n+1}$, i.e., $d\mu=dG_{n}\otimes dm_1$, then for any Borel set $A\subseteq\R^n$ we have that $G_n(A)$ coincides with the measure $\mu$ of the hypograph of $\chi_A$, the characteristic function of $A$:
$$
G_n(A)=\mu\left(\{(x,t)\in\R^{n+1}\,:\,0\leq t\leq\chi_A(x)\}\right).
$$
Moreover, for any Borel measurable $f:\R^n\to[0,\infty)$, we have that if $d\mu=dG_{n}\otimes dm_1$ is the measure $\mu$ considered in $\R^{n+1}$ we have that
$$
\int_{\R^n}f(x)dG_n(x)=\int_{\hyp(f)}d\mu(x,t),
$$
where $\hyp(f)$ is the hypograph of $f$
$$
\hyp(f):=\{(x,t)\in\R^{n+1}\,:\,0\leq t\leq f(x)\}.
$$
We will also denote by $C_k$ the set $C_k:=(-1,1)^k\times\{0\}^{n-k}\subseteq\R^n=\R^k\times\R^{n-k}$, which is a $k$-dimensional open (in the topology induced in $\R^k\times\{0\}^{n-k}$ by the standard topology in $\R^n$) cube. The group of orthogonal matrices of order $n$ will be denoted by $O(n)$ and the Steiner symmetrization of a bounded convex set $K$ with respect to the hyperplane $e_n^\perp$ (see Section \ref{subsec:SteinerSymmetrization}) will be denoted by $S_{e_n}(K)$.

Let us also point out that the authors proved in \cite[Theorem 5.5]{GZ} the inclusion relation \eqref{eq:Inclusionpq} by applying Berwald's inequality (see Theorem \ref{thm:Berwald} below) to the concave function $f_\theta(x)=\vol_1(K\cap\{x+\lambda\theta\,:\lambda\geq 0\})$. In order to obtain the inclusion relation \eqref{inclusion} or, equivalently \eqref{inclusion2} and then, Zhang's inequality, Berwald's inequality in the whole range of parameters $-1<p<q=n$ (and not just $0<p<q=n$) was needed.

The first aim of this paper is to give a different proof of Theorem \ref{thm:ZhangPreIntegration}. The main difference between this new proof and the one in \cite{GZ} is that we will only make use of Berwald's inequality in the range $0<p<q$. Since a version of Berwald's inequality in this range was proved in the discrete setting in \cite{ALY}, under the condition that the involved concave function attains its maximum at $0$, we will be able to use this approach in the discrete setting as well, obtaining the following theorem:

\begin{thm}\label{thm:DiscreteZhangPreIntegration}
Let $K\subseteq\R^n$ be a convex body satisfying $\displaystyle{\max_{y\in e_n^\perp}\vol_1(K\cap(y+\langle e_n\rangle))}=\vol_1(K\cap\langle e_n\rangle)$.  Then
$$
\frac{{{2n}\choose{n}}}{n^n}n\int_0^\infty r^{n-1}\mu(K\cap(re_n+K))dr\leq\frac{(\mu(S_{e_n}(K)+C_{n-1}))^{n+1}}{(G_{n-1}(P_{e_n^\perp}(K)))^n}.
$$
\end{thm}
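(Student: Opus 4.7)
The plan is to mirror the proof of Theorem \ref{thm:ZhangPreIntegration} step by step, replacing the classical Berwald inequality by its discrete analog from \cite{ALY}. Set $L:=P_{e_n^\perp}(K)\subseteq\R^{n-1}$ and define $f:L\to[0,\infty)$ by $f(y):=\vol_1(K\cap(y+\langle e_n\rangle))$. By Brunn--Minkowski $f$ is concave on $L$, and the hypothesis of the theorem says exactly that $\max_L f=f(0)$. For every $r\geq 0$ and every $y\in L$, the slice $(K\cap(re_n+K))\cap(y+\langle e_n\rangle)$ is the intersection of two parallel segments of length $f(y)$ shifted by $r$, hence has length $\max\{f(y)-r,0\}$. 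Combining this with the definition of $\mu$ in \eqref{eq:DefinitionMu}, Fubini, and the identity $n\int_0^a r^{n-1}(a-r)\,dr=\frac{a^{n+1}}{n+1}$, one obtains
\[
n\int_0^\infty r^{n-1}\mu(K\cap(re_n+K))\,dr=\frac{1}{n+1}\int_L f^{n+1}\,dG_{n-1},
\]
so the left hand side of the theorem reduces to $\frac{{{2n}\choose{n}}}{n^n(n+1)}\int_L f^{n+1}\,dG_{n-1}$.

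Since $f$ is a nonnegative concave function on $L\subseteq\R^{n-1}$ attaining its maximum at $0$, the next step is to apply the discrete Berwald inequality of \cite{ALY} to $f$ with exponents $p=1$ and $q=n+1$. With ${{(n-1)+(n+1)}\choose{n-1}}={{2n}\choose{n-1}}$ and ${{(n-1)+1}\choose{n-1}}=n$, the expected form of the inequality is
\[
{{2n}\choose{n-1}}^{\frac{1}{n+1}}\left(\frac{1}{G_{n-1}(L)}\int_L f^{n+1}\,dG_{n-1}\right)^{\frac{1}{n+1}}\leq n\cdot\frac{\mu(S_{e_n}(K)+C_{n-1})}{G_{n-1}(L)},
\]
where $\mu(S_{e_n}(K)+C_{n-1})$ plays the role of a discrete first moment of $f$. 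Indeed, slicing $S_{e_n}(K)+C_{n-1}$ by vertical lines through $\Z^{n-1}$ shows that this quantity equals $\sum_{y\in\Z^{n-1}}\sup\{f(y'):y'\in L\cap(y+C_{n-1})\}$, which is the natural discrete substitute for $\int_L f\,dG_{n-1}$ that unlocks the discrete Berwald inequality in the range $0<p\leq q$.

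Raising both sides to the power $n+1$ and rearranging yields $\int_L f^{n+1}\,dG_{n-1}\leq\frac{n^{n+1}}{{{2n}\choose{n-1}}}\cdot\frac{\mu(S_{e_n}(K)+C_{n-1})^{n+1}}{G_{n-1}(L)^n}$. The elementary identity ${{2n}\choose{n-1}}=\frac{n}{n+1}{{2n}\choose{n}}$ rewrites the prefactor as $\frac{n^n(n+1)}{{{2n}\choose{n}}}$, and multiplying both sides by $\frac{{{2n}\choose{n}}}{n^n(n+1)}$ together with the reduction in the first paragraph gives exactly the claimed inequality. The main technical hurdle is the second step: identifying the ``$p=1$'' side of the discrete Berwald inequality from \cite{ALY} precisely as $\mu(S_{e_n}(K)+C_{n-1})$, since the lattice point enumerator is not translation invariant and the classical first moment must be corrected by both a Steiner symmetrization and a cube-thickening in order to match the discrete Berwald bound.
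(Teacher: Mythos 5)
Your proof is correct and follows essentially the same route as the paper: reduce the left-hand side via the discrete analogue of the slicing identity (the paper's Lemma \ref{lem:IdentitiesSum}), apply the discrete Berwald inequality of \cite{ALY} on $P_{e_n^\perp}(K)$ with $p=1$, $q=n+1$, and identify the $p=1$ side with $\mu(S_{e_n}(K)+C_{n-1})$. The only cosmetic difference is that the paper works with $f(y)=\tfrac12\vol_1(K\cap(y+\langle e_n\rangle))$ and recognizes the first-moment term as $\mu$ of the hypograph of $f^\diamond$, whereas you keep the full length and verify the same identification by slicing directly; the two computations coincide.
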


\begin{rmk}
Notice that, even though the condition $\displaystyle{\max_{y\in e_n^\perp}\vol_1(K\cap(y+\langle e_n\rangle))}=\vol_1(K\cap\langle e_n\rangle)$ does not imply that $K\cap\Z^n\neq\emptyset$ (take, for instance, $K=\left(0,\frac{1}{2}\right)+\frac{1}{4}B_2^2\subseteq\R^2$), it implies that $0\in P_{e_n^\perp}(K)$. Thus, under this assumption, we have $G_{n-1}(P_{e_n^\perp}(K))\geq1$.

Besides, this condition means that the concave function $f:P_{e_n^\perp}(K)\to[0,\infty)$ given by $f(y)=\vol_1(K\cap(y+\langle e_n\rangle))$ attains its maximum at $0$, which is a condition that will be needed in order to apply the discrete version of Berwald's inequality (see Theorem \ref{thm:BerwaldDiscrete} below).
\end{rmk}

\begin{rmk}
Let us point out that, if $G_{n-1}(P_{e_n^\perp}(K))=1$, by means of Lemma \ref{lem:IdentitiesSum} below, we have that $P_{e_n^\perp}(K)\cap\Z^n=\{y_0\}$ for some $y_0\in\Z^n\cap e_n^\perp$ and
$$
n\int_0^\infty r^{n-1}\mu(K\cap(re_n+K))dr=\frac{1}{n+1}\vol_1(K\cap(y_0+\langle e_n\rangle))^{n+1}=\frac{(\mu(S_{e_n}(K)))^{n+1}}{(G_{n-1}(P_{e_n^\perp}(K)))^n}.
$$
In such case, Theorem \ref{thm:DiscreteZhangPreIntegration} does not provide a better estimate than this identity.
\end{rmk}

The measure $d\mu=d G_{n-1}\otimes dm_1$ is closely related to the measure $dG_n$ (see Lemma \ref{lem:muyGn} below). As a consequence of this relation we can obtain the following corollary, which gives a version of Theorem \ref{thm:ZhangPreIntegration} involving only the lattice point enumerator. This version still implies Theorem \ref{thm:ZhangPreIntegration}:

\begin{cor}\label{cor:ZhangPreIntegrationLatticePoint}
Let $K\subseteq\R^n$ be a convex body satisfying $\displaystyle{\max_{y\in e_n^\perp}\vol_1(K\cap(y+\langle e_n\rangle))}=\vol_1(K\cap\langle e_n\rangle)$. Then
\begin{eqnarray*}
&&\frac{{{2n}\choose{n}}}{n^n}n\int_0^\infty r^{n-1}G_n(K\cap(re_n+K))dr\leq\frac{{{2n}\choose{n}}}{n^n}\rho_{K-K}^n(e_n)G_{n-1}(P_{e_n^\perp}(K))\cr
&+&\frac{(G_n(S_{e_n}(K)+C_{n-1})+G_{n-1}(P_{e_n^\perp}(K)+C_{n-1}))^{n+1}}{(G_{n-1}(P_{e_n^\perp}(K)))^n}.
\end{eqnarray*}
\end{cor}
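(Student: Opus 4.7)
The plan is to deduce Corollary \ref{cor:ZhangPreIntegrationLatticePoint} from Theorem \ref{thm:DiscreteZhangPreIntegration} by comparing the measures $dG_n$ and $d\mu=dG_{n-1}\otimes dm_1$ on both sides of the inequality. The reason the comparison is essentially for free is the elementary estimate that a compact interval of length $\ell$ contains $\gamma$ lattice points with $|\gamma-\ell|\leq 1$. Applying this to the fibers $A\cap(y+\langle e_n\rangle)$, $y\in\Z^{n-1}\cap e_n^\perp$, of a convex set $A\subseteq\R^n$, and summing over $y\in P_{e_n^\perp}(A)\cap\Z^{n-1}$, yields the two-sided bound
$$
|G_n(A)-\mu(A)|\leq G_{n-1}(P_{e_n^\perp}(A)),
$$
which is the content of Lemma \ref{lem:muyGn} in the form I will use.

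For the left-hand side of the corollary, I would apply this estimate with $A_r:=K\cap(re_n+K)$. The inclusion $P_{e_n^\perp}(A_r)\subseteq P_{e_n^\perp}(K)$ then gives $G_n(A_r)\leq\mu(A_r)+G_{n-1}(P_{e_n^\perp}(K))$. Since $A_r\neq\emptyset$ precisely when $re_n\in K-K$, i.e.\ when $0\leq r\leq\rho_{K-K}(e_n)$, multiplying by $r^{n-1}$, integrating over $r\geq 0$ and evaluating $\int_0^{\rho_{K-K}(e_n)}r^{n-1}\,dr=\rho_{K-K}^n(e_n)/n$ produces
$$
n\int_0^\infty r^{n-1}G_n(A_r)\,dr\leq n\int_0^\infty r^{n-1}\mu(A_r)\,dr+\rho_{K-K}^n(e_n)G_{n-1}(P_{e_n^\perp}(K)).
$$
For the right-hand side, I would apply the two-sided estimate to $A=S_{e_n}(K)+C_{n-1}$. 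Since $C_{n-1}\subseteq e_n^\perp$ and Steiner symmetrization with respect to $e_n^\perp$ preserves the projection onto $e_n^\perp$, one has $P_{e_n^\perp}(S_{e_n}(K)+C_{n-1})=P_{e_n^\perp}(K)+C_{n-1}$, and therefore
$$
\mu(S_{e_n}(K)+C_{n-1})\leq G_n(S_{e_n}(K)+C_{n-1})+G_{n-1}(P_{e_n^\perp}(K)+C_{n-1}).
$$

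Combining these is then routine: multiply the first displayed inequality by $\binom{2n}{n}/n^n$, apply Theorem \ref{thm:DiscreteZhangPreIntegration} to bound the remaining integral against $d\mu$, and raise the last displayed inequality to the power $n+1$ (both sides being nonnegative) to replace $\mu(S_{e_n}(K)+C_{n-1})^{n+1}$ in the resulting upper bound. The extra summand $\binom{2n}{n}n^{-n}\rho_{K-K}^n(e_n)G_{n-1}(P_{e_n^\perp}(K))$ in the statement is exactly the correction produced by passing from $G_n$ to $\mu$ in the integrand on the left. No analytic input beyond Theorem \ref{thm:DiscreteZhangPreIntegration} and the trivial lattice-point/length comparison on an interval is required; the only point that needs care is the correct bookkeeping of the projection correction terms on each side.
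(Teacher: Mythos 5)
Your proposal is correct and follows essentially the same route as the paper: apply Theorem \ref{thm:DiscreteZhangPreIntegration}, then use Lemma \ref{lem:muyGn} on the fibers to pass from $\mu$ to $G_n$ on both sides, truncating the integral to $[0,\rho_{K-K}(e_n)]$ via \eqref{eq:RadialFunctionDifferenceBody} and using $P_{e_n^\perp}(K\cap(re_n+K))\subseteq P_{e_n^\perp}(K)$ to obtain the correction term. The bookkeeping you describe matches the paper's proof exactly.
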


Theorem \ref{thm:ZhangPreIntegration} can also be written (see Lemma \ref{lem:IdentitiesIntegral} below) in the following way:
\begin{thm}\label{thm:ZhangPreIntegration2}
Let $K\subseteq\R^n$ be a convex body. Then
$$
\frac{{{2n}\choose{n}}}{n^n}2^n\int_{S_{e_n}(K)}|\langle x,e_n\rangle|^ndx\leq\frac{(\vol_n(K))^{n+1}}{(\vol_{n-1}(P_{e_n^\perp}(K)))^n}.
$$
\end{thm}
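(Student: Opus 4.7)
The plan is to show that Theorem \ref{thm:ZhangPreIntegration2} is merely a rewriting of Theorem \ref{thm:ZhangPreIntegration}, by identifying the integral on the left-hand side of the former with the covariogram integral on the left-hand side of the latter. The content to be established is the identity
$$
n\int_0^\infty r^{n-1}\vol_n(K\cap(re_n+K))dr \;=\; 2^n\int_{S_{e_n}(K)}|\langle x,e_n\rangle|^n dx,
$$
after which Theorem \ref{thm:ZhangPreIntegration} yields the stated inequality, and this is precisely what Lemma \ref{lem:IdentitiesIntegral} is meant to record.

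First I would use Fubini on the chord decomposition of $K$ along $e_n$. For each $y\in P_{e_n^\perp}(K)$, the fiber $K\cap(y+\langle e_n\rangle)$ is a segment of length $\ell(y):=\vol_1(K\cap(y+\langle e_n\rangle))$, so that $\vol_1\bigl((K\cap(y+\langle e_n\rangle))\cap(re_n+K\cap(y+\langle e_n\rangle))\bigr)=\max\{\ell(y)-r,0\}$. Thus
$$
n\int_0^\infty r^{n-1}\vol_n(K\cap(re_n+K))dr=n\int_{P_{e_n^\perp}(K)}\int_0^{\ell(y)} r^{n-1}(\ell(y)-r)dr\,dy=\frac{1}{n+1}\int_{P_{e_n^\perp}(K)}\ell(y)^{n+1}dy,
$$
using the elementary computation $\int_0^\ell r^{n-1}(\ell-r)dr=\ell^{n+1}/(n(n+1))$.

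Next I would compute the right-hand side of Theorem \ref{thm:ZhangPreIntegration2} by the same slicing. Since Steiner symmetrization with respect to $e_n^\perp$ replaces each fiber $K\cap(y+\langle e_n\rangle)$ by the centered segment $y+[-\ell(y)/2,\ell(y)/2]e_n$ while preserving $\ell(y)$ and $P_{e_n^\perp}(K)$, Fubini gives
$$
2^n\int_{S_{e_n}(K)}|\langle x,e_n\rangle|^n dx=2^n\int_{P_{e_n^\perp}(K)}\int_{-\ell(y)/2}^{\ell(y)/2}|t|^n dt\,dy=\frac{2^{n+1}}{n+1}\int_{P_{e_n^\perp}(K)}\frac{\ell(y)^{n+1}}{2^{n+1}}dy=\frac{1}{n+1}\int_{P_{e_n^\perp}(K)}\ell(y)^{n+1}dy.
$$
Comparing the two displays gives the desired identity of the left-hand sides.

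Combining this identity with Theorem \ref{thm:ZhangPreIntegration} immediately yields Theorem \ref{thm:ZhangPreIntegration2}. There is no genuine obstacle here: the argument is purely a change of variables/Fubini computation, and the only thing to be careful about is that Steiner symmetrization preserves both the vertical chord lengths and the projection onto $e_n^\perp$, so that no volume factor is lost or gained; this is a standard property recalled in the preliminaries section on Steiner symmetrization, and it ensures the two formulations are strictly equivalent.
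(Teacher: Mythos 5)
Your proposal is correct and coincides with the paper's own justification: the identity you derive is exactly Lemma \ref{lem:IdentitiesIntegral} with $p=n$ (both Fubini computations match, including the constant $\frac{1}{n+1}\int_{P_{e_n^\perp}(K)}\ell(y)^{n+1}dy$), and the paper likewise obtains Theorem \ref{thm:ZhangPreIntegration2} as a rewriting of Theorem \ref{thm:ZhangPreIntegration} via that lemma (see Remark \ref{rmk:Equivalence}).
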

Let us point out that this theorem also follows from \cite[Theorem 3]{Fra} (see also \cite[Corollary 2.7]{MP} for a proof in the centrally symmetric case). We will consider a discrete version of Theorem \ref{thm:ZhangPreIntegration2} in which all the measures involved are given by the lattice point enumerator. We will prove the following theorem where, for any $m>0$ and $p\geq 1$, $B_{m}(p)$ denotes the number
\begin{equation}\label{eq:DefinitionBm(p)}
B_{m}(p)=\sum_{k=0}^{\lfloor m\rfloor}\frac{p}{m}\left(1-\frac{k}{m}\right)^{n-1}\left(\frac{k}{m}\right)^{p-1},
\end{equation}
convening that $0^0=1$.

\begin{thm}\label{thm:PurelyDiscreteZhang}
Let $K\subseteq\R^n$ be a convex body with $0\in P_{e_n^\perp}(K)$. Let us assume that \\$\displaystyle{\max_{y\in P_{e_n^\perp}(K)\cap\Z^n}G_1(S_{e_n}(K)\cap(y+\langle e_n\rangle))=G_1(S_{e_n}(K)\cap\langle e_n\rangle)}$. There exists \\$\displaystyle{m_0\geq M:=\max_{x\in S_{e_n}(K)\cap\Z^n}\langle x,e_n\rangle}$ such that $m_0>1$
\begin{eqnarray*}
&&\frac{(n+1)B_{m_0}(n+1)^{-1}}{B_{m_0}(1)^{-(n+1)}}2^n\int_{S_{e_n}(K)}|\langle x,e_n\rangle|^ndG_n(x)\leq\cr
&\leq&\frac{\left(G_n(S_{e_n}(K)+C_{n-1})+G_{n-1}(P_{e_n^\perp}(K)+C_{n-1})\right)^{n+1}}{(G_{n-1}(P_{e_n^\perp}(K)))^n}.
\end{eqnarray*}
\end{thm}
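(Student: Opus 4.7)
The plan is to mirror the argument for Theorem \ref{thm:ZhangPreIntegration2}, replacing the continuous Berwald inequality by its discrete counterpart (Theorem \ref{thm:BerwaldDiscrete}, established in \cite{ALY}), which requires the concave function it is applied to attain its maximum at the origin. The hypothesis $\max_{y\in P_{e_n^\perp}(K)\cap\Z^n}G_1(S_{e_n}(K)\cap(y+\langle e_n\rangle))=G_1(S_{e_n}(K)\cap\langle e_n\rangle)$ is precisely what guarantees this. The first step is to open the left-hand side: since $dG_n$ is the counting measure on $\Z^n$, Fubini yields
\[
\int_{S_{e_n}(K)}|\langle x,e_n\rangle|^n\,dG_n(x)=\sum_{y\in P_{e_n^\perp}(K)\cap\Z^{n-1}}\sum_{\substack{k\in\Z\\(y,k)\in S_{e_n}(K)}}|k|^n,
\]
and the symmetry of Steiner symmetrization forces the integer fiber over $y$ to be $\{-M_y,\ldots,M_y\}$, with $M_y=\lfloor h(y)/2\rfloor$ and $h(y)=\vol_1(K\cap(y+\langle e_n\rangle))$, so the inner sum is $2\sum_{k=1}^{M_y}k^n$.

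Next, the convexity of $t\mapsto t^n$ on $[0,\infty)$ and the midpoint estimate $k^n\le\int_{k-1/2}^{k+1/2}t^n\,dt$ give the fiberwise bound $(n+1)\,2^{n+1}\sum_{k=1}^{M_y}k^n\le(2M_y+1)^{n+1}-1$. Summing over $y$ and discarding the non-negative correction yields
\[
(n+1)\,2^n\int_{S_{e_n}(K)}|\langle x,e_n\rangle|^n\,dG_n(x)\le\sum_{y}f(y)^{n+1},
\]
where $f(y):=G_1(S_{e_n}(K)\cap(y+\langle e_n\rangle))=2M_y+1$. At this point I apply Theorem \ref{thm:BerwaldDiscrete} to the discretely concave function $f$, which attains its maximum at $y=0$ by hypothesis, with exponents $p=1$, $q=n+1$ and a scale parameter $m_0$ chosen large enough to meet both $m_0\ge M$ and the hypothesis of that theorem with $m_0>1$ (the case $M=0$ being trivial, since then the left-hand side vanishes). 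The resulting bound is the discrete replacement for the inequality $\int h^{n+1}\,dy\le C\,\vol_{n-1}(P_{e_n^\perp}(K))^{-n}\,\vol_n(K)^{n+1}$ that arises in the continuous proof; the $B_{m_0}$-factors play the role of the binomial coefficients, and in the limit $m_0\to\infty$ one checks that $(n+1)B_{m_0}(n+1)^{-1}/B_{m_0}(1)^{-(n+1)}$ tends to $\binom{2n}{n}/n^n$, so that Theorem \ref{thm:ZhangPreIntegration2} is recovered.

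Finally, using the Fubini identity $\sum_{y}f(y)=G_n(S_{e_n}(K))$ together with Lemma \ref{lem:muyGn}, which compares the measure $\mu$ with the lattice point enumerator of the $C_{n-1}$-enlargement, one rewrites the Berwald output into the form demanded by the statement, with $G_n(S_{e_n}(K)+C_{n-1})+G_{n-1}(P_{e_n^\perp}(K)+C_{n-1})$ on the right-hand side and $G_{n-1}(P_{e_n^\perp}(K))^n$ in the denominator. Chaining the power-sum estimate with the Berwald bound and this lattice-count manipulation delivers the stated inequality. The main technical delicacy will be pinning down the scale $m_0$ so that the constants produced by Theorem \ref{thm:BerwaldDiscrete} coincide exactly with the $B_{m_0}$-factors in the statement and so that Lemma \ref{lem:muyGn} can be invoked to reshape the right-hand side into the claimed form; by contrast, the convexity estimate for $\sum_{k=1}^{M_y}k^n$ and the passage between $G_n$, $\mu$, and the $C_{n-1}$-enlarged sets are routine.
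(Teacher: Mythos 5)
There is a genuine gap at the central step. After reducing the left-hand side to $\sum_{y}f(y)^{n+1}$ with $f(y)=G_1(S_{e_n}(K)\cap(y+\langle e_n\rangle))=2\lfloor h(y)/2\rfloor+1$, you propose to feed $f$ into Theorem \ref{thm:BerwaldDiscrete}. But that theorem requires a genuinely concave function on a convex body, and your $f$ is an integer-valued staircase obtained from the concave $h$ by a floor operation; it is not concave (already for $h$ affine and increasing the function $\lfloor h/2\rfloor$ fails concavity), and no notion of ``discrete concavity'' is available in the cited theorem. Moreover, even granting the application, Theorem \ref{thm:BerwaldDiscrete} has no tunable scale parameter: its constants are the fixed binomial coefficients $\binom{n+q}{n}$, so it cannot produce the factors $B_{m_0}(n+1)^{-1}$ and $B_{m_0}(1)^{-(n+1)}$ with a specific $m_0\geq M$, $m_0>1$, that the statement asserts. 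Your closing remark that the ``main technical delicacy'' is to make the constants of Theorem \ref{thm:BerwaldDiscrete} coincide with the $B_{m_0}$-factors is precisely the point at which the argument has no content: those factors cannot come from that theorem at all.

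What actually produces them in the paper is a different mechanism, applied in the orthogonal direction: one slices $S_{e_n}(K)$ \emph{horizontally}, sets $f(r)=G_{n-1}(S_{e_n}(K)\cap\{\langle x,e_n\rangle=r\})$ and $\tilde f(r)=G_{n-1}((S_{e_n}(K)+C_{n-1})\cap\{\langle x,e_n\rangle=r\})$, and proves a new, fully discrete Berwald-type inequality in the height variable (Theorem \ref{thm:CompletelyDiscreteBerwald}) by comparing $\tilde f$ and $f$ with the $\frac{1}{n-1}$-affine profile $g_p(x)=(1-x/m_0)^{n-1}G_{n-1}(P_{e_n^\perp}(K))$, whose discrete $q$-th moments are exactly $m_0^qB_{m_0}(q)G_{n-1}(P_{e_n^\perp}(K))$ --- this is where $B_{m_0}$ enters. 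That route requires Lemma \ref{lem:Existencem0} (existence of $m_0>1$ via continuity of $x\mapsto x^pB_x(p)$ and an intermediate value argument, with the discrete Brunn--Minkowski inequality forcing $m_0\geq M$) and Lemma \ref{lem:existenceDiscreter0} (a crossing point between $g_p$ and $\tilde f$, $f$). None of this machinery appears in your proposal, and without it both the existence of the required $m_0$ and the stated constants are unobtainable. Your fiberwise estimate $(n+1)2^{n+1}\sum_{k=1}^{M_y}k^n\leq(2M_y+1)^{n+1}-1$ is correct but is not needed in the paper's argument, which instead uses the exact symmetry identity $\sum_{k\geq0}k^nf(k)=\tfrac12\int_{S_{e_n}(K)}|\langle x,e_n\rangle|^n\,dG_n(x)$.
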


\begin{rmk}\label{rmk:Condition0InProjection}
Notice that the condition $0\in P_{e_n^\perp}(K)$ implies that $P_{e_n^\perp}(K)\cap\Z^n\neq\emptyset$ and $S_{e_n^\perp}(K)\cap\Z^n\neq\emptyset$. In fact, $P_{e_n^\perp}(K)\cap\Z^n\neq\emptyset$ if and only if $S_{e_n^\perp}(K)\cap\Z^n\neq\emptyset$. Notice that this condition also implies that $G_1(S_{e_n}(K)\cap\langle e_n\rangle)\geq1$.
\end{rmk}

\begin{rmk}\label{rmk:RemarkM=0}
Since the integral on the left-hand side is $0$ if $M=0$, in such case we can choose any $m_0>1$ and the left-hand side is well-defined and equal to $0$. Thus, the inequality in Theorem \ref{thm:PurelyDiscreteZhang} is trivial if $M=0$.
\end{rmk}

\begin{rmk}
It will be seen in  Corollary \ref{prop:FromPurelyDiscreteZhangDiscreteZhangToZhang} that Theorem \ref{thm:PurelyDiscreteZhang} also implies Theorem \ref{thm:ZhangPreIntegration2} which, by Lemma \ref{lem:IdentitiesIntegral}, is equivalent to Theorem \ref{thm:ZhangPreIntegration}.
\end{rmk}

Finally, motivated by the fact that Theorem \ref{thm:ZhangPreIntegration} provides an upper bound for the radial function of the $n$-th Ball body of the covariogram function $g_K$ of a convex body $K$ in the direction $e_n$, and the discrete version of it given by Corollary \ref{cor:ZhangPreIntegrationLatticePoint} provides an upper bound for the radial function of the $n$-th Ball body of the discrete covariogram function $\tilde{g}_K(x)=G_n(K\cap(x+K))$ of a convex body $K$ in the direction $e_n$, we initiate the study of the family of $p$-th Ball bodies of the discrete covariogram function.  We will show that, even though they might be non-convex, the convex hull of the $p$-th Ball body of $\tilde{g}_K$ is included in a homothetic copy of the $p$-th Ball body of $\tilde{g}_{K+C_n}$. We will also prove an inclusion relation similar to the one given by \eqref{eq:Inclusionpq}, whenever $0<p<q$.

The paper is organised as follows: In Section \ref{sec:Preliminaries} we will introduce some known preliminary results on which our proofs will rely. In Section \ref{sec:NewProof} we will provide our new proof of Theorem \ref{thm:ZhangPreIntegration}. In Section  \ref{sec:DiscreteZhang} we will prove Theorem \ref{thm:DiscreteZhangPreIntegration}. In order to do that, we will follow the approach used in this new proof of Theorem \ref{thm:ZhangPreIntegration}. In Section \ref{sec:AnotherDiscreteApproach}, we will prove Theorem \ref{thm:PurelyDiscreteZhang}. Finally, in Section \ref{sec:BallBodies} we will introduce the family of $p$-th Ball bodies of the discrete covariogram function and study their convexity and inclusion relations.

\section{Preliminaries}\label{sec:Preliminaries}

In this section we will introduce some well-known results that will be used in our proofs.

\subsection{The lattice point enumerator}\label{subsec:LatticePoint enumerator}
Let us recall that the lattice point enumerator measure, $dG_k$, is the measure on $\R^k$ given, for any Borel set $A\subseteq\R^k$, by
$$
G_k(A)=|A\cap\Z^k|,
$$
where $|\cdot|$ denotes cardinality of a set. Whenever $A\subseteq\R^n$ is contained in the affine subspace $x_0+\textrm{span}\{e_1,\dots,e_k\}$ for some $x_0\in\textrm{span}\{e_1,\dots,e_k\}^\perp$, we denote $G_k(A)=G_n(A-x_0)$.

\begin{rmk}\label{rmk:ApproachingVolByDilationsWithGApproachingIntegralWithG}
The measure  $dG_n$ satisfies (see \cite[Lemma 3.22]{TV} and \cite[Section 3.1]{ALY}) that for any convex body $K\subseteq\R^n$ and any bounded set $M$ containing the origin
\begin{equation}\label{eq:ApproachingVolByDilationsWithG}
\lim_{r\to\infty}\frac{G_n(rK+M)}{r^n}=\vol_n(K).
\end{equation}
In particular, taking $M=\{0\}$, for any convex body $K\subseteq\R^n$ we have
\begin{equation}\label{eq:ApproachingVolByDilationsWithGM=0}
\lim_{r\to\infty}\frac{G_n(rK)}{r^n}=\vol_n(K).
\end{equation}
Moreover, for any $f:K\to\R$ which is Riemann-integrable on $K$, we have that
\begin{equation}\label{eq:ApproachingIntegralWithG}
\lim_{r\to\infty}\frac{1}{r^n}\int_{ r K}f\left(\frac{x}{r}\right)dG_n(x)=\lim_{r\to\infty}\frac{1}{r^n}\sum_{x\in K\cap\left(\frac{1}{r}\Z^n\right)}f(x)=\int_{K}f(x)dx,
\end{equation}
where the first identity follows from the definition of the measure $dG_n$. The second identity can be obtained by extending the function $f$ to a rectangle containing $K$ as $f(x)=0$ for every $x\not\in K$, which is Riemann-integrable on the rectangle, and applying \cite[Proposition 6.3]{CS}, which is valid for Riemann-integrable functions on the rectangle. Notice that $\displaystyle{\sum_{x\in K\cap\left(\frac{1}{r}\Z^n\right)}f(x)}$ is a Riemann sum of the extension of $f$ to the rectangle, corresponding to the partition given by the rectangle intersected with cubes with vertices on $\frac{1}{r}\Z^n$.
\end{rmk}

As a consequence of Remark \ref{rmk:ApproachingVolByDilationsWithGApproachingIntegralWithG}, many continuous inequalities can be recovered from discrete inequalities (see, for instance, \cite[Section 3.1]{ALY}). We will see in this paper how Theorem \ref{thm:DiscreteZhangPreIntegration} implies Theorem \ref{thm:ZhangPreIntegration} or how Theorem \ref{thm:PurelyDiscreteZhang} implies Theorem \ref{thm:ZhangPreIntegration2}.

The measure given by the lattice point enumerator satisfies the following discrete version of the Brunn-Minkowski inequality, which was proved in \cite[Theorem 2.1]{IYNZ}, and from which one can recover the classical one \cite[Theorem 7.1.1]{Sch}. It reads as
follows:
\begin{thm}\label{thm: BM_lattice_point_no_G(K)G(L)>0}
Let $\lambda\in(0,1)$ and let $K,L\subset\R^n$ be non-empty bounded sets. Then
$$
G_n\left((1-\lambda)K+\lambda L+C_n\right)^\frac{1}{n}\geq(1-\lambda)G_n(K)^\frac{1}{n}+\lambda G_n(L)^\frac{1}{n}.
$$
\end{thm}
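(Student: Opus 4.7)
My plan is to deduce this discrete Brunn--Minkowski inequality from the classical continuous Brunn--Minkowski inequality via a \emph{cubification and rounding} argument. For a bounded set $A\subseteq\R^n$, I associate the cubification $\hat A:=(A\cap\Z^n)+Q$, where $Q:=(-1/2,1/2]^n$ is the half-open unit cube. Since the translates $p+Q$ for distinct $p\in\Z^n$ are pairwise disjoint, one has $\vol_n(\hat A)=G_n(A)$. The classical Brunn--Minkowski inequality applied to $\hat K$ and $\hat L$ then gives
$$
\vol_n\bigl((1-\lambda)\hat K+\lambda\hat L\bigr)^{1/n}\geq(1-\lambda)G_n(K)^{1/n}+\lambda G_n(L)^{1/n}.
$$
A short computation using $(a,b]+(c,d]=(a+c,b+d]$ shows $(1-\lambda)Q+\lambda Q=Q$, so
$$
(1-\lambda)\hat K+\lambda\hat L=\bigl((1-\lambda)(K\cap\Z^n)+\lambda(L\cap\Z^n)\bigr)+Q=:A_0+Q,
$$
with $A_0\subseteq(1-\lambda)K+\lambda L$.

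The second step is the auxiliary bound $\vol_n(B+Q)\leq G_n(B+C_n)$ for every bounded $B\subseteq\R^n$. Define the rounding map $\phi:\R^n\to\Z^n$ by the unique requirement $x\in\phi(x)+Q$. Since $-Q+Q=(-1,1)^n=C_n$, for $x=b+q\in B+Q$ one has
$$
\phi(x)-b=(\phi(x)-x)+q\in-Q+Q=C_n,
$$
so $\phi(B+Q)\subseteq(B+C_n)\cap\Z^n$. Each fibre $\phi^{-1}(p)=p+Q$ has volume $1$, hence $\vol_n(B+Q)\leq|\phi(B+Q)|\leq G_n(B+C_n)$. Applying this with $B=A_0$ and using monotonicity of $G_n$ through $A_0+C_n\subseteq(1-\lambda)K+\lambda L+C_n$ proves the theorem whenever both $K$ and $L$ contain at least one integer point.

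The main obstacle is the edge case $G_n(K)=0$ or $G_n(L)=0$, where $\hat K$ is empty and continuous Brunn--Minkowski degenerates. If $G_n(K)=0$ but $K\neq\emptyset$, pick $k\in K$; since the left-hand side only decreases upon replacing $K$ by $\{k\}$, it suffices to treat $K=\{k\}$. In that case I replace the empty $\hat K$ by the ``fake'' cubification $\tilde K:=\{k\}+Q$ of volume $1$, apply continuous Brunn--Minkowski to $\tilde K$ and $\hat L$, and run the rounding argument verbatim to obtain
$$
G_n\bigl((1-\lambda)K+\lambda L+C_n\bigr)^{1/n}\geq(1-\lambda)\cdot 1+\lambda G_n(L)^{1/n}\geq\lambda G_n(L)^{1/n},
$$
which is actually stronger than required. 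The fully degenerate case $G_n(K)=G_n(L)=0$ is immediate: for any $k\in K$, $l\in L$, the translate $(1-\lambda)k+\lambda l+C_n$ of $C_n$ contains at least one integer point by a coordinatewise pigeonhole, so $G_n((1-\lambda)K+\lambda L+C_n)\geq 1$.
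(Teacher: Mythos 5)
Your proof is correct. Note that the paper does not prove this statement at all: it is quoted as a known result, Theorem 2.1 of the cited reference [IYNZ], so there is no internal proof to compare against. Your cubification-and-rounding reduction to the classical Brunn--Minkowski inequality is sound and is in the same spirit as the original argument: the half-open fundamental domain $Q=(-1/2,1/2]^n$ correctly gives $\vol_n(\hat A)=G_n(A)$ and $(1-\lambda)Q+\lambda Q=Q$, the identity $-Q+Q=C_n$ makes the rounding step $\vol_n(B+Q)\leq G_n(B+C_n)$ work exactly (this is where the open cube $C_n$ in the statement is genuinely needed), and the degenerate cases are handled properly. The only cosmetic omission is the case $G_n(L)=0<G_n(K)$, which is symmetric to the case $G_n(K)=0$ you treat and follows by the identical argument with $\tilde L=\{l\}+Q$; you should state this for completeness.
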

\subsection{Steiner symmetrization}\label{subsec:SteinerSymmetrization}

Given a bounded convex set $K\subseteq\R^n$, the Steiner symmetrization of $K$ with respect to the hyperplane $e_n^\perp$ is defined as
\begin{equation}\label{eq:SteinerSymmetrization}
S_{e_n}(K)=\left\{y+\frac{t_1-t_2}{2}e_n\,:\,y\in P_{e_n^\perp}(K),\,y+t_1e_n\in K,\,y+t_2e_n\in K\right\}.
\end{equation}
That is, $S_{e_n}(K)$ is the set that we obtain by first, shifting all the segments given by $K\cap(x+\langle e_n\rangle)$ in the direction parallel to $\langle e_n\rangle$ until their centers lie in the hyperplane $e_n^\perp$, and second, leaving such segments closed if they were closed and open otherwise. If $K$ is compact then $S_{e_n}(K)$ can be written as
\begin{equation}\label{eq:SteinerSymmetrization2}
S_{e_n}(K)=\left\{(y,t)\in\R^{n-1}\times\R\,:\,y\in P_{e_n^\perp}(K),\,|t|\leq\frac{\vol_1(K\cap(y+\langle e_n\rangle))}{2}\right\}.
\end{equation}
The Steiner symmetrization preserves convexity and volume. Moreover, for every $y\in P_{e_n^\perp}(K)$ we have that $S_{e_n}(K)\cap(y+\langle e_n\rangle)$ is an interval centered at $y$ which has the same length as $K\cap(y+\langle e_n\rangle)$. Besides, from the definition of $S_{e_n}(K)$, if $K\subseteq\R^n$ is a convex body then
$$
S_{e_n}(K)\cap\{x\in\R^n\,:\,\langle x,e_n\rangle\geq0\}
$$
is the hypograph of the function $f:P_{e_n^\perp}(K)\to[0,\infty)$ given by $$f(y)=\frac{\vol_1(K\cap(y+\langle e_n\rangle))}{2},$$ which is concave by Brunn's principle (see, for instance \cite[Theorem 1.2.2]{BGVV}). It is also known that for any convex set $K$ and any $\lambda\geq0$ we have that  $S_{e_n}(\lambda K)=\lambda S_{e_n}(K)$ and that for any two convex sets $K,L\subseteq\R^n$ one has that
\begin{equation}\label{eq:SteinerSymmetrizationInclusion}
S_{e_n}(K)+S_{e_n}(L)\subseteq S_{e_n}(K+L).
\end{equation}
A list of basic properties of the Steiner symmetrization of convex bodies can be found in \cite[Sections 1.1.7 and A.5]{AGM}. Let us point out that for any bounded convex set, since $P_{e_n^\perp}(K)=S_{e_n}(K)\cap e_n^\perp$, then $S_{e_n}(K)\cap\Z^n\neq\emptyset$ if and only if $P_{e_n^\perp}(K)\cap\Z^n\neq\emptyset$, as mentioned in Remark \ref{rmk:Condition0InProjection}.
\subsection{Berwald's inequality}

Berwald's inequality  provides a reverse H\"older's inequality for $L_p$ norms of positive concave functions defined on convex bodies. It is stated in the following theorem:

\begin{thm}[Berwald's inequality]\label{thm:Berwald}
Let $K\subseteq\R^n$ be a convex body and let $f:K\to[0,\infty)$ be a concave function. Then, for any $-1<p\leq q$ we have that
\begin{equation}\label{eq:Berwald'sInequality}
\left(\frac{{{n+q}\choose{n}}}{\vol_n(K)}\int_K f^q(x)dx\right)^\frac{1}{q}\leq\left(\frac{{{n+p}\choose{n}}}{\vol_n(K)}\int_K f^p(x)dx\right)^\frac{1}{p}.
\end{equation}
\end{thm}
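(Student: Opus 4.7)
The plan is to prove Berwald's inequality by showing that the normalized quantity
$$
\phi(p) := \left(\frac{\binom{n+p}{n}}{\vol_n(K)}\int_K f^p(x)\,dx\right)^{1/p}
$$
is non-increasing in $p$ on $(-1,\infty)$. The main tool is a one-dimensional comparison with a ``pyramid'' profile, after reducing the inequality to a moment inequality for the distribution function of $f$.

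First I would set $F(t) := \vol_n(\{x\in K : f(x)\geq t\})$ and observe that, since $f\geq 0$ is concave on the convex body $K$, its hypograph is an $(n+1)$-dimensional convex body, so Brunn's principle gives that $F^{1/n}$ is concave and non-increasing on $[0,\|f\|_\infty]$, with $F(0)=\vol_n(K)$. The layer-cake formula then expresses, for $p>0$,
$$
\int_K f^p\,dx \;=\; p\int_0^\infty t^{p-1}F(t)\,dt,
$$
with a parallel expression in terms of $\vol_n(K)-F(t)$ for $p\in(-1,0)$. As the extremal profile I would take $G_c(t) := \vol_n(K)(1-t/c)_+^n$, whose $n$-th root is affine; a direct Beta-integral computation yields
$$
\frac{\binom{n+p}{n}}{\vol_n(K)}\cdot p\int_0^\infty t^{p-1}G_c(t)\,dt \;=\; c^p,
$$
so the normalized $L^p$-quantity associated with any non-negative concave function whose distribution equals $G_c$ is exactly $c$, \emph{independently of $p$}. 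This pins down the extremal profile.

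The heart of the argument is a one-sign-change lemma. Since $F^{1/n}$ is concave and $G_c^{1/n}$ is affine with $F(0)=G_c(0)$, the difference $F^{1/n}-G_c^{1/n}$ is concave and vanishes at $0$, hence $F-G_c$ changes sign at most once on $(0,\infty)$, going from non-negative to non-positive at some point $t^*$. Now fix $q$ and choose $c$ so that $\int_0^\infty t^{q-1}(F(t)-G_c(t))\,dt = 0$; by the computation above, this $c$ equals $\phi(q)$. For $-1<p<q$, the factor $t^{p-q}$ is decreasing in $t$, so bounding it above by $(t^*)^{p-q}$ on the sign-interval $(0,t^*)$ where $F-G_c\geq 0$, and bounding it below by $(t^*)^{p-q}$ on $(t^*,\infty)$ where $F-G_c\leq 0$, yields
$$
\int_0^\infty t^{p-1}(F(t)-G_c(t))\,dt \;\geq\; (t^*)^{p-q}\int_0^\infty t^{q-1}(F(t)-G_c(t))\,dt \;=\; 0,
$$
which translates into $\phi(p)\geq \phi(q)$, the desired inequality.

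The main obstacle I foresee is the bookkeeping across the different ranges of $p$. For $p>0$ one works with $F$ directly, but for $p\in(-1,0)$ the integral $\int_0^\infty t^{p-1}F(t)\,dt$ diverges at $0$, so $\int_K f^p\,dx$ must be rewritten via the complementary distribution $\vol_n(K)-F(t)$, which reverses the sign pattern of the difference and requires careful sign-tracking in the Chebyshev estimate (together with the fact that $1/p<0$ flips the final inequality in the right direction). The case $p=0$ is recovered by continuity, interpreting $\phi(0)$ as the exponential of the logarithmic mean of $f$. Once the moment comparison is uniformly set up in each regime, the one-sign-change argument proceeds without structural change.
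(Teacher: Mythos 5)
The paper does not prove Theorem \ref{thm:Berwald}; it only states it and cites Berwald's original paper for $0<p<q$ and \cite[Theorem 5.1]{GZ} for the range $-1<p<q$. Your argument is correct and is essentially the classical proof found in those references: pass to the distribution function $F(t)=\vol_n(\{f\geq t\})$, use Brunn's principle on the hypograph to get concavity of $F^{1/n}$, compare with the profile $G_c(t)=\vol_n(K)(1-t/c)_+^n$ whose normalized moments are constant in $p$, exploit the single sign change of $F-G_c$ (concave minus affine at the level of $n$-th roots, vanishing at $0$), and finish with the Chebyshev-type estimate. It is worth noting that this is exactly the continuous template that the paper discretizes in Section \ref{sec:AnotherDiscreteApproach}: your $G_c$ and crossing point $t^*$ correspond to the function $g_p$ of \eqref{eq:Definitiong_p} and the crossing point $r_0(p)$ of Lemma \ref{lem:existenceDiscreter0}, and your moment comparison is the continuous analogue of the chain of inequalities in Theorem \ref{thm:CompletelyDiscreteBerwald}. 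Two small points to tidy up. First, in the Chebyshev step your words are swapped: since $t^{p-q}$ is decreasing, it is bounded \emph{below} by $(t^*)^{p-q}$ on $(0,t^*)$ (where the integrand is nonnegative) and \emph{above} by $(t^*)^{p-q}$ on $(t^*,\infty)$ (where it is nonpositive); the displayed inequality you derive is nevertheless the correct one. Second, for $p\in(-1,0)$ you should justify that $\int_K f^p\,dx<\infty$: this follows from $F(u)\geq\vol_n(K)(1-u/\|f\|_\infty)^n$ (a consequence of the concavity of $F^{1/n}$), which gives $\vol_n(K)-F(u)=O(u)$ and hence integrability of $u^{p-1}(\vol_n(K)-F(u))$ at the origin; with that in hand the rewriting via the complementary distribution and the sign flip from $1/p<0$ work out exactly as you describe.
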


Berwald's inequality was proved in \cite[Satz 7]{Be} whenever the parameters in the statement satisfy $0<p<q$ (see also \cite[Theorem 7.2]{AAGJV} for an English translation). It was extended to the whole range $-1<p<q$ in \cite[Theorem 5.1]{GZ}.

For any convex body $K\subseteq\R^n$ and any $\theta\in S^{n-1}$ the function $f_\theta:K\to[0,\infty)$ given by $f_\theta(x)=\vol_1(K\cap\{x+\lambda \theta\,:\lambda\geq0\})$ is concave, as a direct consequence of the convexity of $K$ and the Brunn-Minkowski inequality. Therefore, applying Berwald's inequality \eqref{eq:Berwald'sInequality} for any $-1<p<q$ we obtain the inclusion relation given by \eqref{eq:Inclusionpq} and, as explained in the introduction,
applying inequality \eqref{eq:Berwald'sInequality} to $f_{e_n}$ with parameters $-1<p<q=n$ and taking limits as $p\to(-1)^+$ one obtains Theorem \ref{thm:ZhangPreIntegration}.

With the use of the discrete version of the Brunn-Minkowski inequality, Theorem \ref{thm: BM_lattice_point_no_G(K)G(L)>0}, an analogue of Theorem \ref{thm:Berwald} (in the range $0<p<q$) was proved in \cite[Theorem 1.4]{ALY}, under the condition that the concave function attains its maximum at 0. Before we state it, let us introduce the following notation: if $K\subseteq\R^n$ is a convex body and $f:K\to[0,\infty)$ is a concave function, we denote $f^\diamond:K+C_n\to[0,\infty)$ the function given by
\begin{equation}\label{eq:DefinitionDiamond}
f^\diamond(x)=\sup_{u\in C_n}\overline{f}(x+u),
\end{equation}
where $\overline{f}:\R^n\to[0,\infty)$ is the function given by
$$
\overline{f}(x)=\begin{cases}
f(x) &\textrm{ if }x\in K\\
0 &\textrm{ if }x\not\in K.
\end{cases}
$$
The function $f^\diamond$ satisfies that it is a concave function whose hypograph is the closure of the Minkowski sum of the hypograph of $f$ and $C_n\times\{0\}$.

With this notation, the discrete version of Berwald's inequality reads as follows:
\begin{thm}\label{thm:BerwaldDiscrete}
Let $K\subseteq\R^n$ be a convex body containing the origin and let $f:K\to[0,\infty)$ be a concave function such that $\displaystyle{\max_{x\in K}f(x)=f(0)}$. Then, for any $0<p<q$ we have that
$$
\left(\frac{{{n+q}\choose{n}}}{G_n(K)}\sum_{x\in K\cap\Z^n} f^q(x)dx\right)^\frac{1}{q}\leq\left(\frac{{{n+p}\choose{n}}}{G_n(K)}\sum_{x\in (K+C_n)\cap\Z^n} (f^\diamond)^p(x)dx\right)^\frac{1}{p}.
$$
%where $f^\diamond:K+C_n\to[0,\infty)$ denotes the function whose hypograph is the closure of the Minkowski sum of the hypograph of $f$ and $C_n\times\{0\}$.
\end{thm}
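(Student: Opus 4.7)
The plan is to mirror the proof of the continuous Berwald inequality (Theorem \ref{thm:Berwald}) while substituting the discrete Brunn--Minkowski inequality (Theorem \ref{thm: BM_lattice_point_no_G(K)G(L)>0}) at the critical step. The appearance of $f^\diamond$ and the sum over $(K+C_n)\cap\Z^n$ on the right-hand side is dictated by the unavoidable $+C_n$ term in Theorem \ref{thm: BM_lattice_point_no_G(K)G(L)>0}.

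First, I would reduce both sides to one-dimensional integrals via the layer-cake formula. Writing $K_t=\{x\in K\,:\,f(x)\geq t\}$ and $M=f(0)=\max_K f$,
$$\sum_{x\in K\cap\Z^n}f(x)^q=q\int_0^M t^{q-1}G_n(K_t)\,dt,\quad \sum_{x\in(K+C_n)\cap\Z^n}(f^\diamond(x))^p=p\int_0^M t^{p-1}G_n(K_t+C_n)\,dt,$$
where in the second identity I use $\{f^\diamond\geq t\}=\overline{K_t+C_n}$, whose closure adds no extra lattice points.

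Next, I would extract the key concavity. Concavity of $f$ yields $K_{(1-\lambda)t_1+\lambda t_2}\supseteq(1-\lambda)K_{t_1}+\lambda K_{t_2}$, while the hypothesis $\max f=f(0)$ ensures that $0\in K_t$ for every $t\in[0,M]$. Applying Theorem \ref{thm: BM_lattice_point_no_G(K)G(L)>0} to this inclusion gives
$$G_n(K_{(1-\lambda)t_1+\lambda t_2}+C_n)^{1/n}\geq(1-\lambda)G_n(K_{t_1})^{1/n}+\lambda G_n(K_{t_2})^{1/n},$$
so the profile $\phi(t):=G_n(K_t+C_n)^{1/n}$ is concave on $[0,M]$ and dominates $\psi(t):=G_n(K_t)^{1/n}$, with $\phi(0)=G_n(K+C_n)^{1/n}$.

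Finally, I would close with a one-dimensional weighted Berwald monotonicity. Using $\psi\leq\phi$ on the left, it suffices to show that for concave $\phi\colon[0,M]\to[0,\phi(0)]$, the quantity $\bigl(\binom{n+r}{n}r\int_0^M t^{r-1}\phi(t)^n\,dt/\phi(0)^n\bigr)^{1/r}$ is non-increasing in $r>0$, saturated by the cone $\phi_c(t)=\phi(0)(1-t/M)$ for which a direct beta-function computation gives $\binom{n+r}{n}r\int_0^M t^{r-1}\phi_c(t)^n\,dt=\phi(0)^nM^r$ for every $r$. Replacing the natural normalization $G_n(K+C_n)$ by $G_n(K)$ is then free, since $G_n(K+C_n)\geq G_n(K)$ and $1/q\leq 1/p$ together imply $(G_n(K+C_n)/G_n(K))^{1/q}\leq(G_n(K+C_n)/G_n(K))^{1/p}$. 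The main obstacle is precisely this weighted one-dimensional monotonicity: the cleanest route is to apply the classical Berwald (Theorem \ref{thm:Berwald}) to a radially symmetric concave function on a Euclidean ball whose level-set volumes reproduce $\phi^n$, an identification which is available whenever $\phi$ is concave non-increasing.
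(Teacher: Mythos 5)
Your layer-cake reduction and your identification of the Brunn--Minkowski input are correct and coincide with the opening of the actual argument (the paper itself does not reprove Theorem \ref{thm:BerwaldDiscrete} --- it quotes \cite[Theorem 1.4]{ALY} --- but it reproduces the method in Lemmas \ref{lem:Existencem0}, \ref{lem:existenceDiscreter0}, \ref{lem:r_0} and Theorems \ref{thm:CompletelyDiscreteBerwald} and \ref{thm:InclusionBallBodiesDiscreteCovariogram}). The gap is at the step ``so the profile $\phi(t)=G_n(K_t+C_n)^{1/n}$ is concave''. What Theorem \ref{thm: BM_lattice_point_no_G(K)G(L)>0} gives you is the \emph{mixed} inequality
$$
G_n(K_{(1-\lambda)t_1+\lambda t_2}+C_n)^{1/n}\;\geq\;(1-\lambda)\,G_n(K_{t_1})^{1/n}+\lambda\, G_n(K_{t_2})^{1/n},
$$
i.e.\ $\phi$ at the interpolated level dominates the convex combination of $\psi=G_n(K_\cdot)^{1/n}$, not of $\phi$ itself; to get concavity of $\phi$ you would have to apply the discrete Brunn--Minkowski inequality to the sets $K_{t_i}+C_n$, which produces an unwanted $2C_n$ on the left. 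In fact $\phi$ is essentially never concave: $\phi^n=G_n(K_t+C_n)$ is integer-valued and non-increasing, and a concave function on $[0,M]$ is continuous on the interior, so $\phi$ can only be concave if it is constant on $(0,M)$ (e.g.\ for $K=[0,10]\subseteq\R$ and $f(x)=10-x$ one gets $\phi(1)=10<\tfrac{1}{2}(\phi(1/2)+\phi(3/2))=10.5$). Consequently your one-dimensional monotonicity lemma (which is itself a true statement for concave decreasing profiles, and your reduction of it to Theorem \ref{thm:Berwald} via a radial concave function on a ball is sound) cannot be applied to $\phi$, and the middle link of the chain $\mathrm{LHS}(\psi,q)\leq\mathrm{LHS}(\phi,q)\leq\mathrm{RHS}(\phi,p)$ breaks.

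The mixed inequality is nonetheless exactly the right tool, but it must be exploited through a two-function, single-crossing argument rather than through concavity of a single profile. One introduces the cone $g_p(t)=(1-t/m_p)^nG_n(K)$ on $[0,m_p]$, anchored at $\psi(0)^n=G_n(K)$ and calibrated by the choice of $m_p$ so that $p\int_0^\infty t^{p-1}g_p(t)\,dt=p\int_0^\infty t^{p-1}\phi(t)^n\,dt=\binom{n+p}{n}^{-1}m_p^{\,p}\,G_n(K)$. The mixed inequality with $t_1=0$ (this is where $\max_Kf=f(0)$ enters) shows that there is a single crossing point $r_0$ with $\phi^n\geq g_p$ on $[0,r_0)$ and $g_p\geq\psi^n$ on $[r_0,\infty)$; the standard monotone-weight comparison then gives $\int_0^\infty t^{q-1}\psi(t)^n\,dt\leq\int_0^\infty t^{q-1}g_p(t)\,dt=\binom{n+q}{n}^{-1}q^{-1}m_p^{\,q}\,G_n(K)$ for $q>p$, which is the claimed inequality with the normalization $G_n(K)$ appearing naturally on both sides (so the final adjustment you make with the factor $G_n(K+C_n)/G_n(K)$ is not needed). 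To repair your proof you would have to supply this crossing-point construction, which is the substantive content of \cite[Theorem 1.4]{ALY} and of Lemmas \ref{lem:existenceDiscreter0} and \ref{lem:r_0} here.
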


Let us point out that the discrete version of Berwald's inequality, Theorem \ref{thm:BerwaldDiscrete}, implies the continuous version of Berwald's inequality, Theorem \ref{thm:Berwald} in the range $0<p<q$. For that matter, see \cite[Theorem 4.5]{ALY} taking into account that in the continuous version of Berwald's inequality, Theorem \ref{thm:Berwald}, we can assume without loss of generality that the concave function attains its maximum at 0.

\subsection{The covariogram function}\label{subsec:Covariogram}

Given a convex body $K\subseteq\R^n$, its covariogram function $g_K:\R^n\to[0,\infty)$ is defined as
$$
g_K(x)=\vol_n(K\cap(x+K)).
$$
The funtion $g_K$ is supported on the difference body of $K$, defined as
\begin{eqnarray}\label{eq:DefinitionDifferenceBody}
K-K&=&\{x-y\,:\,x,y\in K\}=\bigcup_{x\in K}(x-K)\cr
&=&\{x\in\R^n\,:\, K\cap(x+K)\neq\emptyset\}.
\end{eqnarray}
It is clear that $g_K$ is an even function such that $\displaystyle{\max_{x\in\R^n}g_K(x)=g_K(0)=\vol_n(K)}$. Moreover, as a consequence of the Brunn-Minkowski inequality, $g_K^\frac{1}{n}$ is concave on $K-K$ and, by Fubini's theorem,
\begin{eqnarray}\label{eq:IntegralCovariogram}
\int_{\R^n}g_K(x)dx&=&\int_{\R^n}\int_{\R^n}\chi_K(y)\chi_{x+K}(y)dydx=\int_{\R^n}\int_{\R^n}\chi_K(y)\chi_{y-K}(x)dxdy\cr
&=&\int_{\R^n}\chi_K(y)\vol_n(y-K)dy=(\vol_n(K))^2.
\end{eqnarray}
Notice also that, by \eqref{eq:DefinitionDifferenceBody}, for any $\theta\in S^{n-1}$ we have that
\begin{equation}\label{eq:RadialFunctionDifferenceBody}
K\cap(r\theta+K)=\emptyset\quad\textrm{for every }r>\rho_{K-K}(\theta).
\end{equation}
\subsection{Ball bodies of log-concave functions}\label{subsec:BallBodiesLogConcave}

A log-concave function $g:\R^n\to[0,\infty)$ is a function of the form $g(x)=e^{-u(x)}$ with $u:\R^n\to(-\infty,\infty]$ a convex function. The family of log-concave functions plays an extremely important role in the study of problems related to distribution of volume in convex bodies since, as a consequence of the Brunn-Minkowski inequality, the projection of the uniform Lebesgue measure on a convex body in $\R^n$ onto a $k$-dimensional linear subspace is a measure with a log-concave density with respect to the $k$-dimensional Lebesgue measure in that subspace.

Ball introduced in \cite{Ba}, for any measurable (not necessarily log-concave) function $g:\R^n\to[0,\infty)$, such that $g(0)>0$, and for any $p>0$, the set
\begin{equation}\label{eq:Definition p-BallBodies}
K_p(g):=\left\{x\in\R^n\,:\,p\int_0^\infty r^{p-1}g(rx)dr\geq g(0)\right\}.
\end{equation}
Clearly $0\in K_p(g)$, as $\displaystyle{\int_0^\infty r^{p-1}g(0)dr=\infty}$. Besides, for every $x\in \R^n$ and every $\lambda>0$ we have that
$$
p\int_0^\infty r^{p-1}g(r\lambda x)dr=\frac{p}{\lambda^p}\int_0^\infty s^{p-1}g(sx)ds.
$$

Therefore, for every $x\in K_p(g)$ and every $\lambda\in(0,1]$ we have that
$$
p\int_0^\infty r^{p-1}g(r\lambda x)dr=\frac{p}{\lambda^p}\int_0^\infty s^{p-1}g(sx)ds\geq p\int_0^\infty s^{p-1}g(sx)ds\geq g(0)
$$
and $\lambda x\in K_p(g)$. Thus, $K_p(g)$ is a star set with center $0$ whose radial function is given, for any $\theta\in S^{n-1}$, by
$$
\rho_{K_p(g)}(\theta)=\sup\left\{\lambda\geq0\,:\,p\int_0^\infty r^{p-1}g(r\lambda\theta)ds\geq g(0)\right\}.
$$

If $\displaystyle{\int_0^\infty s^{p-1}g(sx)ds=0}$, then $\left\{\lambda\geq0\,:\,p\int_0^\infty r^{p-1}g(r\lambda\theta)ds\geq g(0)\right\}=\{0\}$, and $\rho_{K_p(g)}(\theta)=0$. Otherwise,
\begin{eqnarray*}
\rho_{K_p(g)}(\theta)&=&\sup\left\{\lambda\geq0\,:\,p\int_0^\infty r^{p-1}g(r\lambda\theta)ds\geq g(0)\right\}\cr
&=&\sup\left\{\lambda>0\,:\,\frac{p}{\lambda^p}\int_0^\infty s^{p-1}g(s\theta)ds\geq g(0)\right\}\cr
&=&\left(\frac{p}{g(0)}\int_0^\infty s^{p-1}g(s\theta)ds\right)^\frac{1}{p}.
\end{eqnarray*}
In any case,
\begin{equation}\label{eq:RadialFunctionp-BallBodies}
\rho_{K_p(g)}(\theta)=\left(\frac{p}{g(0)}\int_0^\infty s^{p-1}g(s\theta)ds\right)^\frac{1}{p}.
\end{equation}
The importance of these sets $(K_p(g))_{p>0}$, which we will call $p$-th Ball bodies of $g$, in the study of log-concave functions relies on the following two facts: First, whenever $g:\R^n\to[0,\infty)$ is an integrable log-concave function such that $g(0)>0$, the star set $K_p(g)$ is a convex body for any $p>0$ (see \cite[Theorem 2.5.5, Lemma 2.5.6, and Proposition 2.5.7]{BGVV}). As a particular case, notice that if $K\subseteq\R^n$ is a convex body with $0\in K$, then for any $p>0$
\begin{equation}\label{eq:BallBodiesCharacteristicFunctions}
K_p(\chi_K)=K.
\end{equation}
Second, for any homogeneous function $h:\R^n\to[0,\infty)$ of degree 1 and any $p>-n$ we have, by integration in polar coordinates (see \cite[Proposition 2.5.3]{BGVV} for the particular case when $h$ is a norm on $\R^n$), that
$$
\int_{K_{n+p}(g)}h^p(x)dx=\int_{\R^n}h^p(x)\frac{g(x)}{g(0)}dx.
$$

In particular, if $g:\R^n\to[0,\infty)$ is an integrable log-concave function, such that $g(0)>0$, taking $p=0$ (see \cite[Lemma 2.5.6]{BGVV}) we obtain that  $\displaystyle{\vol_n(K_n(g))=\int_{\R^n}\frac{g(x)}{g(0)}dx}$.

The covariogram function $g_K$ of  a convex body $K\subseteq\R^n$ satisfies that $g_K(0)=\vol_n(K)>0$, $g_K$ is integrable, and $g_K^\frac{1}{n}$ is concave on its support. In particular, $g_K$ is log-concave. Consequently, the radial function defined by \eqref{eq:RadialFunctioNthBallBodyCovariogram} defines the $n$-th ball body of $g_K$, which is a convex body whose volume is, by \eqref{eq:IntegralCovariogram}
\begin{equation}\label{eq:VolumeNthBallBodyCovariogram}
\vol_n(K_n(g_K))=\int_{\R^n}\frac{g_K(x)}{g_K(0)}dx=\vol_n(K).
\end{equation}

Furthermore, for any integrable log-concave function $g:\R^n\to[0,\infty)$ such that $g(0)>0$ the following inclusion relation between Ball bodies holds (see \cite[Proposition 2.5.7]{BGVV}): if $0<p<q$ then
$$
\frac{1}{\Gamma (1+q)^\frac{1}{q}} K_q (g) \subseteq  \frac{1}{\Gamma (1+p)^\frac{1}{p}}K_p (g).
$$

Moreover, since by \cite[Lemma 3.1]{GZ} for any $p>0$ we have  $R_p(K)=K_p(g_K)$, where $R_p(K)$ is defined by \eqref{eq:DefinitionRadialFuncion}, the inclusion relation \eqref{eq:Inclusionpq} shows that if $0<p<q$
\begin{equation}\label{eq:inclusionBallBodiesCovariogram}
{{n+q}\choose{n}}^\frac{1}{q}K_q (g_K)\subseteq{{n+p}\choose{n}}^\frac{1}{p}K_p(g_K).
\end{equation}
This inclusion relation has been extended for the family of $p$-th Ball bodies of $\alpha$-concave functions (i.e., functions such that $f^\alpha$ is concave on its support) with $\alpha>0$ in  \cite[Theorem 1.2]{Ma}.
We refer the reader to \cite[Section 2.5]{BGVV} for more information on the family of $p$-th Ball bodies.

\section{Another proof of Zhang's inequality} \label{sec:NewProof}

In this section we will provide a different proof of Theorem \ref{thm:ZhangPreIntegration}, which leads to Zhang's inequality. In the same way as the proof in \cite{GZ}, it is based on the use of Berwald's inequality (Theorem \ref{thm:Berwald}). However, the choice of the concave function will only require the use of Berwald's inequality with positive parameters. Since Theorem \ref{thm:BerwaldDiscrete} provides a discrete version of Berwald's inequality for positive parameters, we will later be able to use the same approach in order to obtain Theorem \ref{thm:DiscreteZhangPreIntegration}, which provides a discrete version of Theorem \ref{thm:ZhangPreIntegration}.

We begin with the following technical lemma. Part of its proof can be found in \cite[Lemma 2.1 and Lemma 3.1]{GZ}. Nevertheless, we will provide a complete proof for the sake of completeness:

\begin{lemma}\label{lem:IdentitiesIntegral}
Let $K\subseteq\R^n$ be a convex body and let $p>0$. Then
\begin{eqnarray*}
\frac{1}{p+1}\int_{P_{e_n^\perp}(K)}\left(\vol_1(K\cap (y+\langle e_n\rangle))\right)^{p+1}dy&=&p\int_0^{\infty}r^{p-1}\vol_n(K\cap(re_n+K))dr\cr
&=&2^p\int_{S_{e_n}(K)}|\langle x,e_n\rangle|^pdx.
\end{eqnarray*}
\end{lemma}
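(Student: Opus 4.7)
The plan is to reduce everything to the one-dimensional function $f:P_{e_n^\perp}(K)\to[0,\infty)$ defined by $f(y)=\vol_1(K\cap(y+\langle e_n\rangle))$, via Fubini, and then match the three integrals by elementary beta-type computations. Throughout I will write a point of $\R^n$ as $(y,t)$ with $y\in e_n^\perp$ and $t\in\R$.

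First I would prove that the first and second expressions are equal. For each $y\in P_{e_n^\perp}(K)$, the fiber $K\cap(y+\langle e_n\rangle)$ is a closed interval, say $y+[a(y),b(y)]e_n$ with $b(y)-a(y)=f(y)$, and the fiber of $re_n+K$ over $y$ is $y+[a(y)+r,b(y)+r]e_n$. For $r>0$, the intersection of the two intervals has length exactly $(f(y)-r)_+$. Applying Fubini to $\R^n=e_n^\perp\times\R$ gives
\begin{equation*}
\vol_n(K\cap(re_n+K))=\int_{P_{e_n^\perp}(K)}(f(y)-r)_+\,dy.
\end{equation*}
Inserting this into $p\int_0^\infty r^{p-1}\vol_n(K\cap(re_n+K))\,dr$ and swapping the order of integration (all integrands are nonnegative), I am left with
\begin{equation*}
\int_{P_{e_n^\perp}(K)}\left(p\int_0^{f(y)}r^{p-1}(f(y)-r)\,dr\right)dy.
\end{equation*}
The inner integral is a one-line calculation: $p\int_0^{f(y)}r^{p-1}(f(y)-r)\,dr=f(y)^{p+1}-\frac{p}{p+1}f(y)^{p+1}=\frac{f(y)^{p+1}}{p+1}$, which gives the first equality.

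For the second equality, I would use the explicit description \eqref{eq:SteinerSymmetrization2} of $S_{e_n}(K)$: a point $(y,t)\in e_n^\perp\times\R$ lies in $S_{e_n}(K)$ iff $y\in P_{e_n^\perp}(K)$ and $|t|\leq f(y)/2$. Applying Fubini again,
\begin{equation*}
\int_{S_{e_n}(K)}|\langle x,e_n\rangle|^p\,dx=\int_{P_{e_n^\perp}(K)}\int_{-f(y)/2}^{f(y)/2}|t|^p\,dt\,dy=\frac{2}{p+1}\int_{P_{e_n^\perp}(K)}\left(\frac{f(y)}{2}\right)^{p+1}dy,
\end{equation*}
and multiplying by $2^p$ yields $\frac{1}{p+1}\int_{P_{e_n^\perp}(K)}f(y)^{p+1}\,dy$, which agrees with what was computed above.

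There is essentially no obstacle beyond carefully identifying the fibers in each Fubini step; the mildly delicate point is the computation of the length of the intersection $K\cap(re_n+K)$ fiberwise, which must be stated as $(f(y)-r)_+$ (not $f(y)-r$) so that the subsequent application of Fubini is justified for all $r>0$. Since the two sides of both equalities are clearly continuous in $p>0$, no further care with endpoints is required.
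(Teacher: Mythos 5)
Your proof is correct and follows essentially the same route as the paper's: Fubini over the fibration $\R^n=e_n^\perp\times\R$, the fiberwise length $\max\{f(y)-r,0\}$ for $K\cap(re_n+K)$, the elementary computation $p\int_0^{f(y)}r^{p-1}(f(y)-r)\,dr=\frac{f(y)^{p+1}}{p+1}$, and the description \eqref{eq:SteinerSymmetrization2} of $S_{e_n}(K)$ for the second equality. No gaps.
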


\begin{proof}
By Fubini's theorem, for any $p>0$ we have
\begin{eqnarray*}
&&p\int_0^{\infty}r^{p-1}\vol_n(K\cap(re_n+K))dr\cr
&=&p\int_0^{\infty}r^{p-1}\int_{P_{e_n^\perp}(K)}\vol_1(K\cap(re_n+K)\cap(y+\langle e_n\rangle))dydr\cr
&=&p\int_0^{\infty}r^{p-1}\int_{P_{e_n^\perp}(K)}\max\{\vol_1(K\cap (y+\langle e_n\rangle))-r,0\}dydr\cr
&=&\int_{P_{e_n^\perp}(K)}\int_0^{\vol_1(K\cap (y+\langle e_n\rangle))}pr^{p-1}(\vol_1(K\cap (y+\langle e_n\rangle))-r)drdy\cr
&=&\int_{P_{e_n^\perp}(K)}\left((\vol_1(K\cap (y+\langle e_n\rangle)))^{p+1}-\frac{p}{p+1}(\vol_1(K\cap (y+\langle e_n\rangle)))^{p+1}\right)dy\cr
&=&\frac{1}{p+1}\int_{P_{e_n^\perp}(K)}\left(\vol_1(K\cap (y+\langle e_n\rangle))\right)^{p+1}dy,
\end{eqnarray*}
which proves the first equality. Besides, we also have that
\begin{eqnarray*}
2^p\int_{S_{e_n}(K)}|\langle x,e_n\rangle|^pdx&=&2^p\int_{P_{e_n^\perp}(K)}\int_{-\frac{\vol_1(K\cap(y+\langle e_n\rangle))}{2}}^{\frac{\vol_1(K\cap(y+\langle e_n\rangle))}{2}}|t|^pdtdy\cr
&=&2^{p+1}\int_{P_{e_n^\perp}(K)}\int_{0}^{\frac{\vol_1(K\cap(y+\langle e_n\rangle))}{2}}t^pdtdy\cr
&=&\frac{1}{p+1}\int_{P_{e_n^\perp}(K)}(\vol_1(K\cap(y+\langle e_n\rangle)))^{p+1}dy,
\end{eqnarray*}
which proves the second equality.
\end{proof}

\begin{rmk}\label{rmk:Equivalence}
Notice that, by the equality between the last two quantities in the statement of Lemma \ref{lem:IdentitiesIntegral} with $p=n$, we obtain that Theorem \ref{thm:DiscreteZhangPreIntegration} can be rewritten as Theorem \ref{thm:ZhangPreIntegration2}.
\end{rmk}

We are now ready to provide our proof of Theorem \ref{thm:ZhangPreIntegration}. %We state it here as a theorem:

\begin{proof}[Proof of Theorem \ref{thm:ZhangPreIntegration}]
Let $f: P_{e_n^\perp}(K)\to[0,\infty)$ be the function given by
$$
f(y)=\vol_1(K\cap(y+\langle e_n\rangle)),
$$
which is concave by Brunn's principle \cite[Theorem 1.2.2]{BGVV}. Then, by Berwald's inequality (Theorem \ref{thm:Berwald}) applied on $P_{e_n^\perp}(K)$ with $p=1$ and $q=n+1$ we have that
$$
\left(\frac{{{2n}\choose{n-1}}}{\vol_{n-1}(P_{e_n^\perp}(K))}\int_{P_{e_n^\perp}(K)} f^{n+1}(y)dy\right)^\frac{1}{n+1}\leq\frac{n}{\vol_{n-1}(P_{e_n^\perp}(K))}\int_{P_{e_n^\perp}(K)} f(y)dy.
$$
Equivalently, taking into account that $\frac{1}{n}{{2n}\choose{n-1}}=\frac{1}{n+1}{{2n}\choose{n}}$ and that, by Fubini's theorem, $\displaystyle{\int_{P_{e_n^\perp}(K)} f(y)dy=\vol_n(K)}$, we obtain
$$
\frac{{{2n}\choose{n}}}{n^n}\frac{1}{n+1}\int_{P_{e_n^\perp}(K)} f^{n+1}(y)dy\leq\frac{(\vol_n(K))^{n+1}}{(\vol_{n-1}(P_{e_n^\perp}(K)))^n}.
$$
By Lemma \ref{lem:IdentitiesIntegral} this inequality is equivalent to
$$
\frac{{{2n}\choose{n}}}{n^n}n\int_0^\infty r^{n-1}\vol_n(K\cap(re_n+K))dr\leq\frac{(\vol_n(K))^{n+1}}{(\vol_{n-1}(P_{e_n^\perp}(K)))^n}.
$$
\end{proof}

\begin{rmk}
The main difference between our proof of Theorem \ref{thm:ZhangPreIntegration} and the one in \cite{GZ} is the concave function on which we apply Berwald's inequality (Theorem \ref{thm:Berwald}). In \cite{GZ}, the authors applied Berwald's inequality to the function $f_{e_n}:K\to[0,\infty)$ given by $f_{e_n}(x)=\vol_1(K\cap\{x+\lambda e_n\,:\lambda\geq0\})$ while we considered the function $f: P_{e_n^\perp}(K)\to[0,\infty)$ given by $f(y)=\vol_1(K\cap(y+\langle e_n\rangle))$. With this choice, we do not need to take negative exponents when applying Berwald's inequality  in order to prove the Theorem \ref{thm:ZhangPreIntegration}. The same approach will lead to Theorem \ref{thm:DiscreteZhangPreIntegration} by means of Theorem \ref{thm:BerwaldDiscrete}.
\end{rmk}

An application of Berwald's inequality \eqref{eq:Berwald'sInequality} to the same function $f: P_{e_n^\perp}(K)\to[0,\infty)$ given by $f(y)=\vol_1(K\cap(y+\langle e_n\rangle))$ with exponents $p+1<q+1$ for any $0\leq p<q$ provides the following Theorem, which extends Theorem \ref{thm:ZhangPreIntegration}:
\begin{thm}\label{thm:DifferentInclusion}
Let $K\subseteq\R^n$ be a convex body. For any $0\leq p<q$ we have
$$
\frac{{{n+q}\choose{n}}^\frac{1}{q+1}(n\vol_n(K))^\frac{1}{q+1}}{(\vol_{n-1}(P_{e_n^\perp}(K)))^\frac{1}{q+1}}\rho_{K_q(g_K)}^\frac{q}{q+1}(e_n)\leq
\frac{{{n+p}\choose{n}}^\frac{1}{p+1}(n\vol_n(K))^\frac{1}{p+1}}{(\vol_{n-1}(P_{e_n^\perp}(K)))^\frac{1}{p+1}}\rho_{K_p(g_K)}^\frac{p}{p+1}(e_n).
$$
In particular, taking $p=0$,  for every $q>0$
$$
\frac{{{n+q}\choose{n}}^\frac{1}{q+1}(n\vol_n(K))^\frac{1}{q+1}}{(\vol_{n-1}(P_{e_n^\perp}(K)))^\frac{1}{q+1}}\rho_{K_q(g_K)}^\frac{q}{q+1}(e_n)\leq
\frac{n\vol_n(K)}{\vol_{n-1}(P_{e_n^\perp}(K))}
$$
or, equivalently,
$$
\frac{{{n+q}\choose{n}}}{n^q}q\int_0^\infty r^{q-1}\vol_n(K\cap(re_n+K))dr\leq
\frac{(\vol_n(K))^{q+1}}{(\vol_{n-1}(P_{e_n^\perp}(K)))^q}.
$$
\end{thm}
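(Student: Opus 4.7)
The plan is to mimic the proof of Theorem \ref{thm:ZhangPreIntegration} given just above, applying Berwald's inequality (Theorem \ref{thm:Berwald}) on the $(n-1)$-dimensional convex body $P_{e_n^\perp}(K)$ to the same concave function
$$
f\colon P_{e_n^\perp}(K)\to[0,\infty),\qquad f(y)=\vol_1(K\cap(y+\langle e_n\rangle)),
$$
but now with the general pair of exponents $p+1<q+1$ (so that both exponents are $\geq 1>0$, which is exactly the case covered already in \cite{Be}). Concavity of $f$ comes from Brunn's principle, as before.

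First I would write down Berwald's inequality in dimension $n-1$ for $f$, which gives
$$
\left(\frac{\binom{n+q}{n-1}}{\vol_{n-1}(P_{e_n^\perp}(K))}\int_{P_{e_n^\perp}(K)}f^{q+1}(y)\,dy\right)^{\!\frac{1}{q+1}}\leq\left(\frac{\binom{n+p}{n-1}}{\vol_{n-1}(P_{e_n^\perp}(K))}\int_{P_{e_n^\perp}(K)}f^{p+1}(y)\,dy\right)^{\!\frac{1}{p+1}}.
$$
Then I would use the elementary identity $\binom{n+r}{n-1}=\frac{n}{r+1}\binom{n+r}{n}$ (valid for both $r=p$ and $r=q$) to replace $\binom{n+r}{n-1}$ by $\frac{n}{r+1}\binom{n+r}{n}$ on both sides, absorbing the factor $\frac{1}{r+1}$ into the integrals.

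Next, Lemma \ref{lem:IdentitiesIntegral} applied with parameter $r\in\{p,q\}$ yields, for $r>0$,
$$
\frac{1}{r+1}\int_{P_{e_n^\perp}(K)}f^{r+1}(y)\,dy=r\int_0^\infty s^{r-1}\vol_n(K\cap(se_n+K))\,ds,
$$
and the right-hand side equals $\vol_n(K)\,\rho_{K_r(g_K)}^{\,r}(e_n)$ by the definition of the radial function of the $r$-th Ball body (formula \eqref{eq:RadialFunctionp-BallBodies} applied to $g=g_K$, using $g_K(0)=\vol_n(K)$). Substituting this into the (rewritten) Berwald inequality yields precisely the inequality stated in Theorem \ref{thm:DifferentInclusion}.

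The only step that needs a little care is the endpoint $p=0$ in the ``In particular'' part: there the integral $\frac{1}{p+1}\int f^{p+1}$ reduces, by Fubini's theorem, to $\int f(y)\,dy=\vol_n(K)$, and one should interpret $\rho_{K_0(g_K)}^{\,0}(e_n)=1$ so that the same unified formula $\frac{1}{r+1}\int f^{r+1}=\vol_n(K)\,\rho_{K_r(g_K)}^{\,r}(e_n)$ continues to hold. With that convention the right-hand side of the first inequality collapses to $\tfrac{n\vol_n(K)}{\vol_{n-1}(P_{e_n^\perp}(K))}$, which gives the displayed bound; raising it to the $(q+1)$-st power and rewriting $\rho_{K_q(g_K)}^{\,q}(e_n)$ back in integral form via \eqref{eq:RadialFunctionp-BallBodies} produces the final ``or, equivalently'' inequality. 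There is no real obstacle here beyond keeping track of the binomial coefficients and the $p=0$ convention, since everything reduces to a direct application of Berwald's inequality in the positive range already used for Theorem \ref{thm:ZhangPreIntegration}.
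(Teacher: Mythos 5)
Your proposal is correct and is exactly the argument the paper intends: the paper derives Theorem \ref{thm:DifferentInclusion} by applying Berwald's inequality on $P_{e_n^\perp}(K)$ to the same function $f(y)=\vol_1(K\cap(y+\langle e_n\rangle))$ with exponents $p+1<q+1$, combined with Lemma \ref{lem:IdentitiesIntegral} and \eqref{eq:RadialFunctionp-BallBodies}, just as you do. Your bookkeeping of the binomial identity $\binom{n+r}{n-1}=\tfrac{n}{r+1}\binom{n+r}{n}$ and of the endpoint $p=0$ via Fubini is also consistent with the paper's treatment.
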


\begin{rmk}
By considering rotations of a convex body $K\subseteq\R^n$, the last two inequalities in Theorem \ref{thm:DifferentInclusion} being true for every convex body $K$ are also equivalent to the inclusion relation
$$
{{n+q}\choose{n}}^\frac{1}{q}K_q(g_K)\subseteq n\vol_n(K)\Pi^*(K),
$$
for any $q>0$, which is stated in \eqref{eq:Inclusionpq}. More precisely, given a convex body $K\subseteq\R^n$ and $\theta\in S^{n-1}$, applying the last inequality in Theorem \ref{thm:DifferentInclusion} to $U(K)$ with $U\in O(n)$ such that $U^t(e_n)=\theta$ and taking into account, as mentioned in the introduction, that $\vol_n(U(K)\cap(re_n+U(K)))=\vol_n(K\cap(rU^t(e_n)+K))$ and that $P_{e_n^\perp}(U(K))=P_{(U^t(e_n))^\perp}(K)$ we obtain an inequality between $\rho_{K_q(g_K)}(\theta)$ and $\rho_{\Pi^*(K)}(\theta)$ which gives the latter inclusion relation. However, the relation between the convex bodies $K_q(g_K)$ and $K_p(g_K)$ whenever $0<p<q$ given by the first inequality in Theorem \ref{thm:DifferentInclusion} is different from the inclusion relation given by \eqref{eq:Inclusionpq}, as we do not obtain an inequality between radial functions raised to the same power, which would provide an inclusion relation.
\end{rmk}

Finally, we show how Theorem \ref{thm:ZhangPreIntegration} implies Zhang's inequality \eqref{eq:Zhang}

\begin{cor}\label{cor:Zhang}
Let $K\subseteq\R^n$ be a convex body. Then
$$
\frac{{{2n}\choose{n}}}{n^n}\leq(\vol_n(K))^{n-1}\vol_n(\Pi^*K).
$$
\end{cor}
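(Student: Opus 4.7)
The plan is to integrate the pointwise inequality \eqref{inclusion2} over the unit sphere $S^{n-1}$, using polar coordinates on the left and the radial-function formula for the volume of a star body on the right. Recall that the inequality in Theorem \ref{thm:ZhangPreIntegration}, combined with the orthogonal-invariance argument spelled out in the introduction, yields
$$
\frac{\binom{2n}{n}}{n^n}\,n\int_0^\infty r^{n-1}\vol_n(K\cap(r\theta+K))\,dr\;\leq\;\frac{(\vol_n(K))^{n+1}}{(\vol_{n-1}(P_{\theta^\perp}(K)))^n}
$$
for every $\theta\in S^{n-1}$. Since $\rho_{\Pi^*K}(\theta)=\vol_{n-1}(P_{\theta^\perp}(K))^{-1}$, the right-hand side equals $(\vol_n(K))^{n+1}\rho_{\Pi^*K}^n(\theta)$.

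Next I would integrate both sides over $S^{n-1}$ with respect to the usual spherical measure. For the right-hand side, the identity $\vol_n(L)=\frac{1}{n}\int_{S^{n-1}}\rho_L^n(\theta)\,d\theta$ applied to the star body $L=\Pi^*K$ yields
$$
\int_{S^{n-1}}\frac{(\vol_n(K))^{n+1}}{(\vol_{n-1}(P_{\theta^\perp}(K)))^n}\,d\theta\;=\;n\,(\vol_n(K))^{n+1}\,\vol_n(\Pi^*K).
$$
For the left-hand side, integration in polar coordinates together with \eqref{eq:IntegralCovariogram} gives
$$
\int_{S^{n-1}}\!\int_0^\infty r^{n-1}\vol_n(K\cap(r\theta+K))\,dr\,d\theta\;=\;\int_{\R^n} g_K(x)\,dx\;=\;(\vol_n(K))^2.
$$

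Putting the two computations together produces
$$
\frac{\binom{2n}{n}}{n^n}\,n\,(\vol_n(K))^2\;\leq\;n\,(\vol_n(K))^{n+1}\,\vol_n(\Pi^*K),
$$
and dividing by $n(\vol_n(K))^2$ yields exactly the desired inequality. There is no genuine obstacle here; the only thing to be careful about is invoking Theorem \ref{thm:ZhangPreIntegration} in the direction $\theta$ (which requires the rotation argument described in the introduction) and correctly identifying the radial function of $\Pi^*K$ so that the polar-coordinate volume formula applies.
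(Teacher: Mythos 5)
Your proposal is correct and is essentially the paper's own argument: the paper also derives the directional inequality for every $\theta\in S^{n-1}$ via the rotation argument, reads it as $\binom{2n}{n}^{1/n}\rho_{K_n(g_K)}(\theta)\leq n\vol_n(K)\rho_{\Pi^*K}(\theta)$, and then takes volumes of the resulting inclusion --- which, once unpacked via the polar-coordinate volume formula and the identity $\int_{\R^n}g_K=(\vol_n(K))^2$, is exactly your spherical integration. The only cosmetic difference is that the paper packages the computation as $\vol_n(K_n(g_K))=\vol_n(K)$ (its equation for the volume of the $n$-th Ball body of the covariogram) rather than integrating the pointwise inequality directly.
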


\begin{proof}
By Theorem \ref{thm:ZhangPreIntegration} we have that for any $U\in O(n)$
$$
\frac{{{2n}\choose{n}}}{n^n}n\int_0^\infty r^{n-1}\vol_n(U(K)\cap(re_n+U(K)))dr\leq\frac{(\vol_n(U(K)))^{n+1}}{(\vol_{n-1}(P_{e_n^\perp}(U(K))))^n}.
$$
Equivalently, taking into account that $P_{e_n^\perp}(U(K))=P_{(U^t(e_n))^\perp}(K)$, \\$\vol_n(U(K)\cap(re_n+U(K)))=\vol_n(K\cap(rU^t(e_n)+K))$, and that \\$\vol_n(U(K))=\vol_n(K)$,
$$
\frac{{{2n}\choose{n}}}{n^n}n\int_0^\infty r^{n-1}\vol_n(K\cap(rU^t(e_n)+K))dr\leq\frac{(\vol_n(K))^{n+1}}{(\vol_{n-1}(P_{(U^t(e_n))^\perp}(K)))^n}.
$$
Therefore, since for every $\theta\in S^{n-1}$ there exists $U\in O(n)$ such that $U^t(e_n)=\theta$, we have that for every $\theta\in S^{n-1}$,
$$
\frac{{{2n}\choose{n}}}{n^n}n\int_0^\infty r^{n-1}\vol_n(K\cap(r\theta+K))dr\leq\frac{(\vol_n(K))^{n+1}}{(\vol_{n-1}(P_{\theta^\perp}(K)))^n}.
$$
Taking into account \eqref{eq:RadialFunctioNthBallBodyCovariogram} and that $\rho_{\Pi^*K}(\theta)=\frac{1}{\vol_{n-1}(P_{\theta^\perp}(K))}$ for every $\theta\in S^{n-1}$, we have
$$
{{2n}\choose{n}}^\frac{1}{n}\rho_{K_n(g_K)}(\theta)\leq n\vol_n(K)\rho_{\Pi^*(K)}(\theta)\quad\forall\theta\in S^{n-1},
$$
which is equivalent to the inclusion relation
$$
{{2n}\choose{n}}^\frac{1}{n}K_n(g_K)\subseteq n\vol_n(K)\Pi^*(K)
$$
given by \eqref{inclusion}. Taking volumes and using \eqref{eq:VolumeNthBallBodyCovariogram} we obtain the result.

\end{proof}

\section{A Discrete approach to Zhang's inequality}\label{sec:DiscreteZhang}

In this section we are going to prove Theorem \ref{thm:DiscreteZhangPreIntegration}, which involves the measure $d\mu=dG_{n-1}\otimes dm_1$. As stated in \eqref{eq:DefinitionMu}, for every Borel set $A\in\R^n$
$$
\mu(A)=\sum_{y\in e_n^\perp\cap \Z^n}\vol_1(A\cap(y+\langle e_n\rangle))=\sum_{y\in P_{e_n^\perp}(A)\cap\Z^n}\vol_1(A\cap(y+\langle e_n\rangle)),
$$
where the sum is understood as $0$ if $P_{e_n^\perp}(A)\cap\Z^n=\emptyset$.

Notice that there exist convex bodies $K\subseteq\R^n$ such that $K\cap\Z^n=\emptyset$ and $\mu(K)>0$, as the example $K=[-2,2]\times\left[\frac{1}{3},\frac{1}{2}\right]\subseteq\R^2$ shows. This occurs since, even though $K\cap\Z^n=\emptyset$, we have $P_{e_n^\perp}(K)\cap\Z^n\neq\emptyset$ (and therefore $S_{e_n}(K)\cap\Z^n\neq \emptyset$).

Notice also that there exist convex bodies $K\subseteq\R^n$ such that $K\cap\Z^n\neq\emptyset$ and $\mu(K)=0$, as $K=\left(\frac{1}{2},0\right)+\frac{1}{2}B_2^2\subseteq\R^2$ shows. However, if $K\subseteq\R^n$ is a convex body with $\mu(K)=0$ and $K\cap\Z^n\neq\emptyset$, then necessarily every $x\in K\cap\Z^n$ belongs to $\partial K$. Furthermore, every such $x\in K\cap\Z^n$ satisfies that $K\cap(x+\langle e_n\rangle)=\{x\}$.

Recall that for every bounded convex set $K\subseteq\R^n$, from the definition of the Steiner symmetrization \eqref{eq:SteinerSymmetrization}, we have that $P_{e_n^\perp}(K)\cap\Z^n\neq\emptyset$ if and only if $S_{e_n}(K)\cap\Z^n\neq\emptyset$ and that $P_{e_n^\perp}(K)\cap\Z^n\subseteq S_{e_n}(K)\cap\Z^n$. Besides, $\vol_1(K\cap(y+\langle e_n\rangle))=\vol_1(S_{e_n}(K)\cap(y+\langle e_n\rangle))$ for every $y\in P_{e_n^\perp}(K)$. Therefore, if $P_{e_n^\perp}(K)\cap\Z^n\neq\emptyset$, in the same way as $\vol_n(K)=\vol_n(S_{e_n}(K))$, we have that $\mu(K)=\mu(S_{e_n}(K))$ since
$$
\mu(K)=\sum_{y\in P_{e_n^\perp}(K)\cap\Z^n}\vol_1(K\cap(y+\langle e_n\rangle))=\mu(S_{e_n}(K)).
$$
If $P_{e_n^\perp}(K)\cap\Z^n=\emptyset$, then trivially $\mu(K)=0=\mu(S_{e_n}(K))$. In any case, $\mu(K)=\mu(S_{e_n}(K))$.

Let us also point out that this measure $\mu$ allows to write the right-hand side of the inequality in Theorem \ref{thm:DiscreteZhangPreIntegration} interchanging $K$ by its Steiner symmetrization $S_{e_n}(K)$, providing a smaller upper bound for the integral in the left-hand side, since for every convex body $K\subseteq\R^n$ we have that $\mu(S_{e_n}(K)+C_{n-1})\leq\mu(K+C_{n-1})$, as we show in the following lemma:
\begin{lemma}\label{lem:MuSteiner}
Let $K\subseteq\R^n$ be a bounded convex set. Then, $\mu(S_{e_n}(K))=\mu(K)$ and
$$
\mu(S_{e_n}(K)+C_{n-1})\leq\mu(K+C_{n-1}).
$$
\end{lemma}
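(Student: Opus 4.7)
The identity $\mu(S_{e_n}(K))=\mu(K)$ is essentially already established in the paragraph preceding the statement: since $P_{e_n^\perp}(K)=P_{e_n^\perp}(S_{e_n}(K))$ and Steiner symmetrization preserves the length of each chord in direction $e_n$, the defining sum is unchanged, so I would simply invoke this. The real content is the inequality, and my plan is to reduce it to a pointwise comparison of one-dimensional sections. Since $C_{n-1}=(-1,1)^{n-1}\times\{0\}$ lies entirely in $e_n^\perp$, the projections satisfy
$$P_{e_n^\perp}(K+C_{n-1})=P_{e_n^\perp}(K)+(-1,1)^{n-1}=P_{e_n^\perp}(S_{e_n}(K)+C_{n-1}),$$
so exactly the same lattice points $y\in e_n^\perp\cap\Z^n$ contribute to both of the sums appearing in the definition of $\mu$. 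It therefore suffices to establish the pointwise inequality
$$\vol_1\bigl((S_{e_n}(K)+C_{n-1})\cap(y+\langle e_n\rangle)\bigr) \leq \vol_1\bigl((K+C_{n-1})\cap(y+\langle e_n\rangle)\bigr)$$
for every $y\in e_n^\perp$ and then sum over $y\in\Z^n\cap e_n^\perp$.

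To carry out the pointwise step, I fix $y$ and set $B(y):=y+(-1,1)^{n-1}$. Noting that $y+te_n\in K+C_{n-1}$ exactly when $(y-c,t)\in K$ for some $c\in(-1,1)^{n-1}$, the slice $(K+C_{n-1})\cap(y+\langle e_n\rangle)$ is identified, via the coordinate on $\langle e_n\rangle$, with the projection onto $\langle e_n\rangle$ of the convex set $K\cap(B(y)+\langle e_n\rangle)$. Convexity then forces this projection to be a single (possibly empty) interval, whose length I can compute in terms of the upper and lower boundary functions $h_+(z):=\max\{t:z+te_n\in K\}$ and $h_-(z):=\min\{t:z+te_n\in K\}$ on $P_{e_n^\perp}(K)$: writing $A(y):=B(y)\cap P_{e_n^\perp}(K)$, the length equals $\sup_{z\in A(y)}h_+(z)-\inf_{z\in A(y)}h_-(z)$, interpreted as $0$ when $A(y)=\emptyset$.

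For the Steiner symmetrand $S_{e_n}(K)$ the same computation applies with boundary functions $\pm\tfrac12 g$, where $g(z):=\vol_1(K\cap(z+\langle e_n\rangle))=h_+(z)-h_-(z)$, yielding interval length $\sup_{z\in A(y)}g(z)$. The pointwise inequality then reduces to the trivial bound $g(z_0)=h_+(z_0)-h_-(z_0)\leq\sup_{A(y)}h_+-\inf_{A(y)}h_-$, valid for every $z_0\in A(y)$, after taking the supremum over $z_0$. I do not anticipate any serious obstacle: the whole argument is a straightforward slicing, and the only delicate point is the degenerate case $A(y)=\emptyset$, for which both slices are empty and the inequality is trivial.
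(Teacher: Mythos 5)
Your argument is correct, but it takes a different route from the paper's. The paper disposes of the inequality in two lines: since $C_{n-1}$ lies in $e_n^\perp$ it is its own Steiner symmetral, so the standard inclusion $S_{e_n}(K)+S_{e_n}(L)\subseteq S_{e_n}(K+L)$ (recalled in \eqref{eq:SteinerSymmetrizationInclusion}) gives $S_{e_n}(K)+C_{n-1}\subseteq S_{e_n}(K+C_{n-1})$, and then one applies the invariance $\mu\circ S_{e_n}=\mu$ to the bounded convex set $K+C_{n-1}$. You instead prove the required chord-by-chord comparison directly: identifying the slice $(K+C_{n-1})\cap(y+\langle e_n\rangle)$ with the projection onto $\langle e_n\rangle$ of the convex set $K\cap(B(y)+\langle e_n\rangle)$, computing its length as $\sup_{A(y)}h_+-\inf_{A(y)}h_-$, and bounding $\sup_{A(y)}(h_+-h_-)$ by that quantity. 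This is in effect a self-contained proof of the special case of \eqref{eq:SteinerSymmetrizationInclusion} that is needed (together with the chord-length preservation of $S_{e_n}$), so it is more elementary and does not rely on the cited symmetrization property, at the cost of being longer; the paper's version buys brevity by outsourcing the geometry to a standard fact. Your computation is sound, including the degenerate case $A(y)=\emptyset$; the only cosmetic point is that, since the lemma is stated for bounded (not necessarily closed) convex sets, $h_\pm$ should be defined with $\sup/\inf$ rather than $\max/\min$, which changes nothing at the level of lengths.
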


\begin{proof}
On the one hand, if $P_{e_n^\perp}(S_{e_n}(K)+C_{n-1})=\emptyset$ then $\mu(S_{e_n}(K)+C_{n-1})=0$ and the inequality is trivial. Let us assume that $P_{e_n^\perp}(S_{e_n}(K)+C_{n-1})\neq\emptyset$.  Notice also that since $C_{n-1}$ is contained in the hyperplane $e_n^\perp$, we have that \\$S_{e_n}(C_{n-1})=C_{n-1}$. Therefore, by \eqref{eq:SteinerSymmetrizationInclusion}, we have that
$$
S_{e_n}(K)+C_{n-1}=S_{e_n}(K)+S_{e_n}(C_{n-1})\subseteq S_{e_n}(K+C_{n-1}).
$$
Since $\mu(S_{e_n}(L))=\mu(L)$ for every bounded convex set $L\subseteq\R^n$ we have that
$$
\mu(S_{e_n}(K)+C_{n-1})\leq\mu(S_{e_n}(K+C_{n-1}))=\mu(K+C_{n-1}).
$$
\end{proof}

Let us recall (see Remark \ref{rmk:ApproachingVolByDilationsWithGApproachingIntegralWithG}) that the measure $dG_n$ satisfies that for any convex body $K\subseteq\R^n$ and any bounded set $M$ containing the origin
$$
\lim_{r\to\infty}\frac{G_n(rK+M)}{r^n}=\vol_n(K).
$$
We continue this section by relating $d\mu$ and $dG_n$ with an error which is controlled by the term $G_{n-1}(P_{e_n^\perp}(K))$ for any convex body $K$. This will imply that the measure $d\mu$ behaves in the same way as the measure $dG_n$ in the limit considered above.

\begin{lemma}\label{lem:muyGn}
Let $K\subseteq\R^n$ be a bounded convex set. Then
$$
G_n(K)-G_{n-1}(P_{e_n^\perp}(K))\leq\mu(K)\leq G_n(K)+G_{n-1}(P_{e_n^\perp}(K)).
$$
Consequently, for any convex body $K\subseteq\R^n$ and any bounded set $M$ containing the origin
$$
\lim_{r\to\infty}\frac{\mu(rK+M)}{r^n}=\vol_n(K).
$$
\end{lemma}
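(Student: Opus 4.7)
The plan is to sandwich $\mu(K)$ between $G_n(K)\pm G_{n-1}(P_{e_n^\perp}(K))$ by a fiberwise argument, and then deduce the asymptotic limit by a squeeze using \eqref{eq:ApproachingVolByDilationsWithG} in dimensions $n$ and $n-1$. First I would slice $K$ along the $e_n$-direction: for each $y\in P_{e_n^\perp}(K)\cap\Z^n$, the fiber $I_y:=K\cap(y+\langle e_n\rangle)$ is a bounded interval on the affine line $y+\langle e_n\rangle$, and both $G_n$ and $\mu$ decompose accordingly as
$$G_n(K)=\sum_{y\in P_{e_n^\perp}(K)\cap\Z^n}|I_y\cap\Z^n|,\qquad \mu(K)=\sum_{y\in P_{e_n^\perp}(K)\cap\Z^n}\vol_1(I_y).$$
Fibers above $y\notin P_{e_n^\perp}(K)$ are empty, and lattice points of $\Z^n$ on the line $y+\langle e_n\rangle$ occur only when $y\in\Z^n\cap e_n^\perp$, so both sums are supported on exactly $G_{n-1}(P_{e_n^\perp}(K))$ indices.

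The crux is then the elementary one-dimensional estimate $\bigl||I\cap\Z|-\vol_1(I)\bigr|\leq 1$, valid for any bounded interval $I\subseteq\R$. The upper bound follows because the disjoint intervals $[k-\tfrac{1}{2},k+\tfrac{1}{2}]$ indexed by $k\in I\cap\Z$ all lie in the $\tfrac{1}{2}$-thickening of $I$, a set of length $\vol_1(I)+1$; the lower bound follows by writing $I=[a,b]$ and using $|I\cap\Z|=\lfloor b\rfloor-\lceil a\rceil+1>\vol_1(I)-1$ when $I\cap\Z\neq\emptyset$, with the empty case being trivial since then $\vol_1(I)<1$. Parametrizing each $I_y$ as a subset of $\R$ and applying this estimate in each fiber, then summing over the $G_{n-1}(P_{e_n^\perp}(K))$ fibers, yields the desired two-sided inequality.

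For the asymptotic, I would apply the inequality just proved with $K$ replaced by $rK+M$ and divide by $r^n$. The main term $G_n(rK+M)/r^n$ tends to $\vol_n(K)$ by \eqref{eq:ApproachingVolByDilationsWithG}. Since projection commutes with scalar multiplication and Minkowski sum, $P_{e_n^\perp}(rK+M)=r\,P_{e_n^\perp}(K)+P_{e_n^\perp}(M)$, where $P_{e_n^\perp}(K)$ is a convex body in $\R^{n-1}$ and $P_{e_n^\perp}(M)\subseteq\R^{n-1}$ is a bounded set containing the origin. The $(n-1)$-dimensional case of \eqref{eq:ApproachingVolByDilationsWithG} thus gives $G_{n-1}(P_{e_n^\perp}(rK+M))/r^{n-1}\to\vol_{n-1}(P_{e_n^\perp}(K))$, so the error term $G_{n-1}(P_{e_n^\perp}(rK+M))/r^n$ is $O(1/r)$ and vanishes in the limit. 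Squeezing then yields $\mu(rK+M)/r^n\to\vol_n(K)$. No step presents a real obstacle: the whole argument rests on the elementary one-dimensional estimate followed by routine bookkeeping.
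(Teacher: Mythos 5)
Your proof is correct and follows essentially the same route as the paper: a fiberwise decomposition of $G_n$ and $\mu$ over $y\in P_{e_n^\perp}(K)\cap\Z^n$, the elementary one-dimensional estimate $\bigl|\,|I\cap\Z|-\vol_1(I)\,\bigr|\leq 1$ on each fiber (which the paper phrases via a contained segment of length $k(y)-1$ and a containing segment of length $k(y)+1$), and then the same squeeze using \eqref{eq:ApproachingVolByDilationsWithG} in dimensions $n$ and $n-1$ to kill the $O(r^{n-1})$ error term.
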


\begin{proof}
First of all, notice that if $P_{e_n^\perp}(K)\cap\Z^n=\emptyset$, then $K\cap\Z^n=\emptyset$. Thus, $G_n(K)=0$, $G_{n-1}(P_{e_n^\perp}(K))=0$ and $\mu(K)=0$ and both inequalities hold. Let us assume that $P_{e_n^\perp}(K)\cap\Z^n\neq\emptyset$. Since $K$ is bounded and convex, for every $y\in P_{e_n^\perp}(K)\cap\Z^n$, if $G_1(K\cap(y+\langle e_n\rangle))=k(y)$, then, if $k(y)\geq 1$, we have $K\cap(y+\langle e_n\rangle)$ contains a segment of length $k(y)-1$ and then
$$
\vol_1(K\cap(y+\langle e_n\rangle))\geq G_1(K\cap(y+\langle e_n\rangle))-1,
$$
while if $k(y)=0$, then the latter inequality is trivial. Furthermore, independently of whether $k(y)=0$ or $k(y)\geq 1$, there exists a segment of length $k(y)+1$ containing $k(y)+2$ points in $\Z^n$, which contains $K\cap(y+\langle e_n\rangle)$. Then
$$
\vol_1(K\cap(y+\langle e_n\rangle))\leq G_1(K\cap(y+\langle e_n\rangle))+1.
$$
\begin{figure}
\begin{center}
\begin{tikzpicture}[>=latex]
\filldraw (0,1) circle (0.05);
\filldraw (1,1) circle (0.05)node[below]{$1$};
\filldraw (2,1) circle (0.05)node[below]{$2$};
\filldraw (3,1) circle (0.05)node[below]{$k(y)$};
\filldraw (4,1) circle (0.05);
\draw(1,1)--(3,1);
\filldraw (0,0) circle (0.05);
\filldraw (1,0) circle (0.05)node[below]{$1$};
\filldraw (2,0) circle (0.05)node[below]{$2$};
\filldraw (3,0) circle (0.05)node[below]{$k(y)$};
\filldraw (4,0) circle (0.05);
\draw (0.3,0)--(3.5,0);
\filldraw (0,-1) circle (0.05);
\filldraw (1,-1) circle (0.05)node[below]{$1$};
\filldraw (2,-1) circle (0.05)node[below]{$2$};
\filldraw (3,-1) circle (0.05)node[below]{$k(y)$};
\filldraw (4,-1) circle (0.05);
\draw(0,-1)--(4,-1);
\end{tikzpicture}
\end{center}
\caption{From top to bottom, we represent the segment of length $k(y)-1$ contained in $K\cap(y+\langle e_n\rangle)$, the segment $K\cap(y+\langle e_n\rangle)$, and the segment of length $k(y)+1$ containing $K\cap(y+\langle e_n\rangle)$.}\label{fig:1}
\end{figure}
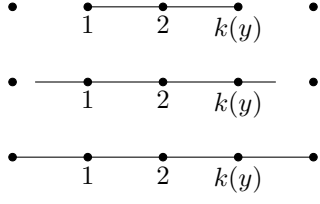
Therefore, summing in $y\in P_{e_n^\perp}(K)\cap\Z^n$, from the definition \eqref{eq:DefinitionMu} of $\mu$ we obtain that
\begin{eqnarray*}
\mu(K)&=&\sum_{y\in P_{e_n^\perp}(K)\cap\Z^n}\vol_1(K\cap(y+\langle e_n\rangle))\cr
&\geq&\sum_{y\in P_{e_n^\perp}(K)\cap\Z^n} G_1(K\cap(y+\langle e_n\rangle))-G_{n-1}(P_{e_n}^\perp(K)\cap\Z^n)
\end{eqnarray*}
and
\begin{eqnarray*}
\mu(K)&=&\sum_{y\in P_{e_n^\perp}(K)\cap\Z^n}\vol_1(K\cap(y+\langle e_n\rangle))\cr
&\leq&\sum_{y\in P_{e_n^\perp}(K)\cap\Z^n} G_1(K\cap(y+\langle e_n\rangle))+G_{n-1}(P_{e_n}^\perp(K)\cap\Z^n).
\end{eqnarray*}
Since $\displaystyle{\sum_{y\in P_{e_n^\perp}(K)\cap\Z^n} G_1(K\cap(y+\langle e_n\rangle))=G_n(K)}$, we obtain that
$$
G_n(K)-G_{n-1}(P_{e_n^\perp}(K))\leq\mu(K)\leq G_n(K)+G_{n-1}(P_{e_n^\perp}(K)).
$$
Finally, taking into account \eqref{eq:ApproachingVolByDilationsWithG} for any bounded set $M$ containing the origin, we obtain that for any convex body $K\subseteq\R^n$
$$
\lim_{r\to\infty}\frac{G_{n-1}(P_{e_n^\perp}(rK+M))}{r^n}=\lim_{r\to\infty}\frac{1}{r}\frac{G_{n-1}(rP_{e_n^\perp}(K)+P_{e_n^\perp}(M))}{r^{n-1}}=0
$$
and then
$$
\lim_{r\to\infty}\frac{\mu(rK+M)}{r^n}=\lim_{r\to\infty}\frac{G_n(rK+M)}{r^n}=\vol_n(K).
$$
\end{proof}

The following lemma is the analogue of Lemma \ref{lem:IdentitiesIntegral} when the measure $\mu$, instead of the Lebesgue measure, is considered. It is proved in the same way, since both $dm_n=dm_{n-1}\otimes dm_1$ and $d\mu=dG_{n-1}\otimes dm_1$ are product measures on $\R^n=\R^{n-1}\times\R$ where the second factor in both measures is the $1$-dimensional Lebesgue measure. We include the proof here for the sake of completeness.

\begin{lemma}\label{lem:IdentitiesSum}
Let $K\subseteq\R^n$ be a convex body and $p>0$. Then
\begin{eqnarray*}
\frac{1}{p+1}\sum_{y\in P_{e_n^\perp}(K)\cap\Z^n}\left(\vol_1(K\cap (y+\langle e_n\rangle))\right)^{p+1}&=&p\int_0^{\infty}r^{p-1}\mu(K\cap(re_n+K))dr\cr
&=&2^p\int_{S_{e_n}(K)}|\langle x,e_n\rangle|^pd\mu(x).
\end{eqnarray*}
\end{lemma}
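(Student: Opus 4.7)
The proof mirrors that of Lemma \ref{lem:IdentitiesIntegral} with essentially no change; the only input is that, like $dm_n = dm_{n-1} \otimes dm_1$, the measure $d\mu = dG_{n-1} \otimes dm_1$ is a product measure whose second factor is the $1$-dimensional Lebesgue measure. So the plan is to apply Fubini/Tonelli (legitimate since all integrands are nonnegative) to interchange the order of the sum over $y \in P_{e_n^\perp}(K)\cap\Z^n$ (which carries the $dG_{n-1}$ integration) and the $1$-dimensional integration in $r$ or in $\langle x, e_n\rangle$.

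For the first equality, I unfold the definition \eqref{eq:DefinitionMu} of $\mu$ to write
$$
p\int_0^\infty r^{p-1}\mu(K\cap(re_n+K))dr = p\int_0^\infty r^{p-1}\sum_{y\in P_{e_n^\perp}(K)\cap\Z^n}\vol_1\bigl(K\cap(re_n+K)\cap(y+\langle e_n\rangle)\bigr)dr.
$$
The key observation is that for every $y\in e_n^\perp$, one has $(re_n+K)\cap(y+\langle e_n\rangle)=re_n+\bigl(K\cap(y+\langle e_n\rangle)\bigr)$, so the intersection of $K$ with its translate $re_n+K$ inside the line $y+\langle e_n\rangle$ is the intersection of a segment with a shift of itself by $r$, hence has length $\max\{\vol_1(K\cap(y+\langle e_n\rangle))-r,\,0\}$. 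Exchanging sum and integral and setting $a(y):=\vol_1(K\cap(y+\langle e_n\rangle))$, the inner integral $\int_0^{a(y)} pr^{p-1}(a(y)-r)\,dr$ evaluates (by elementary integration) to $\tfrac{1}{p+1}a(y)^{p+1}$, yielding the first identity.

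For the second equality, I use the same product structure of $\mu$ together with the description \eqref{eq:SteinerSymmetrization2} of $S_{e_n}(K)$. Since $P_{e_n^\perp}(S_{e_n}(K))=P_{e_n^\perp}(K)$ and $\vol_1(S_{e_n}(K)\cap(y+\langle e_n\rangle))=\vol_1(K\cap(y+\langle e_n\rangle))=a(y)$ for every $y\in P_{e_n^\perp}(K)$, Fubini gives
$$
2^p\int_{S_{e_n}(K)}|\langle x,e_n\rangle|^p d\mu(x) = 2^p\sum_{y\in P_{e_n^\perp}(K)\cap\Z^n}\int_{-a(y)/2}^{a(y)/2}|t|^p\,dt = \frac{1}{p+1}\sum_{y\in P_{e_n^\perp}(K)\cap\Z^n}a(y)^{p+1},
$$
where the inner one-dimensional integral is computed by evenness of $|t|^p$ and the substitution $t\mapsto 2t$.

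There is no real obstacle to overcome here: the only novelty relative to Lemma \ref{lem:IdentitiesIntegral} is that the outer integration over $P_{e_n^\perp}(K)$ becomes a (possibly empty) sum over $P_{e_n^\perp}(K)\cap\Z^n$, and Tonelli/Fubini applies verbatim because both factor measures are $\sigma$-finite and the integrands are nonnegative. In particular the case $P_{e_n^\perp}(K)\cap\Z^n=\emptyset$ is trivial, as then by \eqref{eq:DefinitionMu} both $\mu(K\cap(re_n+K))$ and $\mu(S_{e_n}(K))$ vanish, and all three expressions are $0$.
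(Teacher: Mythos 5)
Your proposal is correct and follows essentially the same route as the paper's proof: unfold the definition of $\mu$, observe that $\vol_1(K\cap(re_n+K)\cap(y+\langle e_n\rangle))=\max\{\vol_1(K\cap(y+\langle e_n\rangle))-r,0\}$, interchange sum and integral, and evaluate the elementary one-dimensional integrals (the paper likewise treats the case $P_{e_n^\perp}(K)\cap\Z^n=\emptyset$ separately as trivial). The only cosmetic difference is that you make the Tonelli justification explicit, which the paper leaves implicit.
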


\begin{proof}
First of all, notice that if $P_{e_n^\perp}(K)\cap\Z^n=\emptyset$, then $\mu(K)=0$, and therefore, for every $r\geq 0$, we have that $\mu(K\cap(re_n+K))=0$. In such case, we also have  $P_{e_n^\perp}(S_{e_n}(K))\cap\Z^n=\emptyset$ and $\mu(S_{e_n}(K))=0$. Thus, all the identities are trivial. Let us assume that $P_{e_n^\perp}(K)\cap\Z^n\neq\emptyset$. From the definition of $\mu$ given in \eqref{eq:DefinitionMu}, we have that for any $r\geq0$
\begin{eqnarray*}
\mu(K\cap(re_n+K))&=&\sum_{y\in P_{e_n^\perp}(K)\cap\Z^n}\vol_1(K\cap(re_n+K)\cap(y+\langle e_n\rangle))\cr
&=&\sum_{y\in P_{e_n^\perp}(K)\cap\Z^n}\max\{\vol_1(K\cap (y+\langle e_n\rangle))-r,0\}.
\end{eqnarray*}
Then, for any $p>0$ we have
\begin{eqnarray*}
&&p\int_0^{\infty}r^{p-1}\mu(K\cap(re_n+K))dr\cr
&=&p\int_0^{\infty}r^{p-1}\sum_{y\in P_{e_n^\perp}(K)\cap\Z^n}\max\{\vol_1(K\cap (y+\langle e_n\rangle))-r,0\}dr\cr
&=&\sum_{y\in P_{e_n^\perp}(K)\cap\Z^n}\int_0^{\vol_1(K\cap (y+\langle e_n\rangle))}pr^{p-1}(\vol_1(K\cap (y+\langle e_n\rangle))-r)dr\cr
&=&\sum_{y\in P_{e_n^\perp}(K)\cap\Z^n}\left((\vol_1(K\cap (y+\langle e_n\rangle)))^{p+1}-\frac{p}{p+1}(\vol_1(K\cap (y+\langle e_n\rangle)))^{p+1}\right)\cr
&=&\frac{1}{p+1}\sum_{y\in P_{e_n^\perp}(K)\cap\Z^n}\left(\vol_1(K\cap (y+\langle e_n\rangle))\right)^{p+1},
\end{eqnarray*}
which proves the first identity. Finally, notice that
\begin{eqnarray*}
2^p\int_{S_{e_n}(K)}|\langle x,e_n\rangle|^pd\mu(x)&=&2^p\sum_{y\in P_{e_n^\perp}(K)\cap\Z^n}\int_{-\frac{\vol_1(K\cap(y+\langle e_n\rangle))}{2}}^{\frac{\vol_1(K\cap(y+\langle e_n\rangle))}{2}}|t|^pdt\cr
&=&2^{p+1}\sum_{y\in P_{e_n^\perp}(K)\cap\Z^n}\int_0^{\frac{\vol_1(K\cap(y+\langle e_n\rangle))}{2}}t^pdt\cr
&=&\frac{1}{p+1}\sum_{y\in P_{e_n^\perp}(K)\cap\Z^n}(\vol_1(K\cap(y+\langle e_n\rangle)))^{p+1},\cr
\end{eqnarray*}
which proves the second identity.
\end{proof}

Before proving Theorem \ref{thm:DiscreteZhangPreIntegration}, let us recall that if $K\subseteq\R^n$ is a convex body and $f:K\to[0,\infty)$ is a concave function, the function $f^\diamond:K+C_n\to[0,\infty)$, defined  in \eqref{eq:DefinitionDiamond}, is the concave function whose hypograph is the closure of the Minkowski sum of the hypograph of $f$ and $C_n\times\{0\}$.

\begin{proof}[Proof of Theorem \ref{thm:DiscreteZhangPreIntegration}]
Let $K\subseteq\R^n$ be a convex body satisfying $$\max_{y\in e_n^\perp}\vol_1(K\cap(y+\langle e_n\rangle))=\vol_1(K\cap\langle e_n\rangle),$$ and let
\begin{eqnarray*}
K_1&=&S_{e_n}(K)\cap\{x\in\R^n\,:\,\langle x,e_n\rangle\geq0\}\cr
&=&\left\{(y,t)\in\R^{n-1}\times\R\,:\,y\in P_{e_n^\perp}(K),\,0\leq t\leq\frac{\vol_1(K\cap(y+\langle e_n\rangle))}{2}\right\}.
\end{eqnarray*}
Notice that $K_1$ is the hypograph of the concave function $f:P_{e_n^\perp}(K)\to[0,\infty)$ given by
$$
f(y)=\frac{\vol_1(K\cap(y+\langle e_n\rangle))}{2}.
$$
By Theorem \ref{thm:BerwaldDiscrete} applied on $P_{e_n^\perp}(K)$ with $p=1$ and $q=n$, we have
\begin{eqnarray*}
&&\left(\frac{{{2n}\choose{n-1}}}{G_{n-1}(P_{e_n^\perp}(K))}\sum_{y\in P_{e_n^\perp}(K)\cap\Z^n}f^{n+1}(y)\right)^\frac{1}{n+1}\cr
&\leq&\frac{n}{G_{n-1}(P_{e_n^\perp}(K))}\sum_{y\in (P_{e_n^\perp}(K)+C_{n-1})\cap\Z^n}f^{\diamond}(y).
\end{eqnarray*}
Equivalently, taking into account that $\frac{1}{n}{{2n}\choose{n-1}}=\frac{1}{n+1}{{2n}\choose{n}}$, we obtain that
\begin{eqnarray*}
&&\frac{{{2n}\choose{n}}}{n^n}\sum_{y\in P_{e_n^\perp}(K)\cap\Z^n}\frac{\vol_1(K\cap(y+\langle e_n\rangle))^{n+1}}{n+1}\cr
&\leq&\frac{1}{G_{n-1}(P_{e_n^\perp}(K))^n}\left(\sum_{y\in (P_{e_n^\perp}(K)+C_{n-1})\cap\Z^n}2f^{\diamond}(y)\right)^{n+1}.
\end{eqnarray*}
On the one hand, by Lemma \ref{lem:IdentitiesSum},
$$
\frac{1}{n+1}\sum_{y\in P_{e_n^\perp}(K)}\vol_1(K\cap(y+\langle e_n\rangle))^{n+1}=n\int_0^{\infty}r^{n-1}\mu(K\cap(re_n+K))dr.
$$
On the other hand, notice that the hypograph of $f^\diamond$ is the closure of
\begin{eqnarray*}
L_1&:=&K_1+C_{n-1}=S_{e_n}(K)\cap\{x\in\R^n\,:\,\langle x,e_n\rangle\geq0\}+C_{n-1}\cr
&=&(S_{e_n}(K)+C_{n-1})\cap\{x\in\R^n\,:\,\langle x,e_n\rangle\geq0\},
\end{eqnarray*}
and then
$$
\sum_{y\in (P_{e_n^\perp}(K)+C_{n-1})\cap\Z^n}2f^{\diamond}(y)=2\mu(L_1)=\mu(S_{e_n}(K)+C_{n-1}).
$$
Therefore,
$$
\frac{{{2n}\choose{n}}}{n^n}n\int_0^\infty r^{n-1}\mu(K\cap(re_n+K))dr\leq\frac{(\mu(S_{e_n}(K)+C_{n-1}))^{n+1}}{(G_{n-1}(P_{e_n^\perp}(K)))^n}.
$$
\end{proof}

\begin{rmk}
Let us recall that in Theorem \ref{thm:ZhangPreIntegration} the direction $e_n$ does not play any special role and, as mentioned in the introduction, having Theorem \ref{thm:ZhangPreIntegration} for any convex body is equivalent to having inequality \eqref{inclusion2} for any convex body and any direction $\theta\in S^{n-1}$. However, in the discrete case the coordinate direction $e_n$ plays a special role since, at the core of the proof of the discrete version of Berwald's inequality (Theorem \ref{thm:BerwaldDiscrete}), it is essential that $P_{e_n^\perp}\Z^n$ can be identified with $\Z^{n-1}$ in order to apply the discrete version of Brunn-Minkowski inequality (Theorem \ref{thm: BM_lattice_point_no_G(K)G(L)>0}) in $e_n^\perp$ identified with $\R^{n-1}$. However, this is not the case for $P_{\theta^\perp}\Z^n$ with a generic $\theta\in S^{n-1}$. The special role of the direction $e_n$ is also reflected in the definition of the measure $\mu$ as $d\mu=dG_{n-1}\otimes dm_1$, which depends on the choice of the direction $e_n$. If we chose a different direction $\theta$, a definition of the measure $\mu$ analogue to the one given by \eqref{eq:DefinitionMu} (or even with the sum in $y\in P_{\theta^\perp}(K\cap\Z^n)$) would not be identified with a product measure where the first factor is $dG_{n-1}$, since $P_{\theta^\perp}\Z^n$ would not be identified with $\Z^{n-1}$. This fact would not allow the use of the discrete version of Brunn-Minkowski inequality (Theorem \ref{thm: BM_lattice_point_no_G(K)G(L)>0}) in $e_n^\perp$ identified with $\R^{n-1}$.
\end{rmk}

\begin{rmk}
Let us also point out that, even though the direction $e_n$ plays a special role in the definition of the measure $\mu$, by Lemma \ref{lem:muyGn} the measure $d\mu$ is closely related to the discrete measure $dG_n$, which is the counting measure on $\Z^n$ and, taking into account Remark \ref{rmk:ApproachingVolByDilationsWithGApproachingIntegralWithG}, we will be able to recover Theorem \ref{thm:ZhangPreIntegration} from Theorem \ref{thm:DiscreteZhangPreIntegration}.
\end{rmk}
Let us now obtain Corollary \ref{cor:ZhangPreIntegrationLatticePoint}, where the only measures involved are the ones given by the lattice point enumerator:

\begin{proof}[Proof of Corollary \ref{cor:ZhangPreIntegrationLatticePoint}]
Let $K\subseteq\R^n$ be a convex body satisfying \\$\displaystyle{\max_{y\in e_n^\perp}\vol_1(K\cap(y+\langle e_n\rangle))}=\vol_1(K\cap\langle e_n\rangle)$. By Theorem \ref{thm:DiscreteZhangPreIntegration} we have that
$$
\frac{{{2n}\choose{n}}}{n^n}n\int_0^\infty r^{n-1}\mu(K\cap(re_n+K))dr\leq\frac{(\mu(S_{e_n}(K)+C_{n-1}))^{n+1}}{(G_{n-1}(P_{e_n^\perp}(K)))^n}.
$$
On the one hand, by Lemma \ref{lem:muyGn} we have that
\begin{eqnarray*}
\mu(S_{e_n}(K)+C_{n-1})&\leq& G_n(S_{e_n}(K)+C_{n-1})+G_{n-1}(P_{e_n^\perp}(S_{e_n}(K)+C_{n-1}))\cr
&=&G_n(S_{e_n}(K)+C_{n-1})+G_{n-1}(P_{e_n^\perp}(K)+C_{n-1}).
\end{eqnarray*}
On the other hand, by \eqref{eq:RadialFunctionDifferenceBody}, we have that $K\cap(re_n+K)=\emptyset$ if $r>\rho_{K-K}(e_n)$. Therefore, using Lemma \ref{lem:muyGn} and the fact that $K\cap(re_n+K)\subseteq K$ we have
\begin{eqnarray*}
&&n\int_0^\infty r^{n-1}\mu(K\cap(re_n+K))dr=n\int_0^{\rho_{K-K}(e_n)} r^{n-1}\mu(K\cap(re_n+K))dr\cr
&\geq&n\int_0^{\rho_{K-K}(e_n)} r^{n-1}(G_n(K\cap(re_n+K))-G_{n-1}(P_{e_n^\perp}(K\cap(re_n+K))))dr\cr
&\geq&n\int_0^{\rho_{K-K}(e_n)} r^{n-1}(G_n(K\cap(re_n+K))-G_{n-1}(P_{e_n^\perp}(K)))dr\cr
&=&n\int_0^{\rho_{K-K}(e_n)} r^{n-1}G_n(K\cap(re_n+K))dr-\rho_{K-K}^n(e_n)G_{n-1}(P_{e_n^\perp}(K))\cr
&=&n\int_0^{\infty} r^{n-1}G_n(K\cap(re_n+K))dr-\rho_{K-K}^n(e_n)G_{n-1}(P_{e_n^\perp}(K)).\cr
\end{eqnarray*}
\end{proof}

Finally, we are going to see that Theorem \ref{thm:DiscreteZhangPreIntegration} implies Theorem \ref{thm:ZhangPreIntegration} and therefore, as seen in the proof of Corollary \ref{cor:Zhang}, Zhang's inequality \eqref{eq:Zhang}.

\begin{cor}[Theorem \ref{thm:ZhangPreIntegration}]\label{cor:DiscreteToContinuous}
Let $K\subseteq\R^n$ be a convex body. Then,
$$
\frac{{{2n}\choose{n}}}{n^n}\int_0^\infty nr^{n-1}\vol_n(K\cap(re_n+ K))dr\leq\frac{\vol_n(K)^{n+1}}{\vol_{n-1}(P_{e_n^\perp}(K))^n}.
$$
\end{cor}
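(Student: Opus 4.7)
The plan is to apply Theorem \ref{thm:DiscreteZhangPreIntegration} to the dilates $rK'$, where $K'$ is a translate of $K$ satisfying the max condition, divide by $r^{2n}$, and let $r\to\infty$, using Lemma \ref{lem:muyGn} and Remark \ref{rmk:ApproachingVolByDilationsWithGApproachingIntegralWithG} to pass to the continuous limit.

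First, all three quantities appearing in Theorem \ref{thm:ZhangPreIntegration}, namely $\vol_n(K\cap(re_n+K))$, $\vol_n(K)$ and $\vol_{n-1}(P_{e_n^\perp}(K))$, are invariant under translations of $K$ by vectors in $e_n^\perp$. Since $y\mapsto\vol_1(K\cap(y+\langle e_n\rangle))$ is concave and upper semi-continuous on the compact set $P_{e_n^\perp}(K)$, it attains its maximum at some $y_0\in P_{e_n^\perp}(K)$. Replacing $K$ by $K':=K-y_0$, we may assume $\max_{y\in e_n^\perp}\vol_1(K'\cap(y+\langle e_n\rangle))=\vol_1(K'\cap\langle e_n\rangle)$; this condition is preserved under dilation, so $rK'$ satisfies the hypothesis of Theorem \ref{thm:DiscreteZhangPreIntegration} for every $r>0$.

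Next, applying Theorem \ref{thm:DiscreteZhangPreIntegration} to $rK'$ and substituting $s=rt$ in the integral on the left-hand side, using $rK'\cap(rte_n+rK')=r(K'\cap(te_n+K'))$, yields
$$
\frac{\binom{2n}{n}}{n^n}n\,r^n\int_0^\infty t^{n-1}\mu\bigl(r(K'\cap(te_n+K'))\bigr)\,dt\leq\frac{(\mu(rS_{e_n}(K')+C_{n-1}))^{n+1}}{(G_{n-1}(rP_{e_n^\perp}(K')))^n}.
$$
Dividing both sides by $r^{2n}$ rewrites this as
$$
\frac{\binom{2n}{n}}{n^n}n\int_0^\infty t^{n-1}\frac{\mu\bigl(r(K'\cap(te_n+K'))\bigr)}{r^n}\,dt\leq\frac{\bigl(\mu(rS_{e_n}(K')+C_{n-1})/r^n\bigr)^{n+1}}{\bigl(G_{n-1}(rP_{e_n^\perp}(K'))/r^{n-1}\bigr)^n},
$$
since $n(n+1)-n(n-1)=2n$.

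The final step is to let $r\to\infty$. By Lemma \ref{lem:muyGn}, $\mu(rS_{e_n}(K')+C_{n-1})/r^n\to\vol_n(S_{e_n}(K'))=\vol_n(K')$, and by \eqref{eq:ApproachingVolByDilationsWithGM=0}, $G_{n-1}(rP_{e_n^\perp}(K'))/r^{n-1}\to\vol_{n-1}(P_{e_n^\perp}(K'))$, so the right-hand side converges to $\vol_n(K')^{n+1}/\vol_{n-1}(P_{e_n^\perp}(K'))^n$. For the left-hand side, the integrand is supported on $[0,\rho_{K'-K'}(e_n)]$ by \eqref{eq:RadialFunctionDifferenceBody}, and on this compact interval the pointwise limit $\mu(r(K'\cap(te_n+K')))/r^n\to\vol_n(K'\cap(te_n+K'))$ holds by Lemma \ref{lem:muyGn}, with the uniform bound $\mu(r(K'\cap(te_n+K')))/r^n\leq\mu(rK')/r^n$, which is bounded as $r\to\infty$. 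Dominated convergence then yields the desired inequality for $K'$, and hence for $K$ by translation invariance.

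The one technical point requiring care is the uniform bound used in dominated convergence: this reduces to the fact that $\mu(rA)/r^n$ is uniformly bounded in $r\geq 1$ for any bounded convex set $A$, which follows directly from the inequality $\mu(rA)\leq G_n(rA)+G_{n-1}(P_{e_n^\perp}(rA))$ of Lemma \ref{lem:muyGn} and the standard polynomial bound on the lattice point enumerator of a dilated convex body. Beyond this, the argument is a straightforward scaling/limit passage.
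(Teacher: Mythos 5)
Your proof is correct and follows essentially the same route as the paper: apply Theorem \ref{thm:DiscreteZhangPreIntegration} to the dilates of a suitable translate, rescale by $r^{2n}$, and pass to the limit using Lemma \ref{lem:muyGn}, the lattice-point limits of Remark \ref{rmk:ApproachingVolByDilationsWithGApproachingIntegralWithG}, and dominated convergence on the compact support $[0,\rho_{K'-K'}(e_n)]$. The only (harmless) difference is that you keep the measure $\mu$ throughout and invoke the limit statement of Lemma \ref{lem:muyGn} directly, whereas the paper first replaces $\mu$ by $G_n\pm G_{n-1}$ and disposes of the $G_{n-1}$ correction term as a separate claim.
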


\begin{proof}
Let $K\subseteq\R^n$ be a convex body. Since the inequality we want to prove is invariant by translations, we can assume without loss of generality that \\$\displaystyle{\max_{y\in P_{e_n^\perp}(K)}=\vol_1(K\cap(y+\langle e_n\rangle))=\vol_1(K\cap\langle e_n\rangle)}$. Then, for any $\lambda>0$ we have that
$$
\max_{y\in P_{e_n^\perp}(\lambda K)}\vol_1(\lambda K\cap(y+\langle e_n\rangle))=\vol_1(\lambda K\cap\langle e_n\rangle).
$$
Thus, by Theorem \ref{thm:DiscreteZhangPreIntegration} and Lemma \ref{lem:MuSteiner}, for any $\lambda>0$,
\begin{eqnarray*}
\frac{{{2n}\choose{n}}}{n^n}\int_0^\infty nr^{n-1}\mu(\lambda K\cap(re_n+\lambda K))dr&\leq&\frac{(\mu(S_{e_n}(\lambda K)+C_{n-1}))^{n+1}}{(G_{n-1}(P_{e_n^\perp}(\lambda K)))^n}\cr
&\leq&\frac{\mu(\lambda K+C_{n-1})^{n+1}}{G_{n-1}(P_{e_n^\perp}(\lambda K))^n}.
\end{eqnarray*}
By Lemma \ref{lem:muyGn} we have that for every $r>0$
$$
\mu(\lambda K\cap(re_n+\lambda K))\geq G_n(\lambda K\cap(re_n+\lambda K))-G_{n-1}(P_{e_n^\perp}(\lambda K\cap(re_n+\lambda K)))
$$
and that
$$
\mu(\lambda K+C_{n-1})\leq G_n(\lambda K+C_{n-1})+G_{n-1}(\lambda P_{e_n^\perp}(K)+C_{n-1}).
$$
Then, we obtain that
\begin{eqnarray*}
&&\frac{{{2n}\choose{n}}}{n^n}\int_0^\infty nr^{n-1}(G_n(\lambda K\cap(re_n+\lambda K))-G_{n-1}(P_{e_n^\perp}(\lambda K\cap(re_n+\lambda K))))dr\cr
&\leq&\frac{(G_n(\lambda K+C_{n-1})+G_{n-1}(\lambda P_{e_n^\perp}(K)+C_{n-1}))^{n+1}}{G_{n-1}(\lambda P_{e_n^\perp}(K))^n}.
\end{eqnarray*}
Therefore, dividing the latter inequality by $\lambda^{2n}=\frac{\lambda^{n(n+1)}}{\lambda^{n(n-1)}}$, we obtain that for any $\lambda>0$, the quantity
\begin{equation}\label{eq:InequaliProposition41Left}
\frac{{{2n}\choose{n}}}{n^n}\frac{1}{\lambda^{2n}}\int_0^\infty nr^{n-1}(G_n(\lambda K\cap(re_n+\lambda K))-G_{n-1}(P_{e_n^\perp}(\lambda K\cap(re_n+\lambda K))))dr
\end{equation}
is bounded above by
$$
\frac{(G_n(\lambda K+C_{n-1})+G_{n-1}(\lambda P_{e_n^\perp}(K)+C_{n-1}))^{n+1}}{\lambda^{2n}G_{n-1}(\lambda P_{e_n^\perp}(K))^n},
$$
which equals
\begin{equation}\label{eq:InequaliProposition41}
\frac{\left(\frac{G_n(\lambda K+C_{n-1})}{\lambda^n}+\frac{G_{n-1}(\lambda P_{e_n^\perp}(K)+C_{n-1})}{\lambda^n}\right)^{n+1}}{\left(\frac{G_{n-1}(\lambda P_{e_n^\perp}(K))}{\lambda^{n-1}}\right)^n}.
\end{equation}

On the one hand, we are going to prove that
\begin{eqnarray}\label{eq:Claim1}
&&\lim_{\lambda\to\infty}\frac{1}{\lambda^{2n}}\int_0^\infty nr^{n-1}G_n(\lambda K\cap(re_n+\lambda K))dr\cr
&=&\frac{{{2n}\choose{n}}}{n^n}\int_0^\infty nr^{n-1}\vol_n( K\cap(re_n+ K))dr
\end{eqnarray}
and that
\begin{equation}\label{eq:Claim2}
\lim_{\lambda\to\infty}\frac{1}{\lambda^{2n}}\int_0^\infty nr^{n-1}G_{n-1}(P_{e_n^\perp}(\lambda K\cap(re_n+\lambda K)))dr=0.
\end{equation}
As a consequence, we will obtain that \eqref{eq:InequaliProposition41Left} converges, as $\lambda\to\infty$, to
$$
\frac{{{2n}\choose{n}}}{n^n}\int_0^\infty nr^{n-1}\vol_n( K\cap(re_n+ K))dr.
$$
On the other hand, we have that, by \eqref{eq:ApproachingVolByDilationsWithGM=0},
$$
\lim_{\lambda\to\infty}\frac{G_{n-1}(\lambda P_{e_n^\perp}(K))}{\lambda^{n-1}}=\vol_{n-1}(P_{e_n^\perp}(K))
$$
and, by \eqref{eq:ApproachingVolByDilationsWithG} (with $M=C_{n-1}$)
\begin{eqnarray*}
&&\lim_{\lambda\to\infty}\left(\frac{G_n(\lambda K+C_{n-1})}{\lambda^n}+\frac{G_{n-1}(\lambda P_{e_n^\perp}(K)+C_{n-1})}{\lambda^n}\right)\cr
&=&\vol_n(K)+0\cdot\vol_{n-1}(P_{e_n^\perp}(K))=\vol_n(K).
\end{eqnarray*}
Therefore, \eqref{eq:InequaliProposition41} converges to $\frac{\vol_n(K)^{n+1}}{\vol_{n-1}(P_{e_n^\perp}(K))^n}$ as $\lambda\to\infty$. As a consequence, we will obtain
$$
\frac{{{2n}\choose{n}}}{n^n}\int_0^\infty nr^{n-1}\vol_n( K\cap(re_n+ K))dr\leq\frac{\vol_n(K)^{n+1}}{\vol_{n-1}(P_{e_n^\perp}(K))^n}.
$$

Let us prove \eqref{eq:Claim1}: By \eqref{eq:RadialFunctionDifferenceBody}, $\lambda K\cap(re_n+\lambda K)=\emptyset$ for every $r>\rho_{\lambda(K-K)}(e_n)$. Therefore, changing variables $r=\lambda s$, for any $\lambda>0$ we have
\begin{eqnarray*}
&&\frac{1}{\lambda^{2n}}\int_0^\infty nr^{n-1}G_n(\lambda K\cap(re_n+\lambda K))dr=\cr
&=&\frac{1}{\lambda^{2n}}\int_0^{\rho_{\lambda(K-K)}(e_n)} nr^{n-1}G_n\left(\lambda\left( K\cap\left(\frac{r}{\lambda} e_n+ K\right)\right)\right)dr\cr
&=&\int_0^{\rho_{K-K}(e_n)} ns^{n-1}\frac{G_n(\lambda(K\cap(s e_n+ K)))}{\lambda^n}ds.\cr
\end{eqnarray*}
For any $s\in[0,\rho_{K-K}(e_n)]$ we have by \eqref{eq:ApproachingVolByDilationsWithGM=0} that
$$
\lim_{\lambda\to\infty}\frac{G_n(\lambda(K\cap(s e_n+ K)))}{\lambda^n}=\vol_n(K\cap (se_n+K)).
$$
Also by \eqref{eq:ApproachingVolByDilationsWithGM=0}
$$
\lim_{\lambda\to\infty}\frac{G_n(\lambda K)}{\lambda^n}=\vol_n(K).
$$
Thus, given $\varepsilon_0>0$ there exists $\lambda_0>0$ such that if $\lambda>\lambda_0$
$$
ns^{n-1}\frac{G_n(\lambda(K\cap(s e_n+ K)))}{\lambda^n}\leq ns^{n-1}\frac{G_n(\lambda K)}{\lambda^n}\leq ns^{n-1}(\vol_n(K)+\varepsilon_0).
$$
The function $f(s)=ns^{n-1}(\vol_n(K)+\varepsilon_0)$ is integrable in $[0,\rho_{K-K}(e_n)]$. Thus, by the dominated convergence theorem and using again that, by \eqref{eq:RadialFunctionDifferenceBody}, $K\cap(se_n+ K)=\emptyset$ if $s>\rho_{K-K}(e_n)$, we obtain
\begin{eqnarray*}
&&\lim_{\lambda\to\infty}\frac{1}{\lambda^{2n}}\int_0^\infty nr^{n-1}G_n(\lambda K\cap(re_n+\lambda K))dr\cr
&=&\int_0^{\rho_{K-K}(e_n)}ns^{n-1}\vol_n(K\cap(se_n+ K))ds\cr
&=&\int_0^{\infty}ns^{n-1}\vol_n(K\cap(se_n+ K))ds,\cr
\end{eqnarray*}
which, renaming $s$ as $r$, proves \eqref{eq:Claim1}.

Let us now prove \eqref{eq:Claim2}: Again, by \eqref{eq:RadialFunctionDifferenceBody}, we have that $\lambda K\cap(re_n+\lambda K)=\emptyset$ if $r>\rho_{\lambda(K-K)}(e_n)$ and then, for any $\lambda>0$ we have
\begin{eqnarray*}
0&\leq&\frac{1}{\lambda^{2n}}\int_0^\infty nr^{n-1}G_{n-1}(P_{e_n^\perp}(\lambda K\cap(re_n+\lambda K)))dr\cr
&=&\frac{1}{\lambda^{2n}}\int_0^{\rho_{\lambda(K-K)}(e_n)} nr^{n-1}G_{n-1}(P_{e_n^\perp}(\lambda K\cap(re_n+\lambda K)))dr\cr
&\leq& \frac{G_{n-1}(\lambda P_{e_n^\perp}(K))}{\lambda^{2n}}\int_0^{\rho_{\lambda(K-K)}(e_n)}nr^{n-1}dr=\frac{G_{n-1}(\lambda P_{e_n^\perp}(K))\rho_{\lambda(K-K)}^n(e_n)}{\lambda^{2n}}\cr
&=&\frac{1}{\lambda}\frac{G_{n-1}(\lambda P_{e_n^\perp}(K))}{\lambda^{n-1}}\rho_{K-K}^n(e_n).
\end{eqnarray*}
Since by \eqref{eq:ApproachingVolByDilationsWithGM=0}
$$
\lim_{\lambda\to\infty}\frac{1}{\lambda}\frac{G_{n-1}(\lambda P_{e_n^\perp}(K))}{\lambda^{n-1}}\rho_{K-K}^n(e_n)=0\cdot\vol_{n-1}(P_{e_n^\perp}(K))\rho_{K-K}^n(e_n)=0,
$$
we have that
$$
\lim_{\lambda\to\infty}\frac{1}{\lambda^{2n}}\int_0^\infty nr^{n-1}G_{n-1}(P_{e_n^\perp}(\lambda K\cap(re_n+\lambda K)))dr=0.
$$
Now that we have proved \eqref{eq:Claim1} and \eqref{eq:Claim2}, the proof is complete.
\end{proof}

\section{A different discrete approach to Zhang's inequality}\label{sec:AnotherDiscreteApproach}

In this section we are going to prove Theorem \ref{thm:PurelyDiscreteZhang}. This gives a discrete version of Theorem \ref{thm:ZhangPreIntegration2}. Let us recall that, as mentioned in Remark \ref{rmk:Equivalence}, Theorem \ref{thm:ZhangPreIntegration2} is equivalent to Theorem \ref{thm:ZhangPreIntegration}. The proof will follow the lines of the proof of Theorem \ref{thm:BerwaldDiscrete}, which was given in \cite{ALY}. For any convex body $K\subseteq\R^n$ such that $P_{e_n^\perp}(K)\cap\Z^n\neq\emptyset$, the role of the function $f$ in Theorem \ref{thm:BerwaldDiscrete} will be played by the function $f_1:P_{e_n^\perp}(K)\to[0,\infty)$ given by $f_1(y)=\frac{1}{2}\vol_1(K\cap(y+\langle e_n\rangle))$, whose hypograph will be $S_{e_n}(K)\cap\{x\in\R^n\,:\,\langle x,e_n\rangle\geq0\}$. Therefore, the hypograph of the function $f_1^\diamond$, defined as in \eqref{eq:DefinitionDiamond}, is the closure of $(S_{e_n}(K)+C_{n-1})\cap\{x\in\R^n\,:\,\langle x,e_n\rangle\geq0\}$. The role of the binomial coefficient in Theorem \ref{thm:BerwaldDiscrete} will now be played by the inverse of the number, defined in \eqref{eq:DefinitionBm(p)} as
$$
B_{m}(p)=\sum_{k=0}^{\lfloor m\rfloor}\frac{p}{m}\left(1-\frac{k}{m}\right)^{n-1}\left(\frac{k}{m}\right)^{p-1},
$$
for a certain $m=m_0(p)>1$, whose existence (depending on a fixed parameter $p\geq1$) needs to be proved. Once the existence of such $m_0(p)$ is proved, we will be able to construct an appropriate $\frac{1}{n-1}$-affine function (i.e., a function $g_p$ such that $g_p^\frac{1}{n-1}$ is affine on its support), given by
\begin{equation}\label{eq:Definitiong_p}
g_p(x)=\begin{cases}
\left(1-\frac{x}{m_0(p)}\right)^{n-1}G_{n-1}(P_{e_n^\perp}(K)) &\textrm{ if }0\leq x\leq m_0(p)\cr
0&\textrm{ if }x>m_0(p).\cr
\end{cases}
\end{equation}
This function $g_p$ which will have a crossing point $r_0(p)$ (see Lemma \ref{lem:existenceDiscreter0} below for the precise definition of such crossing point) with the function  $\tilde{f}$ given by
\begin{equation}\label{eq:DefinitionfTilde}
\tilde{f}(r)=G_{n-1}((S_{e_n}(K)+C_{n-1})\cap\{x\in\R^n\,:\,\langle x,e_n\rangle=r\}),\quad r\geq0.
\end{equation}
which is a modification of the function given by
\begin{equation}\label{eq:DefinitionfSinTilde}
f(r)=G_{n-1}(S_{e_n}(K)\cap\{x\in\R^n\,:\,\langle x,e_n\rangle=r\}),\quad r\geq0.
\end{equation}
The function $f$ counts, for every $r\geq0$, the number of integer points in the projection onto $\{x\in\R^n\,:\,\langle x,e_n\rangle=0\}$ of the intersection of $S_{e_n}(K)$ with the hyperplane $\{x\in\R^n\,:\,\langle x,e_n\rangle=r\}$.

Besides, such function $g_p$ will satisfy that, the value of $\displaystyle{\left(\frac{B_{m}(q)^{-1}}{G_{n-1}(P_{e_n^\perp}(K))}\sum_{k=0}^\infty qk^{q-1}g_p(k)\right)^\frac{1}{q}}$ will be independent of $q$ and will depend only on the fixed parameter $p$. For every $q\geq1$  we will have (See equations \eqref{eq:Sumg_p} and \eqref{eq:m_0(p)} below and Lemma \ref{lem:existenceDiscreter0} for the precise definition of the function $g_p$)
$$
\left(\frac{B_{m}(q)^{-1}}{G_{n-1}(P_{e_n^\perp}(K))}\sum_{k=0}^\infty qk^{q-1}g_p(k)\right)^\frac{1}{q}=\left(\frac{B_{m}(p)^{-1}}{G_{n-1}(P_{e_n^\perp}(K))}\sum_{k=0}^\infty pk^{q-1}\tilde{f}(k)\right)^\frac{1}{q},
$$
Making use of the fact that $g_p$ will have a crossing point with $\tilde{f}$, we will be able to prove that
$$
\left(\frac{B_{m}(q)^{-1}}{G_{n-1}(P_{e_n^\perp}(K))}\sum_{k=0}^\infty qk^{q-1}g_p(k)\right)^\frac{1}{q}\geq\left(\frac{B_{m}(q)^{-1}}{G_{n-1}(P_{e_n^\perp}(K))}\sum_{k=0}^\infty qk^{q-1}f(k)\right)^\frac{1}{q},
$$
obtaining in this way the proof of Theorem \ref{thm:PurelyDiscreteZhang}.

Let us point out that, from the definition of $B_{m}(p)$, we have that if $p>1$, then $B_m(p)=0$ for every $m\in(0,1)$ and if $p=1$, then $B_m(1)=\frac{1}{m}$ for every $m\in(0,1)$, since we convene that $\left(\frac{0}{m}\right)^{1-1}=1$. For any $p\geq 1$ and $m>1$ we have that $B_m(p)>0$.

Notice that Berwald's inequality, \eqref{eq:Berwald'sInequality}, applied in $e_n^\perp$ identified  with $\R^{n-1}$ to a concave function $h$ defined on $P_{e_n^\perp}K$, shows that the quantity $$\left(\frac{{{n-1+p}\choose{n-1}}}{\vol_{n-1}(P_{e_n^\perp}(K))}\int_{P_{e_n^\perp}(K)}h^p(x)dx\right)^\frac{1}{p}$$ is non-increasing in $p\in(-1,\infty)$.

The following lemma shows that, as $m$ tends to $\infty$, the value of $B_{m}(p)$ converges to the inverse of ${{n-1+p}\choose{n-1}}$, which is the binomial coefficient in this quantity.
\begin{lemma}\label{lem:limit}
For any $n\geq2$ and any $p\geq1$ we have that
$$
\lim_{x\to\infty} B_{x}(p)=\lim_{x\to\infty}\sum_{k=0}^{\lfloor x\rfloor}\frac{p}{x}\left(1-\frac{k}{x}\right)^{n-1}\left(\frac{k}{x}\right)^{p-1}={{n-1+p}\choose{n-1}}^{-1}.
$$
\end{lemma}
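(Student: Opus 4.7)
My plan is to recognize $B_x(p)$ as a Riemann sum for a Beta-type integral and pass to the limit. Set $m=\lfloor x\rfloor$ and let $\varphi:[0,1]\to\R$ be given by $\varphi(t)=p(1-t)^{n-1}t^{p-1}$, with the convention $0^0=1$. Since $p\geq 1$ means $p-1\geq 0$, the function $\varphi$ will be continuous on $[0,1]$ (for $p=1$ it reduces to $(1-t)^{n-1}$, and for $p>1$ we have $t^{p-1}\to 0$ as $t\to 0^+$), hence Riemann integrable and bounded, say by $p$. With this notation,
$$B_x(p)=\sum_{k=0}^{m}\frac{1}{x}\,\varphi\!\left(\frac{k}{x}\right).$$

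The main step is to observe that the partial sum over $k=0,\dots,m-1$ is precisely the left Riemann sum of $\varphi$ on $[0,m/x]$ associated with the uniform partition of mesh $1/x$. As $x\to\infty$, the right endpoint $m/x=1-\{x\}/x$ converges to $1$ and the mesh $1/x$ tends to $0$, so the standard convergence of Riemann sums for Riemann integrable functions together with the estimate $\bigl|\int_0^1\varphi-\int_0^{m/x}\varphi\bigr|\leq \|\varphi\|_\infty(1-m/x)\to 0$ will yield
$$\lim_{x\to\infty}\sum_{k=0}^{m-1}\frac{1}{x}\,\varphi\!\left(\frac{k}{x}\right)=\int_0^1\varphi(t)\,dt.$$
The leftover boundary term $\varphi(m/x)/x$ is at most $\|\varphi\|_\infty/x$, so it vanishes in the limit.

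Finally, I would evaluate the integral using the Beta--Gamma identity:
$$\int_0^1 p(1-t)^{n-1}t^{p-1}\,dt=p\cdot\frac{\Gamma(p)\Gamma(n)}{\Gamma(p+n)}=\frac{\Gamma(p+1)\Gamma(n)}{\Gamma(n+p)}={{n-1+p}\choose{n-1}}^{-1},$$
where the last equality is the definition of the generalized binomial coefficient via the Gamma function. I do not anticipate any real obstacle; the only points requiring care are the use of $0^0=1$ to give $\varphi$ a well-defined continuous value at $0$ when $p=1$, and the minor mismatch between the endpoint $m/x$ of the natural partition and $1$, both of which are dispatched by the continuity and boundedness of $\varphi$ on $[0,1]$.
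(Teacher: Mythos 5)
Your argument is correct and follows essentially the same route as the paper: both recognize $B_x(p)$ as a Riemann sum of $t\mapsto p(1-t)^{n-1}t^{p-1}$ and evaluate the resulting Beta integral. The only cosmetic difference is that the paper first replaces $x$ by $\lfloor x\rfloor$ to get an exact Riemann sum on $[0,1]$ and then corrects by $\lfloor x\rfloor/x\to 1$, whereas you keep the mesh $1/x$ on $[0,\lfloor x\rfloor/x]$ and absorb the endpoint discrepancy via the boundedness of the integrand; both are fine.
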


\begin{proof}
Let $f:[0,1]\to\R$ be the function $f(x)=p(1-x)^{n-1}x^{p-1}$, which is Riemann-integrable in $[0,1]$ with
$$
\int_0^1 f(x)dx={{n-1+p}\choose{n-1}}^{-1}.
$$
Since the norm of the partition of the interval $[0,1]$ given by $\mathcal{P}_x=\left\{0,\frac{1}{\lfloor x\rfloor},\frac{2}{\lfloor x\rfloor},\dots,1\right\}$ tends to $0$ as $x$ tends to $\infty$, and
$$
\sum_{k=0}^{\lfloor x\rfloor}\frac{p}{\lfloor x\rfloor}\left(1-\frac{k}{\lfloor x\rfloor}\right)^{n-1}\left(\frac{k}{\lfloor x\rfloor}\right)^{p-1}=\sum_{k=0}^{\lfloor x\rfloor-1}\frac{p}{\lfloor x\rfloor}\left(1-\frac{k}{\lfloor x\rfloor}\right)^{n-1}\left(\frac{k}{\lfloor x\rfloor}\right)^{p-1}
$$
is a Riemann sum of $f$ associated to $\mathcal{P}_x$, we have that
$$
\lim_{x\to\infty}\sum_{k=0}^{\lfloor x\rfloor}\frac{p}{\lfloor x\rfloor}\left(1-\frac{k}{\lfloor x\rfloor}\right)^{n-1}\left(\frac{k}{\lfloor x\rfloor}\right)^{p-1}={{n-1+p}\choose{n-1}}^{-1}.
$$

Moreover, as $\displaystyle{\lim_{x\to\infty}\frac{\lfloor x\rfloor}{x}=1}$, we obtain that
$$
\lim_{x\to\infty} B_{x}(p)=\lim_{x\to\infty}\sum_{k=0}^{\lfloor x\rfloor}\frac{p}{x}\left(1-\frac{k}{x}\right)^{n-1}\left(\frac{k}{x}\right)^{p-1}={{n-1+p}\choose{n-1}}^{-1}.
$$
\end{proof}

From now on we will consider convex bodies satisfying some hypotheses. We will say that a convex body $K$ satisfies the hypotheses \hypertarget{(H)}{(H)} if it satisfies
\begin{enumerate}
\item[a)] $\displaystyle{\label{Hypotheses1}\max_{y\in P_{e_n^\perp}(K)\cap\Z^n}G_1(S_{e_n}(K)\cap(y+\langle e_n\rangle))=G_1(S_{e_n}(K)\cap\langle e_n\rangle)}$,
\item[b)] $\displaystyle{\label{Hypotheses2}M:=\max_{x\in S_{e_n}(K)\cap\Z^n}\langle x,e_n\rangle\geq 1}$.
\end{enumerate}

In the following lemma we prove under the hypotheses \hyperlink{(H)}{(H)}, for any $p\geq1$, the existence of the number $m_0(p)$ that we will need in order to construct the  $\frac{1}{n-1}$-affine defined in \eqref{eq:Definitiong_p}, which will have one crossing point with the function $\tilde{f}$ defined in \eqref{eq:DefinitionfTilde}. Before stating the lemma, let us make the following remark regarding the supports of the functions $f$ and $\tilde{f}$:

\begin{rmk}\label{rmk:Supports}
Let us point out that, since $S_{e_n}(K)\subseteq S_{e_n}(K)+C_{n-1}$ we have that $f(r)\leq\tilde{f}(r)$ for every $r\in[0,\infty)$ and then
$$
\textrm{supp}(f)\subseteq\textrm{supp}(\tilde{f}).
$$

Notice also that, from the definition of $M$, there exists $y\in P_{e_n^\perp}(K)\cap\Z^n$ such that $y+Me_n\in S_{e_n}(K)\cap\Z^n$. Besides, $S_{e_n}(K)\cap\Z^n\cap\{x\in\R^n\,:\,\langle x,e_n\rangle=M+1\}=\emptyset$. Therefore, since $M+1\in\N$, we have that $f(M+1)=0$ and $[0,M]\subseteq\textrm{supp}(f)\subseteq[0, M+1)$. Nevertheless, even though $f(M+1)=0$, it is possible for any integer $k>M$ that $\tilde{f}(k)>0$, as the example given by $K=S_{e_n}(K)=\textrm{conv}\left\{\left(0,\pm1\right),\left(\frac{1}{2},\pm k\right), (1,\pm 1)\right\}\subseteq\R^2$ shows.

Notice also that if $r>\frac{1}{2}\max_{y\in P_{e_n^\perp}}\vol_1(K\cap(y+\langle e_n\rangle))$, since $C_{n-1}\subseteq e_n^\perp$, we have
$$
S_{e_n}(K)\cap\{x\in\R^n\,:\,\langle x,e_n\rangle=r\}=\emptyset
$$
and
$$
(S_{e_n}(K)+C_{n-1})\cap\{x\in\R^n\,:\,\langle x,e_n\rangle=r\}=\emptyset,
$$
and then $f(r)=\tilde{f}(r)=0$. Therefore,
$$
\textrm{supp}(f)\subseteq\textrm{supp}(\tilde{f})\subseteq\left[0,\frac{1}{2}\max_{y\in P_{e_n^\perp}}\vol_1(K\cap(y+\langle e_n\rangle))\right].
$$

\end{rmk}

\begin{lemma}\label{lem:Existencem0}
Let $K\subseteq\R^n$ be a convex body with $0\in P_{e_n^\perp}(K)$ satisfying the hypotheses \hyperlink{(H)}{\textrm{(H)}}:
\begin{enumerate}
\item[a)] $\displaystyle{\max_{y\in P_{e_n^\perp}(K)\cap\Z^n}G_1(S_{e_n}(K)\cap(y+\langle e_n\rangle))=G_1(S_{e_n}(K)\cap\langle e_n\rangle)}$,
\item[b)] $\displaystyle{M:=\max_{x\in S_{e_n}(K)\cap\Z^n}\langle x,e_n\rangle}\geq 1$.
\end{enumerate}
Then, for any $p\geq1$, there exists $m_0(p)\geq M$ such that $m_0(p)>1$ and
$$
m_0(p)^pB_{m_0(p)}(p)=\frac{1}{G_{n-1}(P_{e_n^\perp}(K))}\sum_{k=0}^\infty pk^{p-1}\tilde{f}(k),
$$
where $\tilde{f}:[0,\infty)\to\N\cup\{0\}$ is given by
$$
\tilde{f}(r)=G_{n-1}((S_{e_n}(K)+C_{n-1})\cap\{x\in\R^n\,:\,\langle x,e_n\rangle=r\}).
$$
\end{lemma}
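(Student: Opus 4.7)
My plan is to recast the desired identity as $\varphi(m_0)=T$, where
$$\varphi(m):=m^pB_m(p)=\sum_{k=0}^{\lfloor m\rfloor}\frac{p(m-k)^{n-1}k^{p-1}}{m^{n-1}}\quad\textrm{and}\quad T:=\frac{1}{G_{n-1}(P_{e_n^\perp}(K))}\sum_{k=0}^\infty pk^{p-1}\tilde{f}(k),$$
and then produce $m_0$ by the intermediate value theorem on $[M,\infty)$. First I would check that $\varphi$ is continuous on $[1,\infty)$: the only potential jumps occur when $\lfloor m\rfloor$ increases at an integer, but the newly introduced summand vanishes there, so these jumps are removable. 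Then Lemma~\ref{lem:limit} gives $B_m(p)\to{{n-1+p}\choose{n-1}}^{-1}$, so $\varphi(m)\to\infty$ as $m\to\infty$, and $T$ is finite because $\tilde{f}$ has bounded support (Remark~\ref{rmk:Supports}).

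The key step is to prove $\varphi(M)\le T$. Since $M\in\Z_{\geq 1}$ by hypothesis~(b), I would pick $y_M\in\Z^{n-1}$ with $y_M+Me_n\in S_{e_n}(K)$. Using convexity of $S_{e_n}(K)$ together with the fact that its central slice $S_{e_n}(K)\cap e_n^\perp$ equals $P_{e_n^\perp}(K)$, I would derive, upon identifying the slice $\{x\in\R^n:\langle x,e_n\rangle=k\}$ with $\R^{n-1}$, the inclusion
$$(1-k/M)P_{e_n^\perp}(K)+(k/M)y_M\subseteq S_{e_n}(K)\cap\{x\in\R^n:\langle x,e_n\rangle=k\}$$
for every integer $k\in\{0,\dots,M\}$. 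Adding $C_{n-1}\subseteq e_n^\perp$ to both sides and applying Theorem~\ref{thm: BM_lattice_point_no_G(K)G(L)>0} in $\R^{n-1}$ with $\lambda=k/M$, together with $G_{n-1}(\{y_M\})=1$, gives $\tilde{f}(k)^{1/(n-1)}\ge(1-k/M)G_{n-1}(P_{e_n^\perp}(K))^{1/(n-1)}+k/M$. Dropping the nonnegative $k/M$ and raising to the $(n-1)$-th power yields the pointwise bound $\tilde{f}(k)\ge(1-k/M)^{n-1}G_{n-1}(P_{e_n^\perp}(K))=g_p|_{m_0=M}(k)$ for $k=0,\dots,M$. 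Multiplying by $pk^{p-1}$, summing and observing that $\sum_{k=0}^M pk^{p-1}(1-k/M)^{n-1}=M^pB_M(p)$, we conclude $G_{n-1}(P_{e_n^\perp}(K))\varphi(M)\le\sum_{k=0}^\infty pk^{p-1}\tilde{f}(k)$, i.e. $\varphi(M)\le T$.

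With continuity, the growth $\varphi\to\infty$, and $\varphi(M)\le T$, the intermediate value theorem produces $m_0\in[M,\infty)$ with $\varphi(m_0)=T$. If $M\ge 2$ we have $m_0>1$ automatically. In the remaining case $M=1$, direct computation gives $\varphi(1)=0$ if $p>1$ and $\varphi(1)=1$ if $p=1$; in both cases, the existence of $y_1$ forces $\tilde{f}(1)\ge 1$, and combined with $\tilde{f}(0)=G_{n-1}(P_{e_n^\perp}(K)+C_{n-1})\ge G_{n-1}(P_{e_n^\perp}(K))$, this yields $T>\varphi(1)$, so continuity provides an $m_0>1$. The main obstacle is the inequality $\varphi(M)\le T$, where the geometric content enters: convexity of the slices of $S_{e_n}(K)$ coupled with the discrete Brunn-Minkowski inequality is exactly what is needed to translate the extremal point $y_M+Me_n$ into the pointwise comparison $\tilde{f}(k)\ge g_p|_{m_0=M}(k)$; the remainder is soft topology.
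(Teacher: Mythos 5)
Your proposal is correct and is built from the same ingredients as the paper's proof: the continuity of $h_p(x)=x^pB_x(p)$ on $(0,\infty)$, the divergence $h_p(x)\to\infty$ via Lemma \ref{lem:limit}, the inclusion of the height-$k$ slices of the convex hull of $P_{e_n^\perp}(K)\times\{0\}$ and a lattice point at height $M$ inside $S_{e_n}(K)$, and the discrete Brunn--Minkowski inequality (Theorem \ref{thm: BM_lattice_point_no_G(K)G(L)>0}) after adding $C_{n-1}$. The organization, however, is genuinely different in two respects. First, you establish the comparison at $M$ directly, namely $\varphi(M)\le T$, and then run the intermediate value theorem on $[M,\infty)$ (treating $M=1$ separately to secure $m_0>1$), whereas the paper first produces some $m_0(p)>1$ from the values of $h_p$ on $(0,1]$ and only afterwards shows $m_0(p)\ge M$ by contradiction, deriving $m_0(p)^p>m_0(p)^p$ from the assumption $m_0(p)<M$; your direct route is cleaner and avoids the strict-inequality bookkeeping ($\tilde f(M)\ge 1$, $\lfloor m_0(p)\rfloor<M$) that the contradiction needs. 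Second, you place the apex of the convex hull at an arbitrary maximizing lattice point $y_M+Me_n$ rather than at $Me_n$; since $G_{n-1}(\{y_M\})=1$ for any $y_M\in\Z^{n-1}$, the discrete Brunn--Minkowski step is unaffected, and as a byproduct hypothesis a) is not needed for this lemma (the paper invokes a) precisely to guarantee $Me_n\in S_{e_n}(K)$). The only cosmetic gaps are the endpoint cases $k=0$ and $k=M$ of the Brunn--Minkowski application, where $\lambda\notin(0,1)$, but there the bound $\tilde f(k)\ge(1-k/M)^{n-1}G_{n-1}(P_{e_n^\perp}(K))$ is trivial.
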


\begin{proof}
Let $K\subseteq\R^n$ be a convex body with $0\in P_{e_n^\perp}(K)$ satisfying the hypotheses \hyperlink{(H)}{(H)} and let  $f:[0,\infty)\to\R$ be the function given by
$$
f(r)=G_{n-1}(S_{e_n}(K)\cap\{x\in\R^n\,:\,\langle x,e_n\rangle=r\}).
$$
By Remark \ref{rmk:Supports}, $f$ satisfies that $f(r)\leq \tilde{f}(r)$ for every $r\geq0$. Besides, since $M\geq 1$, we have that $f(1)\geq 1$. For any $p\geq1$, let also $h_p:(0,\infty)\to[0,\infty)$ be the function given by
\begin{equation}\label{eq:Identityh_p}
h_p(x)=x^pB_x(p)=\sum_{k=0}^{\lfloor x\rfloor}p\left(1-\frac{k}{x}\right)^{n-1}k^{p-1}.
\end{equation}
Our purpose is to prove the existence of some $m_0(p)\geq M$ such that $m_0(p)>1$ and
$$
h_p(m_0(p))=\frac{1}{G_{n-1}(P_{e_n^\perp}(K))}\sum_{k=0}^\infty pk^{p-1}\tilde{f}(k).
$$
Notice that for any $p\geq 1$, $h_p$ is clearly continuous at any $x_0\in(0,\infty)\setminus\N$. If $x_0=k_0\in\N$ then
\begin{eqnarray*}
\lim_{x\to x_0^-}h_p(x)&=&\lim_{x\to k_0^-}\sum_{k=0}^{k_0-1}p\left(1-\frac{k}{x}\right)^{n-1}k^{p-1}=\sum_{k=0}^{k_0-1}p\left(1-\frac{k}{k_0}\right)^{n-1}k^{p-1}\cr
&=&\sum_{k=0}^{k_0}p\left(1-\frac{k}{k_0}\right)^{n-1}k^{p-1}=h_p(k_0)\cr
\end{eqnarray*}
and
\begin{eqnarray*}
\lim_{x\to x_0^+}h_p(x)&=&\lim_{x\to k_0^+}\sum_{k=0}^{k_0}p\left(1-\frac{k}{x}\right)^{n-1}k^{p-1}=\sum_{k=0}^{k_0}p\left(1-\frac{k}{k_0}\right)^{n-1}k^{p-1}\cr
&=&h_p(k_0).\cr
\end{eqnarray*}
Thus, for any $p\geq 1$, $h_p$ is continuous at $x_0=k_0$ and then $h_p$ is continuous on $(0,\infty)$. Besides, since for every  $p\geq1$ the function $h_p$ is defined as $h_p(x)=x^pB_x(p)$ for every $x\in(0,\infty)$, we have that for any $p\geq 1$ Lemma \ref{lem:limit} implies that
\begin{equation}\label{eq:limh_pInfty}
\lim_{x\to\infty}h_p(x)=\infty.
\end{equation}

Let us now distinguish two cases, depending on whether $p>1$ or $p=1$: Assume first that $p>1$. Since $B_x(p)=0$ for every $x\in(0,1]$,  we have that for every $x\in(0,1]$
\begin{equation}\label{eq:limh_p0}
h_p(x)=0.
\end{equation}

Since, by definition, $M$ is a non-negative integer and, by b), we are assuming that $M\geq1$, we have that $\tilde{f}(1)\geq f(1)\geq1>0$ and then
$$
\frac{1}{G_{n-1}(P_{e_n^\perp}(K))}\sum_{k=0}^{\infty}pk^{p-1}\tilde{f}(k)\geq\frac{p\tilde{f}(1)}{G_{n-1}(P_{e_n^\perp}(K))}>0.
$$
By the continuity of $h_p$ on $(0,\infty)$, \eqref{eq:limh_p0} and \eqref{eq:limh_pInfty}, there exists $m_0(p)>1$ such that
$$
h_p(m_0(p))=\frac{1}{G_{n-1}(P_{e_n^\perp}(K))}\sum_{k=0}^{\infty}pk^{p-1}\tilde{f}(k)>0.
$$

Let us assume now that $p=1$. Then $B_x(1)=\frac{1}{x}$ for every $x\in (0,1]$ and, since for every $x\in(0,\infty)$ the function $h_1$ is defined as $h_1(x)=xB_x(1)$, we have that for every $x\in (0,1]$
\begin{equation}\label{eq:limh_10}
h_1(x)=xB_x(1)=1.
\end{equation}

Since by b) we are assuming that $M\geq1$,
$$
\frac{1}{G_{n-1}(P_{e_n^\perp}(K))}\sum_{k=0}^{\infty}pk^{p-1}\tilde{f}(k)=\frac{1}{G_{n-1}(P_{e_n^\perp}(K))}\sum_{k=0}^{\infty}\tilde{f}(k)\geq\frac{\tilde{f}(0)+\tilde{f}(1)}{G_{n-1}(P_{e_n^\perp}(K))}>1,
$$
where we have used that  $\tilde{f}(1)\geq1$ and that, from the definition of $\tilde{f}$,
$$
\tilde{f}(0)=G_{n-1}(P_{e_n^\perp}(K)+C_{n-1})\geq G_{n-1}(P_{e_n^\perp}(K)).
$$
By the continuity of $h_1$ on $(0,\infty)$, \eqref{eq:limh_10} and \eqref{eq:limh_pInfty}, there exists $m_0(1)>1$ such that
$$
h_1(m_0(1))=\frac{1}{G_{n-1}(P_{e_n^\perp}(K))}\sum_{k=0}^{\infty}pk^{p-1}\tilde{f}(k)>0.
$$

In both cases, since $m_0(p)>1$, such $m_0(p)$ satisfies that $B_{m_0(p)}(p)>0$ and
$$
m_0(p)^p=\frac{\sum_{k=0}^{\infty}pk^{p-1}\tilde{f}(k)}{G_{n-1}(P_{e_n^\perp}(K))B_{m_0(p)}(p)}.%=\frac{\sum_{k=0}^{M}pk^{p-1}\tilde{f}(k)}{G_{n-1}(P_{e_n^\perp}(K))B_{m_0(p)}(p)},
$$

Let us now see that $m_0(p)\geq M$. Assume that $m_0(p)<M$. By a) we are assuming
$$
\max_{y\in P_{e_n^\perp}(K)\cap\Z^n}G_1(S_{e_n}(K)\cap(y+\langle e_n\rangle))=G_1(S_{e_n}(K)\cap\langle e_n\rangle).
$$
Thus, from the definition of $\displaystyle{M=\max_{x\in S_{e_n}(K)\cap\Z^n}\langle x,e_n\rangle}$, together with the hypotheses \hyperlink{(H)}{\textrm{(H)}}, we have that $Me_n\in S_{e_n}(K)\cap\{x\in\R^n\,:\langle x,e_n\rangle\geq0\}$. Consequently, the convex hull of $P_{e_n^\perp}(K)$ and the point $m_0(p)e_n$  is contained in $S_{e_n}(K)\cap\{x\in\R^n\,:\langle x,e_n\rangle\geq0\}$. Therefore, for every $0\leq k\leq\lfloor m_0(p)\rfloor$,
$$
S_{e_n}(K)\cap\{x\in\R^n\,:\,\langle x,e_n\rangle=k\}\supseteq\left(1-\frac{k}{m_0(p)}\right)P_{e_n^\perp}(K)\times\{k\}.
$$
Thus,
$$
(S_{e_n}(K)+C_{n-1})\cap\{x\in\R^n\,:\,\langle x,e_n\rangle=k\}\supseteq\left(\left(1-\frac{k}{m_0(p)}\right)P_{e_n}(K)+C_{n-1}\right)\times\{k\}.
$$
We obtain, as a consequence of the discrete Brunn-Minkowski inequality (Theorem \ref{thm: BM_lattice_point_no_G(K)G(L)>0}), that
\begin{eqnarray}\label{eq:InequalityfTildeAndAffine}
\tilde{f}^\frac{1}{n-1}(k)&\geq&\left(1-\frac{k}{m_0(p)}\right)G_{n-1}(P_{e_n^\perp}(K))^\frac{1}{n-1}+\frac{k}{m_0(p)}G_{n-1}(\{0\})^\frac{1}{n-1}\cr
&\geq&\left(1-\frac{k}{m_0(p)}\right)G_{n-1}(P_{e_n^\perp}(K))^\frac{1}{n-1}.
\end{eqnarray}
Since $\lfloor m_0(p)\rfloor\leq m_0(p)<M$ and
$$
\tilde{f}^\frac{1}{n-1}(M)\geq f^\frac{1}{n-1}(M)\geq1>0,
$$
we obtain that
$$
\sum_{k=0}^{M}pk^{p-1}\tilde{f}(k)>\sum_{k=0}^{\lfloor m_0(p)\rfloor}pk^{p-1}\tilde{f}(k)
$$
and then, as a consequence of \eqref{eq:InequalityfTildeAndAffine} and using \eqref{eq:Identityh_p}, that
\begin{eqnarray*}
m_0(p)^p&=&\frac{\sum_{k=0}^{\infty}pk^{p-1}\tilde{f}(k)}{G_{n-1}(P_{e_n^\perp}(K))B_{m_0(p)}(p)}\geq\frac{\sum_{k=0}^{M}pk^{p-1}\tilde{f}(k)}{G_{n-1}(P_{e_n^\perp}(K))B_{m_0(p)}(p)}\cr
&>&\frac{\sum_{k=0}^{\lfloor m_0(p)\rfloor}pk^{p-1}\tilde{f}(k)}{G_{n-1}(P_{e_n^\perp}(K))B_{m_0(p)}(p)}\geq\frac{\sum_{k=0}^{\lfloor m_0(p)\rfloor}pk^{p-1}\left(1-\frac{k}{m_0(p)}\right)^{n-1}}{B_{m_0(p)}(p)}\cr
&=&m_0(p)^p,
\end{eqnarray*}
which is a contradiction. Therefore, $m_0(p)\geq M$.
\end{proof}

\begin{rmk}\label{rmk:m0(p)>1}
Notice that we have obtained in the proof that, in any case, $m_0(p)>1$. Therefore, $B_{m_0(p)}(q)>0$ for every $p\geq1$ and every $q\geq1$.
\end{rmk}

The following lemma shows that the $\frac{1}{n-1}$-affine function $g_p$ defined in \eqref{eq:Definitiong_p}, with $m_0(p)$ the number obtained in Lemma \ref{lem:Existencem0}, has a crossing point with the function $\tilde{f}$ defined in \eqref{eq:DefinitionfTilde}. Such function $g_p$ is constructed so that for every $q>0$
\begin{equation}\label{eq:Sumg_p}
\sum_{k=0}^\infty qk^{q-1}g_p(k)=m_0(p)^qB_{m_0(p)}(q)G_{n-1}(P_{e_n^\perp}(K)),
\end{equation}
with $m_0(p)$ provided by the previous lemma satisfying that
\begin{equation}\label{eq:m_0(p)}
\sum_{k=0}^{\infty} pk^{p-1}\tilde{f}(k)=m_0(p)^pB_{m_0(p)}(p)G_{n-1}(P_{e_n^\perp}(K)).
\end{equation}

\begin{lemma}\label{lem:existenceDiscreter0}
Let $K\subseteq\R^n$ be a convex body with $0\in P_{e_n^\perp}(K)$ satisfying the hypotheses \hyperlink{(H)}{\textrm{(H)}} and, for any $p\geq 1$, let $m_0(p)$ be given by Lemma \ref{lem:Existencem0}. Let $f,\tilde{f}:[0,\infty)\to\N\cup\{0\}$ be the functions given by
\begin{itemize}
\item $f(r)=G_{n-1}(S_{e_n}(K)\cap\{x\in\R^n\,:\,\langle x,e_n\rangle=r\})$,
\item $\tilde{f}(r)=G_{n-1}((S_{e_n}(K)+C_{n-1})\cap\{x\in\R^n\,:\,\langle x,e_n\rangle=r\})$
\end{itemize}
and let $g_p:[0,\infty)\to[0,\infty)$ be the function defined in \eqref{eq:Definitiong_p}, given by
$$
g_p(x)=\begin{cases}
\left(1-\frac{x}{m_0(p)}\right)^{n-1}G_{n-1}(P_{e_n^\perp}(K)) &\textrm{ if }0\leq x\leq m_0(p)\cr
0&\textrm{ if }x>m_0(p).\cr
\end{cases}
$$
Then, there exists $r_0(p)\in[0,\infty)$ such that
$$
\tilde{f}(k)\geq g_p(k)
$$
for every $k\in\N\cup\{0\}$ with $0\leq k< r_0(p)$ and
$$
g_p(k)\geq f(k)
$$
for every $k\in\N$ with $k\geq r_0(p)$.
\end{lemma}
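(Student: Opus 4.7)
The plan is to set $r_0(p) := 1 + k^*$ where $k^* := \max\{k \in \N : f(k) > g_p(k)\}$, and $r_0(p) := 0$ if this set is empty. Since $f(k) = 0$ for every integer $k > M$ and $g_p \geq 0$, any such $k^*$ lies in $\{1,\dots,M\}$, so $k^* \leq M \leq m_0(p)$. With this choice the second conclusion is automatic: every integer $k \geq r_0(p)$ satisfies $k > k^*$, so by maximality $f(k) \leq g_p(k)$.

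The first conclusion is non-trivial only when $k^* \geq 1$. The key ingredient is a mixed concavity of $\tilde{f}^{1/(n-1)}$ expressed in terms of $f^{1/(n-1)}$ at integer heights. Writing $A_j := S_{e_n}(K)\cap\{x : \langle x,e_n\rangle = j\}$, convexity of $S_{e_n}(K)$ gives $(1-\lambda)A_j + \lambda A_l \subseteq A_k$ for integers $j \leq k \leq l$ with $\lambda = (k-j)/(l-j)$, so $A_k + C_{n-1} \supseteq (1-\lambda)A_j + \lambda A_l + C_{n-1}$. Applying Theorem \ref{thm: BM_lattice_point_no_G(K)G(L)>0} (discrete Brunn--Minkowski) to the non-empty bounded sets $A_j, A_l$ then yields
$$
\tilde{f}^{1/(n-1)}(k) = G_{n-1}(A_k+C_{n-1})^{1/(n-1)} \geq (1-\lambda)\, f^{1/(n-1)}(j) + \lambda\, f^{1/(n-1)}(l).
$$

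I specialise this to $j=0$ and $l=k^*$; the slice $A_0 = P_{e_n^\perp}(K)$ is non-empty by hypothesis and $f(k^*) \geq 1$ by the choice of $k^*$. Writing $G := G_{n-1}(P_{e_n^\perp}(K)) = f(0)$, one obtains
$$
\tilde{f}^{1/(n-1)}(k) \geq \left(1-\frac{k}{k^*}\right) G^{1/(n-1)} + \frac{k}{k^*}\, f^{1/(n-1)}(k^*) \qquad (0 \leq k \leq k^*).
$$
The right-hand side is the affine function of $k$ joining $(0, G^{1/(n-1)})$ to $(k^*, f^{1/(n-1)}(k^*))$, whereas $g_p^{1/(n-1)}$ is affine on $[0, m_0(p)]$ and joins $(0, G^{1/(n-1)})$ to $(m_0(p), 0)$. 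Both agree at $k=0$; the former strictly exceeds the latter at $k=k^*$ by the choice of $k^*$; and since $k^* \leq m_0(p)$, both are genuinely affine on $[0, k^*]$. Two straight lines through a common point with one dominating at an endpoint must dominate throughout the interval, so chaining this with the mixed concavity yields $\tilde{f}(k) \geq g_p(k)$ for every integer $0 \leq k \leq k^*$, which is exactly the first conclusion.

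The main technical obstacle is that a direct concavity inequality for $\tilde{f}^{1/(n-1)}$ in terms of itself fails: applying discrete Brunn--Minkowski to $A_j+C_{n-1}$ and $A_l+C_{n-1}$ produces an extra $+C_{n-1}$ on the left, giving only $G_{n-1}(A_k+2C_{n-1})^{1/(n-1)}$ rather than $\tilde{f}^{1/(n-1)}(k)$. The correct tool is the asymmetric ``mixed'' inequality above, and its asymmetry matches precisely the asymmetric roles played by $\tilde{f}$ and $f$ in the two conclusions of the lemma, making it the natural object for the argument.
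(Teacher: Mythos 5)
Your proof is correct. It uses the same essential tool as the paper -- the ``mixed'' concavity $\tilde{f}^{1/(n-1)}(k)\geq(1-\lambda)f^{1/(n-1)}(j)+\lambda f^{1/(n-1)}(l)$ obtained by combining convexity of $S_{e_n}(K)$ with Theorem \ref{thm: BM_lattice_point_no_G(K)G(L)>0} -- but organizes the argument in the opposite direction. The paper defines the crossing point as $\inf\{r\geq0:\tilde{f}(r)\leq g_p(r)\}$ over the reals, so the conclusion $\tilde{f}\geq g_p$ below it is immediate, and the work goes into showing $g_p\geq f$ above it; this forces a discussion of right-continuity of $\tilde{f}$ (to know the infimum is attained) and a three-way case analysis according to whether the infimum exceeds, equals, or is less than $M$. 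You instead take $r_0(p)=1+\max\{k\in\N:f(k)>g_p(k)\}$, a maximum over a finite set of integers, so the conclusion $g_p\geq f$ above it is immediate, and the work goes into showing $\tilde{f}\geq g_p$ below it via the chord from $0$ to $k^*$ (where $f(k^*)>g_p(k^*)$ gives the strict separation at the right endpoint, and the two affine functions agree at $0$). The needed bounds $k^*\leq M\leq m_0(p)$ and the non-emptiness of $A_0$ and $A_{k^*}$ are all justified. Your version buys a purely integer-valued, topology-free argument with no case analysis; the paper's version produces a crossing point with the slightly stronger property $\tilde{f}(r)>g_p(r)$ for all real $r<r_0(p)$, though only the integer statement is used later.
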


\begin{proof}
Let $K\subseteq\R^n$ be a convex body with $0\in P_{e_n^\perp}(K)$ satisfying the hypotheses \hyperlink{(H)}{(H)} and let $f,\tilde{f}$, and $g_p$ be the functions defined in the statement. Let us denote, for every $r\geq0$,
$$
D_r=S_{e_n}(K)\cap\{x\in\R^n\,:\,\langle x,e_n\rangle=r\},
$$
and
$$
\tilde{D}_r=(S_{e_n}(K)+C_{n-1})\cap\{x\in\R^n\,:\,\langle x,e_n\rangle=r\}=D_r+C_{n-1}.
$$
Therefore, $f(r)=G_{n-1}(D_r)$ and $\tilde{f}(r)=G_{n-1}(\tilde{D}_r)$ for every $r\geq0$.

Notice that, as mentioned in Remark \ref{rmk:Supports},
$$
\textrm{supp}(f)\subseteq\textrm{supp}(\tilde{f})\subseteq\left[0,\frac{1}{2}\max_{y\in P_{e_n^\perp}}\vol_1(K\cap(y+\langle e_n\rangle))\right]
$$
and then, for every $\displaystyle{r>\max_{y\in P_{e_n^\perp}(K)}\frac{1}{2}\vol_1(K\cap(y+\langle e_n\rangle))}$, we have
$$
0=\tilde{f}(r)\leq g_p(r).
$$
Therefore, we can define
$$
\tilde{r}_0(p)=\inf\{r\geq0\,:\,\tilde{f}(r)\leq g_p(r)\}\in[0,\infty).
$$
From the definition of $\tilde{r}_0(p)$, we trivially have that for every $0\leq r<\tilde{r}_0(p)$
$$
\tilde{f}(r)> g_p(r).
$$
In particular, for every $r=k\in\N\cup\{0\}$ with $0\leq k<\tilde{r}_0(p)$, we have that
$$
\tilde{f}(k)>g_p(k).
$$

%Let us now prove that for every $k\in\N$ with $k\geq r_0(p)$ we have that $g_p(k)\geq f(k)$.
Notice also that $M$ is defined in \hyperlink{(H)}{(H)} as $\displaystyle{M=\max_{x\in S_{e_n}(K)\cap\Z^n}\langle x, e_n\rangle}$ and, as mentioned in Remark \ref{rmk:Supports}, $\textrm{supp}(f)\subseteq[0, M+1)$. Then, for every $k\in\N$ such that $k>M$ we have
$$
g_p(k)\geq f(k)=0.
$$
Therefore, if $\tilde{r}_0(p)>M$, then we can take $r_0(p)=\tilde{r}_0(p)$. Let us assume that $\tilde{r}_0(p)\leq M$.

Let us assume first that $\tilde{r}_0(p)=M\in\N$. If $f(M)\leq g_p(M)$, then we can take $r_0(p)=\tilde{r}_0(p)$. If, on the contrary, $f(M)>g_p(M)$, we have, by Remark \ref{rmk:Supports}
$$
\tilde{f}(M)\geq f(M)>g_p(M)
$$
and we can take any $r_0(p)>\tilde{r}_0(p)=M$.

Let us assume now that $\tilde{r}_0(p)<M$. In such case, let us take $r_0(p)=\tilde{r}_0(p)$ and let us see that for every $k\in\N$ with $r_0(p)\leq k\leq M$ we have that $g_p(k)\geq f(k)$. Notice that for every $0\leq r\leq M$, the set $\tilde{D}_r$ is a convex open (in the topology induced in $\R^{n-1}\times\{r\}$ by the standard topology in $\R^n$) bounded set, and that $P_{e_n^\perp}(\tilde{D}_{r_1})\supseteq P_{e_n^\perp}(\tilde{D}_{r_2})$ if $r_1\leq r_2$. Moreover, for any $0\leq r_1<\max_{y\in P_{e_n^\perp}(K)}\frac{1}{2}\vol_1(K\cap(y+\langle e_n\rangle))$ we have $\displaystyle{\bigcup_{r_2>r_1}P_{e_n^\perp}(\tilde{D}_{r_2})=P_{e_n^\perp}(\tilde{D}_{r_1})}$. Therefore, the function $\tilde{f}$ is continuous from the right at every $0\leq r_1<\max_{y\in P_{e_n^\perp}(K)}\frac{1}{2}\vol_1(K\cap(y+\langle e_n\rangle))$ and, from the definition of $\tilde{r}_0(p)$ as an infimum, we obtain that if $r_0(p)=\tilde{r}_0(p)<\max_{y\in P_{e_n^\perp}(K)}\frac{1}{2}\vol_1(K\cap(y+\langle e_n\rangle))$, then
\begin{equation}\label{eq:fTildeAndg}
\tilde{f}(r_0(p))\leq g_p(r_0(p))
\end{equation}
and
$$
r_0(p)=\min\{r\geq0\,:\,\tilde{f}(r)\leq g_p(r)\}.
$$

Notice that, as explained in Remark \ref{rmk:Supports},
$$
[0,M]\subseteq\textrm{supp}(f)\subseteq\textrm{supp}(\tilde{f})\subseteq\left[0,\frac{1}{2}\max_{y\in P_{e_n^\perp}}\vol_1(K\cap(y+\langle e_n\rangle))\right].
$$
Thus, $M\leq \max_{y\in P_{e_n^\perp}(K)}\frac{1}{2}\vol_1(K\cap(y+\langle e_n\rangle))$ and, since $r_0(p)<M$, then $r_0(p)$ satisfies \eqref{eq:fTildeAndg}. If $r_0(p)< r\leq M$, then, taking $\lambda=\frac{r_0(p)}{r}\in[0,1)$, we have, by the convexity of $S_{e_n}(K)+C_{n-1}$, that
$$
\tilde{D}_{r_0(p)}\supseteq (1-\lambda)\tilde{D}_0+\lambda \tilde{D}_r=(1-\lambda)D_0+\lambda D_r+C_{n-1}.
$$
By the discrete Brunn-Minkowski inequality (Theorem \ref{thm: BM_lattice_point_no_G(K)G(L)>0}), we have
\begin{eqnarray*}
\tilde{f}^\frac{1}{n-1}(r_0(p))&\geq&G_{n-1}((1-\lambda)D_0+\lambda D_r+C_{n-1})^\frac{1}{n-1}\cr
&\geq&(1-\lambda)f^\frac{1}{n-1}(0)+\lambda f^\frac{1}{n-1}(r).
\end{eqnarray*}
Taking into account that $r_0(p)<r\leq M\leq m_0(p)$,
\begin{eqnarray*}
g_p^\frac{1}{n-1}(r_0(p))&=&\left(1-\frac{r_0(p)}{m_0(p)}\right)G_{n-1}(P_{e_n^\perp}(K))^\frac{1}{n-1}\cr
&=&(1-\lambda)G_{n-1}(P_{e_n^\perp}(K))^\frac{1}{n-1}+\lambda\left(1-\frac{r}{m_0}\right)G_{n-1}(P_{e_n^\perp}(K))^\frac{1}{n-1}\cr
&=&(1-\lambda)f^\frac{1}{n-1}(0)+\lambda g_p^\frac{1}{n-1}(r).\cr
\end{eqnarray*}
Thus, by \eqref{eq:fTildeAndg}, we have that for every $r_0(p)<r\leq M$
$$
(1-\lambda)f^\frac{1}{n-1}(0)+\lambda g_p^\frac{1}{n-1}(r)=g_p^\frac{1}{n-1}(r_0(p))\geq \tilde{f}^\frac{1}{n-1}(r_0(p))\geq (1-\lambda)f^\frac{1}{n-1}(0)+\lambda f^\frac{1}{n-1}(r)
$$
Therefore, for every $r_0(p)<r\leq M$,
$$
g_p(r)\geq f(r).
$$
By \eqref{eq:fTildeAndg}, this inequality also holds for $r=r_0(p)$. Therefore, if $\tilde{r}_0(p)=r_0(p)<M$, for every $k\in\N$ such that $r_0(p)\leq k\leq M$, we have that
$$
g_p(k)\geq f(k).
$$
\end{proof}

We are now able to prove the following theorem, which will give Theorem \ref{thm:PurelyDiscreteZhang}.

\begin{thm}\label{thm:CompletelyDiscreteBerwald}
Let $K\subseteq\R^n$ be a convex body with $0\in P_{e_n^\perp}(K)$ satisfying the hypotheses \hyperlink{(H)}{\textrm{(H)}}. Let $p\geq1$ and $m_0(p)$ be given by Lemma \ref{lem:Existencem0}. Then, for any $p<q$ we have that
$$
\left(\frac{B_{m_0(p)}(q)^{-1}}{G_{n-1}(P_{e_n^\perp}(K))}\sum_{k=0}^{\infty}qk^{q-1}f(k)\right)^\frac{1}{q}\leq \left(\frac{B_{m_0(p)}(p)^{-1}}{G_{n-1}(P_{e_n^\perp}(K))}\sum_{k=0}^{\infty}pk^{p-1}\tilde{f}(k)\right)^\frac{1}{p},
$$
where the functions $f,\tilde{f}:[0,\infty)\to\N\cup\{0\}$ are given by
\begin{itemize}
\item $f(r)=G_{n-1}(S_{e_n}(K)\cap\{x\in\R^n\,:\,\langle x,e_n\rangle=r\})$,
\item $\tilde{f}(r)=G_{n-1}((S_{e_n}(K)+C_{n-1})\cap\{x\in\R^n\,:\,\langle x,e_n\rangle=r\})$.
\end{itemize}
\end{thm}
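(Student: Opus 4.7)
The plan is to reduce the inequality to a single comparison of sums by exploiting the fact that the function $g_p$ from Lemma \ref{lem:existenceDiscreter0} was engineered precisely so that the weighted sums $\sum_k pk^{p-1}g_p(k)$ and $\sum_k pk^{p-1}\tilde f(k)$ coincide (this is equation \eqref{eq:m_0(p)}), while the corresponding $q$-weighted sum $\sum_k qk^{q-1}g_p(k)$ equals $m_0(p)^q B_{m_0(p)}(q)G_{n-1}(P_{e_n^\perp}(K))$ (this is equation \eqref{eq:Sumg_p}). Consequently, the right-hand side of the theorem simplifies to $m_0(p)$, and so does the analogous quantity with $g_p$ in place of $f$ on the left. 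The whole theorem therefore boils down to proving
$$
\sum_{k=0}^{\infty} q k^{q-1} f(k) \;\le\; \sum_{k=0}^{\infty} q k^{q-1} g_p(k).
$$

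The main ingredient to establish this is the crossing structure given by Lemma \ref{lem:existenceDiscreter0}, combined with the elementary observation that, since $q>p$, the map $k\mapsto k^{q-p}$ is non-decreasing, so
$$
k^{q-1} \le r_0(p)^{q-p} k^{p-1} \text{ if } 0\le k<r_0(p), \qquad k^{q-1} \ge r_0(p)^{q-p} k^{p-1} \text{ if } k\ge r_0(p).
$$
I would then show, termwise, the key inequality
$$
k^{q-1}\bigl(g_p(k)-f(k)\bigr) \;\ge\; r_0(p)^{q-p}\, k^{p-1}\bigl(g_p(k)-\tilde f(k)\bigr), \qquad k\in\N\cup\{0\}.
$$
For $k<r_0(p)$, the lemma gives $\tilde f(k)\ge g_p(k)$, so $g_p(k)-\tilde f(k)\le 0$; multiplying the reversed weight inequality by this non-positive quantity flips the sign, and one uses $g_p-f\ge g_p-\tilde f$ (because $f\le\tilde f$ everywhere) to close the estimate. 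For $k\ge r_0(p)$, the lemma gives $g_p(k)\ge f(k)\ge 0$, and the weight inequality runs in the correct direction; again $f\le\tilde f$ lets one replace $f$ with $\tilde f$ on the right-hand side at the cost of decreasing a non-negative term.

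Summing the pointwise inequality over $k$ and invoking the equality $\sum_{k} p k^{p-1}(g_p(k)-\tilde f(k))=0$ coming from Lemma \ref{lem:Existencem0} (i.e.\ the defining property of $m_0(p)$) makes the right-hand side vanish, yielding $\sum_k q k^{q-1}(g_p(k)-f(k))\ge 0$, which is exactly what is needed. Dividing by $B_{m_0(p)}(q)G_{n-1}(P_{e_n^\perp}(K))$, taking the $q$-th root, and comparing with the value $m_0(p)$ computed from \eqref{eq:m_0(p)} produces the stated inequality. A minor bookkeeping point will be the value at $k=0$: when $p=1$ one uses the convention $0^0=1$, but since $q>p\ge 1$ the factor $k^{q-1}$ still vanishes at $k=0$, so the boundary term is harmless; the case $r_0(p)=0$ (where the first sub-sum is empty) is immediate from $g_p\ge f$.

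The only real obstacle is checking the termwise inequality on both sides of the crossing point with all sign conventions correct; once that is in place, the rest is straightforward algebra using the two calibration identities \eqref{eq:Sumg_p} and \eqref{eq:m_0(p)}. In particular, no further geometric input beyond Lemma \ref{lem:existenceDiscreter0} and the pointwise domination $f\le\tilde f$ (which is built into the definitions, as recorded in Remark \ref{rmk:Supports}) is required.
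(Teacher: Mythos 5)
Your proposal is correct and follows essentially the same route as the paper: both reduce the statement to $\sum_k k^{q-1}f(k)\le\sum_k k^{q-1}g_p(k)$ via the calibration identities \eqref{eq:Sumg_p} and \eqref{eq:m_0(p)}, and both establish this by splitting at the crossing point $r_0(p)$ from Lemma \ref{lem:existenceDiscreter0}, using the monotonicity of $k^{q-p}$ and the domination $f\le\tilde f$. The only difference is presentational — you phrase the key estimate termwise, whereas the paper writes it as a chain of inequalities between the two partial sums — and your handling of the $k=0$ and $r_0(p)=0$ boundary cases is sound.
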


\begin{proof}
Let $K\subseteq\R^n$ be a convex body with $0\in P_{e_n^\perp}(K)$  satisfying the hypotheses \hyperlink{(H)}{(H)} and let $p\geq 1$. By Lemma \ref{lem:Existencem0} and Remark \ref{rmk:m0(p)>1}, there exists $m_0(p)>1$ such that
$$
m_0(p)^pB_{m_0(p)}(p)=\frac{1}{G_{n-1}(P_{e_n^\perp}(K))}\sum_{k=0}^\infty pk^{p-1}\tilde{f}(k)>0.
$$
Let also $g_p$  be the function defined in \eqref{eq:Definitiong_p}, given by
$$
g_p(r)=\begin{cases}
\left(1-\frac{r}{m_0(p)}\right)^{n-1}G_{n-1}(P_{e_n^\perp}(K)) &\textrm{ if }0\leq r\leq m_0(p)\cr
0&\textrm{ if }r>m_0(p),\cr
\end{cases}
$$
which, from the definition of $B_{m_0(p)}(q)$ for $q>0$, satisfies \eqref{eq:Sumg_p}. That is, for every $q>0$
$$
\sum_{k=0}^\infty qk^{q-1}g_p(k)=m_0(p)^qB_{m_0(p)}(q)G_{n-1}(P_{e_n^\perp}(K)).
$$
Therefore, taking $q=p$
$$
\sum_{k=0}^{\infty}k^{p-1}g_p(k)=\sum_{k=0}^{\lfloor m_0(p)\rfloor}k^{p-1}g_p(k)=\sum_{k=0}^\infty k^{p-1}\tilde{f}(k).
$$

Let now $q>p$. By Lemma \ref{lem:existenceDiscreter0} there exists $r_0(p)$ such that $\tilde{f}(k)\geq g_p(k)$
for every $k\in\N\cup\{0\}$ with $0\leq k< r_0(p)$ and $g_p(k)\geq f(k)$
for every $k\in\N$ with $k\geq r_0(p)$. Taking into account that $f(k)\leq\tilde{f}(k)$ for every $k\geq0$, and understanding the first sum as $0$ if $r_0(p)=0$, we have
\begin{eqnarray*}
&&\sum_{k=0}^{\lceil r_0(p)\rceil-1}k^{q-1}\left(\tilde{f}(k)-g_p(k)\right)-\sum_{k=\lceil r_0(p)\rceil}^\infty k^{q-1}\left(g_p(k)-f(k)\right)\cr
&=&\sum_{k=0}^{\lceil r_0(p)\rceil-1}k^{q-p}k^{p-1}\left(\tilde{f}(k)-g_p(k)\right)-\sum_{k=\lceil r_0(p)\rceil}^\infty k^{q-p}k^{p-1}\left(g_p(k)-f(k)\right)\cr
&\leq&{r_0(p)}^{q-p}\sum_{k=0}^{\lceil r_0(p)\rceil-1}k^{p-1}\left(\tilde{f}(k)-g_p(k)\right)-{r_0(p)}^{q-p}\sum_{k=\lceil r_0(p)\rceil}^\infty k^{p-1}\left(g_p(k)-f(k)\right)\cr
&=&{r_0(p)}^{q-p}\sum_{k=0}^{\lceil r_0(p)\rceil-1}k^{p-1}\left(\tilde{f}(k)-g_p(k)\right)+{r_0(p)}^{q-p}\sum_{k=\lceil r_0(p)\rceil}^\infty k^{p-1}\left(f(k)-g_p(k)\right)\cr
&\leq&{r_0(p)}^{q-p}\sum_{k=0}^{\lceil r_0(p)\rceil-1}k^{p-1}\left(\tilde{f}(k)-g_p(k)\right)+{r_0(p)}^{q-p}\sum_{k=\lceil r_0(p)\rceil}^\infty k^{p-1}\left(\tilde{f}(k)-g_p(k)\right)\cr
&=&{r_0}^{q-p}\sum_{k=0}^{\infty}k^{p-1}\left(\tilde{f}(k)-g_p(k)\right)=0.
\end{eqnarray*}
Therefore, we have that
$$
\sum_{k=0}^{\lceil r_0(p)\rceil-1}k^{q-1}\tilde{f}(k)+\sum_{k=\lceil r_0(p)\rceil}^\infty k^{q-1}f(k)\leq\sum_{k=0}^\infty k^{q-1}g_p(k).
$$
and then, since $\tilde{f}(k)\geq f(k)$ for every $k\geq0$,
$$
\sum_{k=0}^{\infty}k^{q-1}f(k)\leq\sum_{k=0}^\infty k^{q-1}g_p(k).
$$
Consequently, since $m_0(p)$  satisfies \eqref{eq:m_0(p)}, $g_p$ satisfies \eqref{eq:Sumg_p}, and $B_{m_0(p)}(q)>0$ for every $q\geq 1$, as mentioned in Remark \ref{rmk:m0(p)>1},
\begin{eqnarray*}
\left(\frac{B_{m_0(p)}(q)^{-1}}{G_{n-1}(P_{e_n^\perp}(K))}\sum_{k=0}^{\infty}qk^{q-1}f(k)\right)^\frac{1}{q}&\leq&\left(\frac{B_{m_0(p)}(q)^{-1}}{G_{n-1}(P_{e_n^\perp}(K))}\sum_{k=0}^{\infty}qk^{q-1}g_p(k)\right)^\frac{1}{q}=m_0(p)\cr
&=&\left(\frac{B_{m_0(p)}(p)^{-1}\sum_{k=0}^\infty pk^{p-1}\tilde{f}(k)}{G_{n-1}(P_{e_n^\perp}(K))}\right)^\frac{1}{p}.\cr
\end{eqnarray*}
\end{proof}

We can finally prove Theorem \ref{thm:PurelyDiscreteZhang}:

\begin{proof}[Proof of Theorem \ref{thm:PurelyDiscreteZhang}]
Let $K\subseteq\R^n$ be a convex body with $0\in P_{e_n^\perp}(K)$ such that $\displaystyle{\max_{y\in P_{e_n^\perp}(K)\cap\Z^n}G_1(S_{e_n}(K)\cap(y+\langle e_n\rangle))=G_1(S_{e_n}(K)\cap\langle e_n\rangle)}$ and let\\ $\displaystyle{M=\max_{x\in S_{e_n}(K)\cap\Z^n}\langle x,e_n\rangle}$. If $M=0$ then, as mentioned in Remark \ref{rmk:RemarkM=0}, any value of $m_0>1$ gives the inequality, even though $m_0$ is not defined by Lemma \ref{lem:Existencem0}. Let us assume that $M\geq 1$. Therefore $K$ satisfies the hypotheses \hyperlink{(H)}{(H)}. Let $p=1$, $f,\tilde{f}:[0,\infty)\to\N\cup\{0\}$ be defined as in Theorem \ref{thm:CompletelyDiscreteBerwald} by
\begin{itemize}
\item $f(r)=G_{n-1}(S_{e_n}(K)\cap\{x\in\R^n\,:\,\langle x,e_n\rangle=r\})$,
\item $\tilde{f}(r)=G_{n-1}((S_{e_n}(K)+C_{n-1})\cap\{x\in\R^n\,:\,\langle x,e_n\rangle=r\})$,
\end{itemize}
and let $m_0=m_0(1)>1$ be the number given by Lemma \ref{lem:Existencem0}. Applying Theorem \ref{thm:CompletelyDiscreteBerwald} with $p=1$ and $q=n+1$ we obtain that
$$
\left(\frac{B_{m_0}(n+1)^{-1}}{G_{n-1}(P_{e_n^\perp}(K))}\sum_{k=0}^{\infty}(n+1)k^{n}f(k)\right)^\frac{1}{n+1}\leq \frac{B_{m_0}(1)^{-1}}{G_{n-1}(P_{e_n^\perp}(K))}\sum_{k=0}^{\infty}\tilde{f}(k).
$$
Equivalently,
$$
B_{m_0}(n+1)^{-1}\sum_{k=0}^{\infty}(n+1)k^{n}f(k)\leq \frac{\left(B_{m_0}(1)^{-1}\sum_{k=0}^{\infty}\tilde{f}(k)\right)^{n+1}}{(G_{n-1}(P_{e_n^\perp}(K)))^n}.
$$
From \eqref{eq:SteinerSymmetrization2}, $S_{e_n}(K)$ is symmetric respect to the hyperplane $\{x\in\R^n\,:\,\langle x, e_n\rangle=0\}$ and then, from the definition of $f$ and taking into account that for any $y\in\Z^{n-1}$ and $k\in\N$ we have $(y,k)\neq(y,-k)$ and that the term corresponding to $k=0$ in the following sum is $0$, we obtain
$$
\sum_{k=0}^{\infty}k^{n}f(k)=\frac{1}{2}\sum_{x\in S_{e_n}(K)\cap\Z^n}|\langle x,e_n\rangle|^n=\frac{1}{2}\int_{S_{e_n}(K)}|\langle x,e_n\rangle|^ndG_n(x).
$$
In the same way, since $S_{e_n}(K)+C_{n-1}$ is also symmetric respect to the hyperplane $\{x\in\R^n\,:\,\langle x, e_n\rangle=0\}$ and taking into account that for any $y\in\Z^{n-1}$ and $k\in\N$ we have $(y,k)\neq(y,-k)$, but $(y,0)=(y,-0)$ and the term corresponding to the following sum is not 0, we obtain
$$
\sum_{k=0}^{\infty}\tilde{f}(k)=\frac{1}{2}\left(G_n(S_{e_n}(K)+C_{n-1})+G_{n-1}(P_{e_n^\perp}(K)+C_{n-1})\right).
$$
Therefore,
\begin{eqnarray*}
&&\frac{(n+1)B_{m_0}(n+1)^{-1}}{B_{m_0}(1)^{-(n+1)}}2^n\int_{S_{e_n}(K)}|\langle x,e_n\rangle|^ndG_n(x)\leq\cr
&\leq&\frac{\left(G_n(S_{e_n}(K)+C_{n-1})+G_{n-1}(P_{e_n^\perp}(K)+C_{n-1})\right)^{n+1}}{(G_{n-1}(P_{e_n^\perp}(K)))^n}.
\end{eqnarray*}
\end{proof}

Finally, let us see that Theorem \ref{thm:PurelyDiscreteZhang} implies Theorem \ref{thm:ZhangPreIntegration2}. Let us recall that, by Lemma \ref{lem:IdentitiesIntegral}, Theorem \ref{thm:ZhangPreIntegration2} is an equivalent form of Theorem \ref{thm:ZhangPreIntegration}, which implies, as seen in Corollary \ref{cor:Zhang}, Zhang's inequality \eqref{eq:Zhang}.

\begin{cor}[Theorem \ref{thm:ZhangPreIntegration2}]\label{prop:FromPurelyDiscreteZhangDiscreteZhangToZhang}
Let $K\subseteq\R^n$ be a convex body. Then
$$
\frac{{{2n}\choose{n}}}{n^n}2^n\int_{S_{e_n}(K)}|\langle x,e_n\rangle|^ndx\leq\frac{(\vol_n(K))^{n+1}}{(\vol_{n-1}(P_{e_n^\perp}(K)))^n}.
$$
\end{cor}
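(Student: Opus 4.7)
The plan is to derive the continuous inequality from Theorem \ref{thm:PurelyDiscreteZhang} by a dilation-and-limit argument, in the same spirit as Corollary \ref{cor:DiscreteToContinuous}.

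First I would reduce to a convenient representative of $K$. Both sides of the target inequality are invariant under translations: $S_{e_n}(K+v)=S_{e_n}(K)+P_{e_n^\perp}(v)$, so $|\langle x,e_n\rangle|^n$ is unchanged on the Steiner symmetral, while $\vol_n(K)$ and $\vol_{n-1}(P_{e_n^\perp}(K))$ are obviously translation-invariant. Thus we may translate $K$ so that the concave function $f(y)=\vol_1(K\cap(y+\langle e_n\rangle))$ attains its maximum at $y=0$ and so that $0\in P_{e_n^\perp}(K)$. With this normalization in hand, for every $\lambda>0$ the dilated body $\lambda K$ satisfies the hypotheses of Theorem \ref{thm:PurelyDiscreteZhang}: clearly $0\in \lambda P_{e_n^\perp}(K)=P_{e_n^\perp}(\lambda K)$, and from \eqref{eq:SteinerSymmetrization2} we have $G_1(S_{e_n}(\lambda K)\cap (y+\langle e_n\rangle))=2\lfloor \lambda f(y/\lambda)/2\rfloor+1$ on $P_{e_n^\perp}(\lambda K)\cap\Z^n$, which is maximized at $y=0$ because $\lambda f(\cdot/\lambda)$ is. For $\lambda$ small enough that $M=0$, the discrete inequality is trivial by Remark \ref{rmk:RemarkM=0}; for $\lambda$ large enough that $M(\lambda)\ge 1$, Lemma \ref{lem:Existencem0} yields a value $m_0(\lambda)\ge M(\lambda)$.

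Next I would apply Theorem \ref{thm:PurelyDiscreteZhang} to $\lambda K$, divide both sides by $\lambda^{2n}$, and send $\lambda\to\infty$. For the integral on the left, the change of variables $x=\lambda y$ gives
$$
\frac{1}{\lambda^{2n}}\int_{S_{e_n}(\lambda K)}|\langle x,e_n\rangle|^n\,dG_n(x)=\frac{1}{\lambda^n}\sum_{y\in S_{e_n}(K)\cap\frac{1}{\lambda}\Z^n}|\langle y,e_n\rangle|^n,
$$
which converges to $\int_{S_{e_n}(K)}|\langle y,e_n\rangle|^n\,dy$ by the Riemann-sum identity \eqref{eq:ApproachingIntegralWithG}. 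For the coefficient, $M(\lambda)\to\infty$ forces $m_0(\lambda)\to\infty$, so Lemma \ref{lem:limit} gives $B_{m_0(\lambda)}(1)^{-1}\to n$ and $B_{m_0(\lambda)}(n+1)^{-1}\to\binom{2n}{n-1}$, whence
$$
\frac{(n+1)B_{m_0(\lambda)}(n+1)^{-1}}{B_{m_0(\lambda)}(1)^{-(n+1)}}\longrightarrow\frac{(n+1)\binom{2n}{n-1}}{n^{n+1}}=\frac{n\binom{2n}{n}}{n^{n+1}}=\frac{\binom{2n}{n}}{n^n},
$$
using $(n+1)\binom{2n}{n-1}=n\binom{2n}{n}$. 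For the right-hand side, \eqref{eq:ApproachingVolByDilationsWithG} (applied with $M=C_{n-1}$) gives $G_n(\lambda S_{e_n}(K)+C_{n-1})/\lambda^n\to\vol_n(K)$, while $G_{n-1}(\lambda P_{e_n^\perp}(K)+C_{n-1})/\lambda^n$ tends to $0$ (the natural normalization is $\lambda^{n-1}$) and $G_{n-1}(\lambda P_{e_n^\perp}(K))/\lambda^{n-1}\to\vol_{n-1}(P_{e_n^\perp}(K))$, so the right-hand side divided by $\lambda^{2n}=\lambda^{n(n+1)}/\lambda^{n(n-1)}$ converges to $\vol_n(K)^{n+1}/\vol_{n-1}(P_{e_n^\perp}(K))^n$.

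Assembling these three limits yields exactly the inequality of the corollary. The only genuinely delicate point is verifying that the hypotheses of Theorem \ref{thm:PurelyDiscreteZhang} persist under dilation (equivalence of the continuous and discrete max conditions via the explicit formula for $G_1$ on centered fibers), and tracking the combinatorial simplification $(n+1)\binom{2n}{n-1}/n^{n+1}=\binom{2n}{n}/n^n$; the remaining steps are routine applications of the convergence results recalled in Remark \ref{rmk:ApproachingVolByDilationsWithGApproachingIntegralWithG} and Lemma \ref{lem:limit}.
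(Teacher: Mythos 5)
Your proposal is correct and follows essentially the same route as the paper's proof: translate so the maximal fiber is over the origin (whence the centered fibers of $S_{e_n}(\lambda K)$ give the discrete max condition for every $\lambda$), apply Theorem \ref{thm:PurelyDiscreteZhang} to $\lambda K$, divide by $\lambda^{2n}$, and let $\lambda\to\infty$ using \eqref{eq:ApproachingVolByDilationsWithG}, \eqref{eq:ApproachingIntegralWithG}, Lemma \ref{lem:limit}, and the identity $(n+1)\binom{2n}{n-1}=n\binom{2n}{n}$. All limits and the verification of the hypotheses under dilation match the paper's argument.
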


\begin{proof}
Let $K\subseteq\R^n$ be a convex body. Since the inequality we want to prove is invariant under translations we can assume, without loss of generality, that
$$
\max_{y\in P_{e_n^\perp}(K)}\vol_1(K\cap(y+\langle e_n\rangle))=\vol_1(K\cap\langle e_n\rangle).
$$
Therefore, for any $\lambda>0$
$$
\max_{y\in P_{e_n^\perp}(\lambda K)}\vol_1(\lambda K\cap(y+\langle e_n\rangle))=\vol_1(\lambda K\cap\langle e_n\rangle).
$$
Since for every $y\in P_{e_n^\perp}(\lambda K)$ the segment $S_{e_n}(\lambda K)\cap(y+\langle e_n\rangle)$ is centered at $y$, we have
$$
\max_{y\in P_{e_n^\perp}(\lambda K)}G_1(S_{e_n}(\lambda K)\cap(y+\langle e_n\rangle))=G_1(S_{e_n}(\lambda K)\cap\langle e_n\rangle).
$$
In particular,
$$
\max_{y\in P_{e_n^\perp}(\lambda K)\cap\Z^n}G_1(S_{e_n}(\lambda K)\cap(y+\langle e_n\rangle))=G_1(S_{e_n}(\lambda K)\cap\langle e_n\rangle).
$$
By Theorem \ref{thm:PurelyDiscreteZhang}, there exists $\displaystyle{m_{0,\lambda}\geq \max_{x\in S_{e_n}(\lambda K)\cap\Z^n}\langle x,e_n\rangle}$ such that
\begin{eqnarray*}
&&\frac{(n+1)B_{m_{0,\lambda}}(n+1)}{B_{m_{0,\lambda}}(1)^{-(n+1)}}2^n\int_{S_{e_n}(\lambda K)}|\langle x,e_n\rangle|^ndG_n(x)\leq\cr
&\leq&\frac{\left(G_n(S_{e_n}(\lambda K)+C_{n-1})+G_{n-1}(P_{e_n^\perp}(\lambda K)+C_{n-1})\right)^{n+1}}{(G_{n-1}(P_{e_n^\perp}(\lambda K)))^n}.
\end{eqnarray*}
Therefore, taking into account that $S_{e_n}(\lambda K)=\lambda S_{e_n}(K)$ for any $\lambda>0$ and dividing both sides of the inequality by $\lambda^{2n}=\frac{\lambda^{n(n+1)}}{\lambda^{n(n-1)}}$, we have that for every $\lambda>0$
\begin{eqnarray*}
&&\frac{(n+1)B_{m_{0,\lambda}}(n+1)}{B_{m_{0,\lambda}}(1)^{-(n+1)}}2^n\frac{1}{\lambda^n}\int_{\lambda S_{e_n}(K)}|\langle \frac{x}{\lambda},e_n\rangle|^ndG_n(x)\leq\cr
&\leq&\frac{\left(\frac{G_n(\lambda S_{e_n}(K)+C_{n-1})}{\lambda^n}+\frac{1}{\lambda}\frac{G_{n-1}(P_{e_n^\perp}(\lambda K)+C_{n-1})}{\lambda^{n-1}}\right)^{n+1}}{\left(\frac{G_{n-1}(\lambda P_{e_n^\perp}(K))}{\lambda^{n-1}}\right)^n}.
\end{eqnarray*}
Taking the limit as $\lambda\to\infty$  and taking into account that $\displaystyle{\lim_{\lambda\to\infty}\max_{x\in S_{e_n}(\lambda K)\cap\Z^n}\langle x,e_n\rangle=\infty}$, and therefore $\displaystyle{\lim_{\lambda\to\infty}m_{0,\lambda}=\infty}$, we obtain, using Lemma \ref{lem:limit},
$$
\frac{(n+1){{2n}\choose{n-1}}}{n^{n+1}}2^n\int_{S_{e_n}(K)}|\langle x,e_n\rangle|^ndx\leq\frac{(\vol_n(S_{e_n}(K))+0)^{n+1}}{(\vol_{n-1}(P_{e_n^\perp}(K)))^n}=\frac{(\vol_n(K))^{n+1}}{(\vol_{n-1}(P_{e_n^\perp}(K)))^n},
$$
where we have also used \eqref{eq:ApproachingVolByDilationsWithG} and \eqref{eq:ApproachingIntegralWithG}, since the function $|\langle x,e_n\rangle|^n$ is Riemann-integrable on $S_{e_n}(K)$.

Equivalently,
$$
\frac{{{2n}\choose{n}}}{n^n}2^n\int_{S_{e_n}(K)}|\langle x,e_n\rangle|^ndx\leq\frac{(\vol_n(K))^{n+1}}{(\vol_{n-1}(P_{e_n^\perp}(K)))^n}.
$$
\end{proof}
\section{Ball bodies of the discrete covariogram}\label{sec:BallBodies}

In this section we initiate, for any convex body $K\subseteq\R^n$ with $0\in K$, the study of the $p$-th ball bodies of the discrete covariogram function $\tilde{g}_K:\R^n\to[0,\infty)$ given by
\begin{equation}\label{eq:tilegAndCharacteristicFunctions}
\tilde{g}_K(x)=G_n(K\cap(x+K))=\sum_{y\in K\cap\Z^n}\chi_{y-K}(x).
\end{equation}
Since $y-K$ is measurable for every $y\in\R^n$, as it is a compact set, $\tilde{g}_K$ is a measurable function. Besides, it satisfies that $\tilde{g}_K(0)=G_n(K)>0$, since $0\in K$. Therefore, we can consider, for $p>0$, the $p$-th Ball bodies of $\tilde{g_K}$, which are defined by \eqref{eq:Definition p-BallBodies} and are given by
\begin{eqnarray*}
K_p(\tilde{g}_K):&=&\left\{x\in\R^n\,:\,p\int_0^\infty r^{p-1}\tilde{g}_K(rx)dr\geq \tilde{g}_K(0)\right\}\cr
&=&\left\{x\in\R^n\,:\,p\int_0^\infty r^{p-1}G_n(K\cap(rx+K))dr\geq G_n(K)\right\}.
\end{eqnarray*}
The $p$-th ball bodies of $\tilde{g}_K$, $K_p(\tilde{g}_K)$, are not necessarily convex bodies. Nevertheless, they are star sets whose radial function, by \eqref{eq:RadialFunctionp-BallBodies}, is given, for any $\theta\in S^{n-1}$, by
$$
\rho_{K_p(\tilde{g}_K)}^p(\theta)=\frac{p}{G_n(K)}\int_0^\infty r^{p-1}G_n(K\cap(r\theta+K))dr.
$$

\begin{rmk}\label{rmk:OnePoint}
Let us point out that if $K\cap\Z^n=\{0\}$, then, by \eqref{eq:tilegAndCharacteristicFunctions}, $\tilde{g}_K=\chi_{-K}$ and then, by \eqref{eq:BallBodiesCharacteristicFunctions}, $K_p(\tilde{g}_K)=-K$ for every $p>0$.
\end{rmk}

We will also consider the discrete covariogram function of the open set $K+C_n$, $\tilde{g}_{K+C_n}:\R^n\to[0,\infty)$, given by
$$
\tilde{g}_{K+C_n}(x)=G_n((K+C_n)\cap(x+K+C_n))=\sum_{y\in (K+C_n)\cap\Z^n}\chi_{y-(K+C_n)}(x),
$$
which is also measurable since $y-(K+C_n)$ is measurable for every $y\in\R^n$, as $K+C_n$ is an open set, and satisfies that $\tilde{g}_{K+C_n}(0)>0$, since $0\in K\subseteq K+C_n$. The $p$-th Ball bodies of $\tilde{g}_{K+C_n}$ are given by
\begin{eqnarray*}
&&K_p(\tilde{g}_{K+C_n}):=\left\{x\in\R^n\,:\,p\int_0^\infty r^{p-1}\tilde{g}_{K+C_n}(rx)dr\geq \tilde{g}_{K+C_n}(0)\right\}\cr
&=&\left\{x\in\R^n\,:\,p\int_0^\infty r^{p-1}G_n((K+C_n)\cap(rx+K+C_n))dr\geq G_n(K+C_n)\right\},
\end{eqnarray*}
which are star sets with radial function given, for every $\theta\in S^{n-1}$, by
$$
\rho_{K_p(\tilde{g}_{K+C_n})}^p(\theta)=\frac{p}{G_n(K+C_n)}\int_0^\infty r^{p-1}G_n((K+C_n)\cap(r\theta+K+C_n))dr.
$$

Notice that, since $K\subseteq K+C_n$, then $\tilde{g}_K(x)\leq \tilde{g}_{K+C_n}(x)$ for every $x\in\R^n$. Thus, for every $\theta\in S^{n-1}$, we have
$$
G_n(K)\rho_{K_p(\tilde{g}_K)}^p(\theta)\leq G_n(K+C_n)\rho_{K_p(\tilde{g}_{K+C_n})}^p(\theta)
$$
and then
\begin{equation}\label{eq:inclusionBallBodiesDiscreteCovariogram}
K_p(\tilde{g}_K)\subseteq\left(\frac{G_n(K+C_n)}{G_n(K)}\right)^\frac{1}{p}K_p(\tilde{g}_{K+C_n}).
\end{equation}
The purpose of this section is to prove, on the one hand, that even though $K_p(\tilde{g}_K)$ is not necessarily convex, its convex hull is contained in the same dilation of $K_p(\tilde{g}_{K+C_n})$ that appears in \eqref{eq:inclusionBallBodiesDiscreteCovariogram} and, on the other hand, that even though we do not know whether an inclusion relation such as the one given by \eqref{eq:inclusionBallBodiesCovariogram} holds for the $p$-th Ball bodies of the discrete covariogram, a similar inclusion relation holds when substituting $K_p(\tilde{g}_K)$ by the dilation of $K_p(\tilde{g}_{K+C_n})$ given by the right hand side of \eqref{eq:inclusionBallBodiesDiscreteCovariogram}. More precisely, we will prove the following:
\begin{thm}\label{thm:InclusionBallBodiesDiscreteCovariogram}
Let $K\subseteq\R^n$ be a convex body such that $0\in K$. For any $0<p<q$ we have that
$$
{{n+q}\choose{n}}^\frac{1}{q}K_q(\tilde{g}_{K})\subseteq {{n+p}\choose{n}}^\frac{1}{p}\left(\frac{G_n(K+C_n)}{G_n(K)}\right)^\frac{1}{p}K_p(\tilde{g}_{K+C_n}).
$$
\end{thm}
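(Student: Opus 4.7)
The plan is to identify, for every $\theta\in S^{n-1}$, the radial functions of $K_q(\tilde g_K)$ and $K_p(\tilde g_{K+C_n})$ with lattice sums of the concave function $f_\theta(y)=\vol_1(K\cap\{y+\lambda\theta:\lambda\geq 0\})$ used in \cite{GZ}, and then to recognise the claimed inequality as an instance of the discrete Berwald inequality (Theorem \ref{thm:BerwaldDiscrete}).

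First I would prove a discrete analogue of Lemma \ref{lem:IdentitiesIntegral}. Starting from the identity $\tilde g_K(x)=\sum_{y\in K\cap\Z^n}\chi_K(y-x)$, interchanging the finite sum with the radial integral, and using the evenness of $\tilde g_K$ to replace $\theta$ by $-\theta$ inside the integral, for each $y\in K\cap\Z^n$ the inner integral equals $p\int_0^{f_\theta(y)}r^{p-1}\,dr=f_\theta(y)^p$ (since $\{r\geq 0:y+r\theta\in K\}=[0,f_\theta(y)]$), yielding
$$\rho^p_{K_p(\tilde g_K)}(\theta)=\frac{1}{G_n(K)}\sum_{y\in K\cap\Z^n}f_\theta(y)^p.$$
Repeating the computation with $K+C_n$ in place of $K$ and using $C_n=-C_n$ to rewrite the condition $y+r\theta\in K+C_n$ as $y+u+r\theta\in K$ for some $u\in C_n$, the analogous endpoint $\sup\{r\geq 0:y+r\theta\in K+C_n\}$ is identified with the $\diamond$-extension $f_\theta^\diamond$ from \eqref{eq:DefinitionDiamond}, so that
$$\rho^p_{K_p(\tilde g_{K+C_n})}(\theta)=\frac{1}{G_n(K+C_n)}\sum_{y\in(K+C_n)\cap\Z^n}(f_\theta^\diamond)(y)^p.$$

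Second, substituting these two identities into the pointwise inclusion and absorbing the dilation factor $(G_n(K+C_n)/G_n(K))^{1/p}$ against the $1/G_n(K+C_n)$ appearing in the formula for $\rho^p_{K_p(\tilde g_{K+C_n})}(\theta)$, the inclusion reduces to
$$\left(\frac{{n+q\choose n}}{G_n(K)}\sum_{y\in K\cap\Z^n}f_\theta(y)^q\right)^{1/q}\leq\left(\frac{{n+p\choose n}}{G_n(K)}\sum_{y\in(K+C_n)\cap\Z^n}(f_\theta^\diamond)(y)^p\right)^{1/p}$$
for every $\theta\in S^{n-1}$, which is precisely the conclusion of Theorem \ref{thm:BerwaldDiscrete} applied to the concave function $f_\theta$ on $K$ with exponents $0<p<q$.

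The main obstacle is the hypothesis of Theorem \ref{thm:BerwaldDiscrete} that $f$ attain its maximum at the origin: $f_\theta$ is maximised on the face of $K$ on which $\langle\cdot,\theta\rangle$ is smallest, a condition that need not be satisfied merely because $0\in K$. Since $\tilde g_K$, and hence every Ball body $K_p(\tilde g_K)$, is altered by non-integer translations of $K$, one cannot simply translate $K$ to reposition the maximiser of $f_\theta$ at the origin. I would address this either by invoking a strengthening of Theorem \ref{thm:BerwaldDiscrete} allowing the maximum to be attained at an arbitrary point of $K$, or by a careful reduction based on shifting by a suitably chosen lattice vector and comparing the resulting lattice sums on $K$ and on $K+C_n$; this is the most delicate step of the argument.
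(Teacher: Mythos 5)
There is a genuine gap at the decisive step. Your reduction hinges on applying Theorem \ref{thm:BerwaldDiscrete} to the chord function $f_\theta$ (really $f_{-\theta}$, see below) on $K$, but that theorem requires the concave function to attain its maximum at the origin, and $f_\theta$ attains its maximum at an endpoint of a maximal chord of $K$ in direction $\theta$ --- a point that depends on $\theta$ and has no reason to be $0$, or even a lattice point (a lattice translation is the only repositioning compatible with $\tilde{g}_K$). You flag this yourself but leave it unresolved, and neither of your proposed fixes is available: no version of Theorem \ref{thm:BerwaldDiscrete} without the max-at-$0$ hypothesis is proved in the paper (its proof anchors the comparison function and the discrete Brunn--Minkowski argument at the maximizer, which must be a lattice point), and no lattice shift can work simultaneously for all $\theta$. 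The paper's proof sidesteps this entirely by running the Berwald-type crossing-point argument directly on the one-dimensional functions $r\mapsto\tilde{g}_K(r\theta)$ and $r\mapsto\tilde{g}_{K+C_n}(r\theta)$ (Lemmas \ref{lem:r_0} and the proof of Theorem \ref{thm:InclusionBallBodiesDiscreteCovariogram}): the discrete covariogram automatically peaks at $0$, with $\tilde{g}_K(0)=G_n(K)$, and the discrete Brunn--Minkowski inequality supplies the required $\frac{1}{n}$-concavity estimate along each ray. That is the idea your reduction is missing.

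Two further inaccuracies in your first step, which are repairable but should be noted. First, $\tilde{g}_K$ is \emph{not} even: $G_n$ is not invariant under non-integer translations (e.g.\ $K=[0,\tfrac12]\subseteq\R$ gives $\tilde{g}_K=\chi_{[-1/2,0]}$), so you cannot replace $\theta$ by $-\theta$; the correct identity is $\rho^p_{K_p(\tilde{g}_K)}(\theta)=\frac{1}{G_n(K)}\sum_{y\in K\cap\Z^n}f_{-\theta}(y)^p$, and one should simply work with $f_{-\theta}$ throughout. Second, $\sup\{r\geq0:\,y-r\theta\in K+C_n\}$ is \emph{not} equal to $f_{-\theta}^\diamond(y)$: the former measures the extent of $K+C_n$ along the ray, which exceeds $\sup_{u\in C_n}\overline{f_{-\theta}}(y+u)$ in general (again $K=[0,\tfrac12]$, $y=1$ gives $2$ versus $\tfrac12$). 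One does have $\sup\{r\geq0:\,y-r\theta\in K+C_n\}\geq f_{-\theta}^\diamond(y)$, so the inequality you need would still follow from the discrete Berwald inequality if it applied --- but it should be stated as an inequality, not an identity, and in any case the main obstruction above remains.
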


Let us start by showing that the convex hull of $K_p(\tilde{g}_K)$ is contained in in the same dilation of $K_p(\tilde{g}_{K+C_n})$ that appears in \eqref{eq:inclusionBallBodiesDiscreteCovariogram}. The proof will follow the lines of the proof of the convexity of the $p$-th Ball bodies of log-concave function (see \cite[Theorem 2.5.5]{BGVV}), relying on \cite[Theorem 2.5.4]{BGVV}, which gives a lower bound for the integral of a function $h$ on $[0,\infty)$ in terms of some mean of the integrals of two functions $w$ and $g$ on $[0,\infty)$, provided that for any $r,s>0$ the function $h$, evaluated at the same mean of $r$ and $s$ is bounded below by some geometric mean of $w(r)$ and $g(s)$. However, some modifications in the proof will be induced by the necessity of adding the open cube $C_n$ to the set in the left-hand side in the discrete Brunn-Minkowski inequality (Theorem \ref{thm: BM_lattice_point_no_G(K)G(L)>0}).

\begin{proposition}\label{prop:ConvexityBallBodies}
Let $K\subseteq\R^n$ be a convex body with $0\in K$. Then for any $p>0$
$$
\textrm{conv}(K_p(\tilde{g}_K))\subseteq\left(\frac{G_n(K+C_n)}{G_n(K)}\right)^\frac{1}{p}K_p(\tilde{g}_{K+C_n}).
$$
\end{proposition}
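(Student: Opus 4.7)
The plan is to mimic the proof of the convexity of the $p$-th Ball bodies of integrable log-concave functions (Theorem 2.5.5 of \cite{BGVV}), substituting the discrete Brunn-Minkowski inequality (Theorem \ref{thm: BM_lattice_point_no_G(K)G(L)>0}) for the classical one. The shift by $C_n$ on the left-hand side of the discrete Brunn-Minkowski inequality is precisely what forces the inflation of $K$ to $K+C_n$ on the right-hand side of the inclusion, and what produces the normalization factor $C:=(G_n(K+C_n)/G_n(K))^{1/p}$.

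The natural starting point is the pairwise case. Given $x,y\in K_p(\tilde{g}_K)$ and $\alpha\in(0,1)$, set $z:=(1-\alpha)x+\alpha y$. After the change of variables $r\mapsto Cr$ in the integral defining $K_p(\tilde{g}_{K+C_n})$, the claim $z\in C\cdot K_p(\tilde{g}_{K+C_n})$ reduces to the integral inequality
\[
p\int_0^\infty r^{p-1}\tilde{g}_{K+C_n}(rz)\,dr\geq G_n(K).
\]
For the core of the argument, fix $r,s>0$ and set $t:=\bigl((1-\alpha)/r+\alpha/s\bigr)^{-1}$ and $\mu:=\alpha r/(\alpha r+(1-\alpha)s)$, so that $(1-\mu)rx+\mu sy=tz$. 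Apply Theorem \ref{thm: BM_lattice_point_no_G(K)G(L)>0} to $A:=K\cap(rx+K)$ and $B:=K\cap(sy+K)$; convexity of $K$ gives $(1-\mu)A+\mu B+C_n\subseteq (K+C_n)\cap(tz+(K+C_n))$, so
\[
\tilde{g}_{K+C_n}(tz)^{1/n}\geq(1-\mu)\tilde{g}_K(rx)^{1/n}+\mu\tilde{g}_K(sy)^{1/n},
\]
and AM-GM, upon raising to the $n$-th power, upgrades this to the pointwise geometric-mean bound
\[
\tilde{g}_{K+C_n}(tz)\geq\tilde{g}_K(rx)^{1-\mu}\tilde{g}_K(sy)^{\mu}.
\]

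The final ingredient is Theorem 2.5.4 of \cite{BGVV}, whose hypothesis is precisely such a pointwise inequality with the (harmonic) mean of $r,s$ on the argument of the left-hand side and a geometric mean on the right. Its conclusion delivers
\[
p\int_0^\infty r^{p-1}\tilde{g}_{K+C_n}(rz)\,dr\geq\Bigl(p\int_0^\infty r^{p-1}\tilde{g}_K(rx)\,dr\Bigr)^{1-\alpha}\Bigl(p\int_0^\infty r^{p-1}\tilde{g}_K(ry)\,dr\Bigr)^\alpha,
\]
and since both factors on the right are $\geq G_n(K)$ by the hypothesis $x,y\in K_p(\tilde{g}_K)$, the pairwise case is complete.

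The main obstacle lies in extending this pairwise inclusion to arbitrary convex combinations of points in $K_p(\tilde{g}_K)$, as required to cover the full convex hull. The natural route is an $m$-fold discrete Brunn-Minkowski inequality coupled with a multi-variable version of Theorem 2.5.4 of \cite{BGVV}. However, naive iteration of Theorem \ref{thm: BM_lattice_point_no_G(K)G(L)>0} yields a cumulative $(m-1)C_n$-correction on the left-hand side rather than the single $C_n$ appearing in the stated conclusion. Obtaining the inclusion with the precise normalization $(G_n(K+C_n)/G_n(K))^{1/p}$ therefore requires either a genuinely multi-set discrete Brunn-Minkowski inequality with a single cube correction, or a careful bookkeeping argument that prevents the $C_n$-summands from accumulating as one passes from pairs to $(n+1)$-fold convex combinations (via Carath\'eodory's theorem).
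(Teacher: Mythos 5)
Your two-point argument coincides, step for step, with the paper's proof: the same dilated target set $\bigl(G_n(K+C_n)/G_n(K)\bigr)^{1/p}K_p(\tilde g_{K+C_n})$, the same inclusion $(1-\mu)\bigl(K\cap(rx+K)\bigr)+\mu\bigl(K\cap(sy+K)\bigr)+C_n\subseteq(K+C_n)\cap\bigl(tz+K+C_n\bigr)$, the same application of Theorem \ref{thm: BM_lattice_point_no_G(K)G(L)>0} followed by the arithmetic--geometric mean inequality, and the same invocation of \cite[Theorem 2.5.4]{BGVV}. (Minor point: that theorem returns the $(-1/p)$-mean of the two integrals, not their geometric mean; this is immaterial here, since either mean dominates the minimum of the two integrals, which is at least $G_n(K)$.)

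The obstacle you flag at the end is genuine, and you should know that the paper's own proof does not address it: it also stops after showing $\lambda x+\mu y\in\bigl(G_n(K+C_n)/G_n(K)\bigr)^{1/p}K_p(\tilde g_{K+C_n})$ for $x,y\in K_p(\tilde g_K)$. Because the target is only known to be a star set with center $0$ (its convexity is exactly what is in question), two-point combinations do not suffice when $n\geq3$: for a star set $S$ with center $0$ the set of two-point convex combinations equals $\bigcup_{s_1,s_2\in S}\mathrm{conv}\{0,s_1,s_2\}$, which can be a proper subset of $\mathrm{conv}(S)$ (take $S$ to be the union of four segments from $0$ to the vertices of a tetrahedron in $\R^3$: the union of triangles is two-dimensional while the convex hull is solid). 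As you observe, iterating either the two-point inclusion or Theorem \ref{thm: BM_lattice_point_no_G(K)G(L)>0} accumulates copies of $C_n$, and an $m$-fold discrete Brunn--Minkowski inequality with a single cube does not follow formally from the two-set version. A complete proof therefore needs either such an $m$-fold inequality together with an $m$-variable analogue of \cite[Theorem 2.5.4]{BGVV}, or a direct argument that the dilated set on the right-hand side is convex. In short, your proposal reproduces everything the paper actually proves, and the difficulty you identify is a gap in the paper's argument rather than a shortcoming specific to your approach.
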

\begin{proof}
First of all, let us define $\tilde{K}_p(\tilde{g}_{K+C_n})$ as the following set:
\begin{eqnarray*}
&&\tilde{K}_p(\tilde{g}_{K+C_n})=\left\{x\in\R^n\,:\,p\int_0^\infty r^{p-1}\tilde{g}_{K+C_n}(rx)dr\geq \tilde{g}_K(0)\right\}\cr
&=&\left\{x\in\R^n\,:\,p\int_0^\infty r^{p-1}G_n((K+C_n)\cap(rx+K+C_n))dr\geq G_n(K)\right\}.
\end{eqnarray*}
Notice that since the right-hand side in the inequality defining this set is $\tilde{g}_K(0)$ rather than $\tilde{g}_{K+C_n}(0)$, this set is not $K_p(\tilde{g}_{K+C_n})$ but a dilation of it. Indeed, for any $x\in\R^n$ and any $\lambda>0$
$$
p\int_0^\infty r^{p-1}\tilde{g}_{K+C_n}(\lambda r x)dr=\frac{p}{\lambda^p}\int_0^\infty s^{p-1}\tilde{g}_{K+C_n}(sx)ds
$$
and then, if $0<\lambda\leq 1$,
$$
p\int_0^\infty r^{p-1}\tilde{g}_{K+C_n}(\lambda r x)dr\geq p\int_0^\infty r^{p-1}\tilde{g}_{K+C_n}(rx)dr.
$$
Thus, if $x\in \tilde{K}_p(\tilde{g}_{K+C_n})$, we have that also $\lambda x\in \tilde{K}_p(\tilde{g}_{K+C_n})$. Therefore $\tilde{K}_p(\tilde{g}_{K+C_n})$ is a star set with $0$ as a center. Moreover, as in \eqref{eq:RadialFunctionp-BallBodies}, for any $\theta\in S^{n-1}$
\begin{eqnarray*}
\rho_{\tilde{K}_p(\tilde{g}_{K+C_n})}(\theta)&=&\sup\left\{\lambda>0\,:\,p\int_0^\infty r^{p-1}\tilde{g}_{K+C_n}(\lambda r\theta)dr\geq \tilde{g}_K(0)\right\}\cr
&=&\sup\left\{\lambda>0\,:\,\frac{p}{\lambda^p}\int_0^\infty s^{p-1}\tilde{g}_{K+C_n}(s\theta)ds\geq \tilde{g}_K(0)\right\}\cr
&=&\left(\frac{p}{\tilde{g}_K(0)}\int_0^\infty s^{p-1}\tilde{g}_{K+C_n}(s\theta)ds\right)^\frac{1}{p}\cr
&=&\left(\frac{p}{G_n(K)}\int_0^\infty r^{p-1}G_n((K+C_n)\cap(r\theta+K+C_n))dr\right)^\frac{1}{p}\cr
&=&\left(\frac{G_n(K+C_n)}{G_n(K)}\right)^\frac{1}{p}\rho_{K_p(\tilde{g}_{K+C_n})}(\theta).
\end{eqnarray*}
Therefore, $\tilde{K}_p(\tilde{g}_{K+C_n})$ is the dilation of $K_p(\tilde{g}_{K+C_n})$ in the right-hand side of \eqref{eq:inclusionBallBodiesDiscreteCovariogram}:
$$
\tilde{K}_p(\tilde{g}_{K+C_n})=\left(\frac{G_n(K+C_n)}{G_n(K)}\right)^{\frac{1}{p}}K_p(\tilde{g}_{K+C_n}).
$$
Let $x,y\in K_p(\tilde{g}_K)$, $\lambda,\mu\in[0,1]$ such that $\lambda+\mu=1$, and $\gamma=\frac{1}{p}$. Let us define the functions
\begin{itemize}
\item $h(t)=\tilde{g}_{K+C_n}(t^\gamma(\lambda x+\mu y))$, $t> 0$,
\item $w(r)=\tilde{g}_K(r^\gamma x)$, $r> 0$,
\item $g(s)=\tilde{g}_K(s^\gamma y)$, $s>0$.
\end{itemize}
Denoting by $M_{-\gamma}^\lambda(r,s)$ for any $r,s>0$ the number
$$
M_{-\gamma}^\lambda(r,s)=(\lambda r^{-\gamma}+\mu s^{-\gamma})^{-\frac{1}{\gamma}},
$$
our purpose is to show that for any $r,s>0$ we have that
\begin{equation}\label{eq:inequalityMeans}
h(M_{-\gamma}^\lambda(r,s))\geq w(r)^{\frac{\lambda s^\gamma}{\lambda s^\gamma+\mu r^\gamma}}g(s)^{\frac{\mu r^\gamma}{\lambda s^\gamma+\mu r^\gamma}}
\end{equation}
in order to apply \cite[Theorem 2.5.4]{BGVV} and obtain
\begin{equation}\label{eq:InequalityIntegrals}
\int_0^\infty h(t)dt\geq M_{-\gamma}^\lambda\left(\int_0^\infty w(r)dr,\int_0^\infty g(s)ds\right).
\end{equation}

If $K\cap(r^\gamma x+K)=\emptyset$ or $K\cap(s^\gamma y+K)=\emptyset$ then $w(r)=0$ or $g(s)=0$ and inequality \eqref{eq:inequalityMeans} is trivial. Otherwise, calling
\begin{itemize}
\item $\lambda_1=\frac{\lambda s^\gamma}{\lambda s^\gamma+\mu r^\gamma}$
\item $\mu_1=\frac{\mu r^\gamma}{\lambda s^\gamma+\mu r^\gamma}$
\end{itemize}
we have that $\lambda_1+\mu_1=1$ and, since $K$ is convex,
\begin{eqnarray*}
&&(K+C_n)\cap(\lambda_1r^\gamma x+\mu_1s^\gamma y+K+C_n)\supseteq\cr
&\supseteq& \lambda_1(K\cap(r^\gamma x+K))+\mu_1(K\cap(s^\gamma y+K))+C_n
\end{eqnarray*}
and then, by the discrete Brunn-Minkowski inequality (Theorem \ref{thm: BM_lattice_point_no_G(K)G(L)>0}),
\begin{eqnarray*}
&&G_n((K+C_n)\cap(\lambda_1r^\gamma x+\mu_1s^\gamma y+K+C_n))\geq\cr
&\geq& G_n(\lambda_1(K\cap(r^\gamma x+K))+\mu_1(K\cap(s^\gamma y+K))+C_n)\cr
&\geq&\left(\lambda_1G_n^\frac{1}{n}(K\cap(r^\gamma x+K))+\mu_1G_n^\frac{1}{n}(K\cap(s^\gamma y+K))\right)^n\cr
&\geq&G_n^{\lambda_1}(K\cap(r^\gamma x+K))G_n^{\mu_1}(K\cap(s^\gamma y+K)).
\end{eqnarray*}
Therefore,
\begin{eqnarray*}
h(M_{-\gamma}^\lambda(r,s))&=&\tilde{g}_{K+C_n}\left(\frac{1}{\lambda r^{-\gamma}+\mu s^{-\gamma}}(\lambda x+\mu y)\right)\cr
&=&\tilde{g}_{K+C_n}\left(\frac{\lambda s^\gamma}{\lambda s^\gamma+\mu r^\gamma}r^\gamma x+\frac{\mu r^\gamma}{\lambda s^\gamma+\mu r^\gamma}s^\gamma y\right)\cr
&=&\tilde{g}_{K+C_n}\left(\lambda_1r^\gamma x+\mu_1s^\gamma y\right)\cr
&\geq&\tilde{g}_K^{\lambda_1}(r^\gamma x)\tilde{g}_K^{\mu_1}(s^\gamma y)\cr
&=&\tilde{g}_K(r^\gamma x)^{\frac{\lambda s^\gamma}{\lambda s^\gamma+\mu r^\gamma}}\tilde{g}_K(s^\gamma y)^{\frac{\mu r^\gamma}{\lambda s^\gamma+\mu r^\gamma}}\cr
&=&w(r)^{\frac{\lambda s^\gamma}{\lambda s^\gamma+\mu r^\gamma}}g(s)^{\frac{\mu r^\gamma}{\lambda s^\gamma+\mu r^\gamma}}.\cr
\end{eqnarray*}

Thus, by \cite[Theorem 2.5.4]{BGVV} we have \eqref{eq:InequalityIntegrals}. Equivalently, taking into account that $-\gamma<0$,
\begin{eqnarray*}
\left(\int_0^\infty \tilde{g}_{K+C_n}(t^\gamma(\lambda x+\mu y))dt\right)^{-\gamma}&\leq& \lambda\left(\int_0^\infty \tilde{g}_K(r^\gamma x)dr\right)^{-\gamma}\cr
&+&\mu\left(\int_0^\infty \tilde{g}_K(s^\gamma y)ds\right)^{-\gamma}.
\end{eqnarray*}
Changing variables, using that $\gamma=\frac{1}{p}$, and taking into account that $x,y\in K_p(\tilde{g}_K)$,
\begin{eqnarray*}
\left(\int_0^\infty pt^{p-1}\tilde{g}_{K+C_n}(t(\lambda x+\mu y))dt\right)^{-\frac{1}{p}}&\leq& \lambda\left(\int_0^\infty pr^{p-1}\tilde{g}_K(r x)dr\right)^{-\frac{1}{p}}\cr
&+&\mu\left(\int_0^\infty ps^{p-1}\tilde{g}_K(s y)ds\right)^{-\frac{1}{p}}\cr
&\leq&\lambda G_n(K)^{-\frac{1}{p}}+\mu G_n(K)^{-\frac{1}{p}}=G_n(K)^{-\frac{1}{p}}.
\end{eqnarray*}
Therefore
$$
\int_0^\infty pt^{p-1}\tilde{g}_{K+C_n}(t(\lambda x+\mu y))dt\geq G_n(K)
$$
and
$$
\lambda x+\mu y\in \tilde{K}_p(\tilde{g}_{K+C_n})=\left(\frac{G_n(K+C_n)}{G_n(K)}\right)^\frac{1}{p}K_p(\tilde{g}_{K+C_n}).
$$
\end{proof}

\begin{rmk}
Let us point out that a reverse inclusion of $K_p(\tilde{g}_{K+C_n})$ in a dilation of the convex hull of $\tilde{K}_p(\tilde{g}_K)$ is not possible as the following example shows: Consider $K=\textrm{conv}\{(0,0),\left(\frac{1}{2},1\right),\left(\frac{1}{2},-1\right)\}\subseteq\R^2$. Since $K\cap\Z^2=\{(0,0)\}$, by Remark \ref{rmk:OnePoint}, for any $p>0$ we have that $K_p(\tilde{g}_K)=-K$
and then
$$
\textrm{conv}(K_p(\tilde{g}_K))=-K\subseteq \{x\in\R^2\,:\,\langle x,e_2\rangle\leq0\}.
$$
Therefore, if $K_p(\tilde{g}_{K+C_2})$ is contained in a dilation of $\textrm{conv}(K_p(\tilde{g}_K))$, necessarily $\rho_{K_p(\tilde{g}_{K+C_2})}(e_2)=0$. However,
$$
(0,0)\in (K+C_2)\cap (re_2+K+C_2)\quad\textrm{for every }r\in[0,1)
$$
and then $\tilde{g}_{K+C_n}(re_2)>0$ for every $r\in[0,1)$. Thus, $\rho_{K_p(\tilde{g}_{K+C_2})}(e_2)>0$ and $K_p(\tilde{g}_{K+C_2})$ is not contained in any dilation of $\textrm{conv}(K_p(\tilde{g}_K))$.
\end{rmk}

The following lemma shows that even though it is defined from the discrete covariogram, the volume of $K_n(\tilde{g}_K)$ equals the volume of $K$.

\begin{lemma}\label{lem:Volume K_n(gK)}
Let $K\subseteq\R^n$ be a convex body with $0\in K$. Let $h:\R^n\to[0,\infty)$ be a homogeneous function of degree $p\geq0$. Then
$$
\int_{K_{n+p}(\tilde{g}_K)}h(x)dx=\frac{1}{G_n(K)}\int_{\R^n}h(x)G_n(K\cap(x+K))dx.
$$
In particular,
$$
\vol_n(K_n(\tilde{g}_K))=\vol_n(K).
$$
\end{lemma}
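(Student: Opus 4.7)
The plan is to integrate in polar coordinates, using the homogeneity of $h$ to absorb the radial variable in just the right way so that the outer radial integral reproduces exactly the defining expression for the radial function of $K_{n+p}(\tilde{g}_K)$. Recall from \eqref{eq:RadialFunctionp-BallBodies} applied to $g=\tilde{g}_K$ that for every $\theta\in S^{n-1}$
$$
\rho_{K_{n+p}(\tilde{g}_K)}^{n+p}(\theta)=\frac{n+p}{G_n(K)}\int_0^\infty r^{n+p-1}G_n(K\cap(r\theta+K))dr.
$$

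First, I would write the integral over $K_{n+p}(\tilde{g}_K)$ in polar coordinates, noting that this set is star-shaped with center $0$:
$$
\int_{K_{n+p}(\tilde{g}_K)}h(x)dx=\int_{S^{n-1}}\int_0^{\rho_{K_{n+p}(\tilde{g}_K)}(\theta)}h(r\theta)r^{n-1}drd\theta.
$$
Using that $h$ is homogeneous of degree $p$, we have $h(r\theta)=r^p h(\theta)$, so the inner integral equals $h(\theta)\rho_{K_{n+p}(\tilde{g}_K)}^{n+p}(\theta)/(n+p)$. Substituting the formula for the radial function above cancels the factor $n+p$, yielding
$$
\int_{K_{n+p}(\tilde{g}_K)}h(x)dx=\frac{1}{G_n(K)}\int_{S^{n-1}}h(\theta)\int_0^\infty r^{n+p-1}G_n(K\cap(r\theta+K))drd\theta.
$$
Now I would reassemble polar coordinates in the opposite direction: since $r^{n+p-1}h(\theta)=h(r\theta)r^{n-1}$, the last double integral is exactly $\int_{\R^n}h(x)G_n(K\cap(x+K))dx/G_n(K)$, which gives the first claim.

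For the ``in particular'' statement, I would take $h\equiv 1$ (homogeneous of degree $0$), so the formula with $p=0$ gives
$$
\vol_n(K_n(\tilde{g}_K))=\frac{1}{G_n(K)}\int_{\R^n}G_n(K\cap(x+K))dx.
$$
Using the representation $\tilde{g}_K(x)=\sum_{y\in K\cap\Z^n}\chi_{y-K}(x)$ from \eqref{eq:tilegAndCharacteristicFunctions} and Fubini (exactly as in the analogous continuous computation \eqref{eq:IntegralCovariogram}), the integral equals $\sum_{y\in K\cap\Z^n}\vol_n(y-K)=G_n(K)\vol_n(K)$, which cancels the prefactor and leaves $\vol_n(K)$, as required.

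There is no serious obstacle here; the only thing to be careful about is the boundary case $p=0$, where homogeneity of degree $0$ means $h$ is constant on rays (and in the particular case just the constant function), so both changes of variables and the identity $h(r\theta)r^{n-1}=r^{n+p-1}h(\theta)$ remain valid verbatim. All interchanges of sum and integral are justified by non-negativity of the integrand through Tonelli.
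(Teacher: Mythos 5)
Your proposal is correct and follows essentially the same route as the paper's proof: polar coordinates, homogeneity of $h$ to reduce the inner integral to $\rho_{K_{n+p}(\tilde{g}_K)}^{n+p}(\theta)h(\theta)/(n+p)$, substitution of the radial-function formula, reassembly in polar coordinates, and Fubini with the representation $\tilde{g}_K(x)=\sum_{y\in K\cap\Z^n}\chi_{y-K}(x)$ for the particular case. The only difference is the normalization convention for the spherical measure, which is immaterial.
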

\begin{proof}
Integrating in polar coordinates we have that
\begin{eqnarray*}
&&\int_{K_{n+p}(\tilde{g}_K)}h(x)dx=n\vol_n(B_2^n)\int_{S^{n-1}}\int_0^{\rho_{K_{n+p}(\tilde{g}_K)}(\theta)}r^{n-1}h(r\theta)drd\sigma(\theta)\cr
&=&n\vol_n(B_2^n)\int_{S^{n-1}}\int_0^{\rho_{K_{n+p}(\tilde{g}_K)}(\theta)}r^{n+p-1}h(\theta)drd\sigma(\theta)\cr
&=&\frac{n}{n+p}\vol_n(B_2^n)\int_{S^{n-1}}\rho_{K_{n+p}(\tilde{g}_K)}^{n+p}(\theta)h(\theta)d\sigma(\theta)\cr
&=&\frac{n\vol_n(B_2^n)}{G_n(K)}\int_{S^{n-1}}\int_0^\infty r^{n+p-1}G_n(K\cap(r\theta+K))h(\theta)drd\sigma(\theta)\cr
&=&\frac{n\vol_n(B_2^n)}{G_n(K)}\int_{S^{n-1}}\int_0^\infty r^{n-1}G_n(K\cap(r\theta+K))h(r\theta)drd\sigma(\theta)\cr
&=&\frac{1}{G_n(K)}\int_{\R^n}h(x)G_n(K\cap(x+K))dx.
\end{eqnarray*}

Notice that if $h(x)=1$, which is homogeneous of degree $0$, we have that
\begin{eqnarray*}
\vol_n(K_n(\tilde{g}_K))&=&\frac{1}{G_n(K)}\int_{\R^n}G_n(K\cap(x+K))dx\cr
&=&\frac{1}{G_n(K)}\int_{\R^n}\sum_{y\in \Z^n}\chi_{K}(y)\chi_{x+K}(y)dx\cr
&=&\frac{1}{G_n(K)}\sum_{y\in \Z^n}\int_{\R^n}\chi_{K}(y)\chi_{y-K}(x)dx\cr
&=&\frac{1}{G_n(K)}\sum_{y\in \Z^n}\chi_{K}(y)\vol_n(K)=\vol_n(K).
\end{eqnarray*}
\end{proof}

Let us now prove the inclusion relation given in Theorem \ref{thm:InclusionBallBodiesDiscreteCovariogram}. The proof will follow the lines of the proof of Theorem \ref{thm:BerwaldDiscrete}. We begin with the following lemma in which, for any $p>0$ and any $\theta\in S^{n-1}$, we construct a $\frac{1}{n}$-affine function on its support, $g_{p,\theta}$, with the property that for any $q>0$
$$
\left(\frac{{{n+q}\choose{n}}}{G_n(K)}\int_0^\infty qr^{q-1}g_{p,\theta}(r)dr\right)^\frac{1}{q}=m_{p,\theta},
$$
where
$$
m_{p,\theta}:={{n+p}\choose{n}}^\frac{1}{p}\rho_{\tilde{K}_p(\tilde{g}_{K+C_n})}(\theta),
$$
being $\tilde{K}_p(\tilde{g}_{K+C_n})$ the dilation of $K_p(\tilde{g}_{K+C_n})$ defined in Proposition \ref{prop:ConvexityBallBodies}. Such function has a crossing point $r_0(p,\theta)$ (see Lemma \ref{lem:r_0} for the precise definition of such crossing point) with a modification of the function $\tilde{g}_K$.

\begin{lemma}\label{lem:r_0}
Let $K\subseteq\R^n$ be a convex body such that $0\in K$ and let $\theta\in S^{n-1}$. For any $p>0$, let
$$
m_{p,\theta}:=\left(\frac{{{n+p}\choose{n}}}{G_n(K)}\int_0^\infty pr^{p-1}\tilde{g}_{K+C_n}(r\theta)dr\right)^\frac{1}{p}
$$
and $g_{p,\theta}:[0,\infty)\to[0,\infty)$ be the function given by
$$
g_{p,\theta}(r)=\begin{cases}
\left(1-\frac{r}{m_{p,\theta}}\right)^nG_n(K)&\textrm{ if }0\leq r\leq m_{p,\theta}\\
0 &\textrm{ otherwise}.
\end{cases}
$$
Then, there exists $r_0(p,\theta)\in[0,\infty)$ such that
$$
\tilde{g}_{K+C_n}(r\theta)\geq g_{p,\theta}(r)
$$
for every $0\leq r<r_0(p,\theta)$ and
$$
g_{p,\theta}(r)\geq \tilde{g}_K(r\theta)
$$
for every $r\geq r_0(p,\theta)$.
\end{lemma}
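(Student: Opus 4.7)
The plan mirrors the proof of Lemma \ref{lem:existenceDiscreter0}. First I would define
$$r_0(p,\theta):=\inf\{r\geq 0\,:\,\tilde{g}_{K+C_n}(r\theta)\leq g_{p,\theta}(r)\},$$
which is a number in $[0,\infty)$ because $\tilde{g}_{K+C_n}(r\theta)=0$ for every $r$ larger than the width of the difference body $(K+C_n)-(K+C_n)$ in direction $\theta$, so the defining set is non-empty. For every $r\in[0,r_0(p,\theta))$ the infimum property immediately gives $\tilde{g}_{K+C_n}(r\theta)>g_{p,\theta}(r)$, proving the first claimed inequality.

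For the second inequality, the idea is to apply the discrete Brunn--Minkowski inequality (Theorem \ref{thm: BM_lattice_point_no_G(K)G(L)>0}) transversally along the ray $r\theta$. Set $\lambda=1-r_0(p,\theta)/r$, so that $r_0(p,\theta)=\lambda\cdot 0+(1-\lambda)r$. The convexity of $K$ yields the inclusion $K\cap(r_0(p,\theta)\theta+K)\supseteq\lambda K+(1-\lambda)(K\cap(r\theta+K))$; adding $C_n$ and invoking Theorem \ref{thm: BM_lattice_point_no_G(K)G(L)>0} gives
$$\tilde{g}_{K+C_n}(r_0(p,\theta)\theta)^{\frac{1}{n}}\geq\left(1-\frac{r_0(p,\theta)}{r}\right)G_n(K)^{\frac{1}{n}}+\frac{r_0(p,\theta)}{r}\tilde{g}_K(r\theta)^{\frac{1}{n}}.$$
Combining this with the bound $\tilde{g}_{K+C_n}(r_0(p,\theta)\theta)\leq g_{p,\theta}(r_0(p,\theta))$ (justified in the last paragraph) and rearranging, one obtains $\tilde{g}_K(r\theta)^{1/n}\leq(1-r/m_{p,\theta})G_n(K)^{1/n}=g_{p,\theta}(r)^{1/n}$ when $r\leq m_{p,\theta}$, while $r>m_{p,\theta}$ forces $\tilde{g}_K(r\theta)=0=g_{p,\theta}(r)$ because the right-hand side of the rearrangement becomes negative.

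The main obstacle is the boundary inequality $\tilde{g}_{K+C_n}(r_0(p,\theta)\theta)\leq g_{p,\theta}(r_0(p,\theta))$, because $\tilde{g}_{K+C_n}(\cdot\theta)$ is in general neither continuous nor monotone along the ray, in contrast to the cross-section function treated in Lemma \ref{lem:existenceDiscreter0}. However, since $K+C_n$ is open, for each fixed $y\in\Z^n$ the condition $y\in(K+C_n)\cap(r\theta+K+C_n)$ is equivalent to $y-r\theta\in K+C_n$, which is an open condition on $r$; hence each indicator is lower semicontinuous in $r$, and summing over $y\in\Z^n$ shows that $r\mapsto\tilde{g}_{K+C_n}(r\theta)$ is lower semicontinuous. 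Selecting a sequence $r_k$ in the defining set with $r_k\to r_0(p,\theta)$ and using continuity of $g_{p,\theta}$ then yields
$$\tilde{g}_{K+C_n}(r_0(p,\theta)\theta)\leq\liminf_{k}\tilde{g}_{K+C_n}(r_k\theta)\leq\lim_k g_{p,\theta}(r_k)=g_{p,\theta}(r_0(p,\theta)).$$
The corner case $r_0(p,\theta)=0$ trivialises the first inequality; the second then amounts to running the Brunn--Minkowski estimate above with a sequence $r_k\downarrow 0$ extracted from the defining set in place of $r_0(p,\theta)$, and passing to the limit after dividing by $r_k/r$.
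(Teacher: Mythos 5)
Your strategy is the paper's: define $r_0(p,\theta)$ as the infimum of $\{r\ge 0:\tilde{g}_{K+C_n}(r\theta)\le g_{p,\theta}(r)\}$, get the first inequality for free, transfer the bound to $r=r_0$ by a one-sided continuity argument, and propagate it forward with the discrete Brunn--Minkowski inequality. Your lower-semicontinuity justification of the boundary inequality is correct (and, incidentally, $r\mapsto\tilde{g}_{K+C_n}(r\theta)$ \emph{is} non-increasing, since $L\cap(r_2\theta+L)\subseteq L\cap(r_1\theta+L)$ for $r_1<r_2$ by convexity, contrary to your aside; the paper uses exactly this monotonicity plus right-continuity). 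Your handling of $r>m_{p,\theta}$ by letting the rearrangement turn negative replaces the paper's preliminary step $m_{p,\theta}\ge M_\theta$; it works, but note that the identity $g_{p,\theta}(r_0)^{1/n}=(1-r_0/m_{p,\theta})G_n(K)^{1/n}$ that you rearrange tacitly assumes $r_0\le m_{p,\theta}$, a sub-case you should dispatch (e.g.\ if $r_0>m_{p,\theta}$ the boundary inequality forces $\tilde{g}_{K+C_n}(r_0\theta)=0$ and monotonicity finishes).

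The genuine gap is the corner case $r_0(p,\theta)=0$. Your fix requires a sequence $r_k\downarrow 0$ with $r_k>0$ inside the defining set, but such a sequence need not exist: $0$ belongs to the set exactly when $G_n(K+C_n)=G_n(K)$, and it can then be an isolated point of the set. Worse, in that situation the claim you are trying to prove for $r_0=0$ is actually \emph{false}. Take $K=[0,10]\times[0,1]\subseteq\R^2$, $\theta=e_1$, $p=1$: then $G_2(K)=G_2(K+C_2)=22$, so $r_0=0$ under your definition, one computes $m_{1,e_1}=18$, and at $r=1$ one has $g_{1,e_1}(1)=(17/18)^2\cdot 22\approx 19.62<20=\tilde{g}_K(e_1)$, violating the second inequality. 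The lemma itself survives because it only asserts the existence of \emph{some} crossing point (here $r_0=7$ works), and the correct repair is to take the infimum over $r>0$ rather than $r\ge 0$; the value $r=0$ then lands harmlessly in the first regime, where only the non-strict inequality $\tilde{g}_{K+C_n}(0)\ge g_{p,\theta}(0)$ is needed. To be fair, the paper's own proof suffers from the same defect: with $r_0=0$ it sets $\lambda=r_0/r=0$ and then implicitly divides by $\lambda$ to conclude $g_{p,\theta}(r)\ge\tilde{g}_K(r\theta)$. So this is a shared flaw rather than one you introduced, but as written your argument for the second inequality breaks down precisely in this case, and the case is non-vacuous.
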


\begin{proof}
Let $K\subseteq\R^n$ be a convex body with $0\in K$, $\theta\in S^{n-1}$ and $p>0$. Let $m_{p,\theta}$ and $g_{p,\theta}$ be defined as in the statement. First of all, notice that since $0\in K$ we have that $0\in\textrm{int}(K+C_n)$ and then $m_{p,\theta}>0$. Notice also that for every convex set $L$, every $\theta\in S^{n-1}$, and every $0\leq r_1<r_2$ we have that $L\cap(r_1\theta +L)\supseteq L\cap(r_2\theta+L)$ and then the functions $\tilde{g}_K(r\theta)$ and $\tilde{g}_{K+C_n}(r\theta)$ are decreasing in $r\in[0,\infty)$. Moreover, since $K\subseteq K+C_n$ we have that $K\cap(x+K)\subseteq (K+C_n)\cap(x+K+C_n)$ for every $x\in\R^n$ and then $\tilde{g}_K(x)\leq \tilde{g}_{K+C_n}(x)$ for every $x\in\R^n$. Furthermore, since $K$ is a compact convex set, $\tilde{g}_K(r\theta)$ is continuous from the left in $r\in[0,\infty)$ and since $K+C_n$ is an open bounded convex set, $\tilde{g}_{K+C_n}(r\theta)$ is continuous from the right in $r\in[0,\infty)$. Let us call
\begin{equation}\label{eq:DefinitionMtheta}
M_\theta=\max\{r\geq0\,:\, \tilde{g}_{K}(r\theta)\geq 1\}
\end{equation}
and
$$
\tilde{M}_\theta=\sup\{r\geq0\,:\, \tilde{g}_{K+C_n}(r\theta)\geq 1\}
$$
and notice that, necessarily, $m_{p,\theta}\geq M_\theta$. Otherwise, if $m_{p,\theta}<M_\theta\leq\tilde{M}_\theta$ then \\$K\cap(m_{p,\theta}\theta+K)\neq\emptyset$ and for every $0\leq r\leq m_{p,\theta}$ we have that
$$
(K+C_n)\cap\left(r\theta+K+C_n\right)\supseteq\left(1-\frac{r}{m_{p,\theta}}\right)K+\frac{r}{m_{p,\theta}}(K\cap(m_{p,\theta}\theta+K))+C_n
$$
and then, by the discrete Brunn-Minkowski inequality (Theorem \ref{thm: BM_lattice_point_no_G(K)G(L)>0}),
\begin{eqnarray*}
\tilde{g}_{K+C_n}^\frac{1}{n}(r\theta)&\geq & G_n\left(\left(1-\frac{r}{m_{p,\theta}}\right)K+\frac{r}{m_{p,\theta}}(K\cap(m_{p,\theta}\theta+K))+C_n\right)^\frac{1}{n}\cr &\geq&\left(1-\frac{r}{m_{p,\theta}}\right)G_n(K)^\frac{1}{n}+\frac{r}{m_{p,\theta}}G_n(K\cap(m_{p,\theta}\theta+K))^\frac{1}{n}\cr
&\geq&\left(1-\frac{r}{m_{p,\theta}}\right)G_n(K)^\frac{1}{n}.
\end{eqnarray*}
Therefore, for every $0\leq r\leq m_{p,\theta}$ we have that
$$
\tilde{g}_{K+C_n}(r\theta)\geq g_{p,\theta}(r)
$$
and then, taking into account that $\tilde{g}_{K+C_n}(r\theta)\geq 1$ for every $0\leq r<\tilde{M}_\theta$
\begin{eqnarray*}
m_{p,\theta}^p&=&\frac{{{n+p}\choose{n}}}{G_n(K)}\int_0^\infty pr^{p-1}\tilde{g}_{K+C_n}(r\theta)dr\cr
&=&\frac{{{n+p}\choose{n}}}{G_n(K)}\int_0^{\tilde{M}_\theta} pr^{p-1}\tilde{g}_{K+C_n}(r\theta)dr\cr
&>&\frac{{{n+p}\choose{n}}}{G_n(K)}\int_0^{m_{p,\theta}} pr^{p-1}\tilde{g}_{K+C_n}(r\theta)dr\cr
&\geq&\frac{{{n+p}\choose{n}}}{G_n(K)}\int_0^{m_{p,\theta}} pr^{p-1}g_{p,\theta}(r)dr\cr
&=&{{n+p}\choose{n}}\int_0^{m_{p,\theta}} pr^{p-1}\left(1-\frac{r}{m_{p,\theta}}\right)dr=m_{p,\theta}^p,\cr
\end{eqnarray*}
which is a contradiction.

Since we trivially have that for any $r>\tilde{M}_\theta$
$$
0=\tilde{g}_{K+C_n}(r\theta)\leq g_{p,\theta}(r),
$$
we can define
$$
r_0(p,\theta)=\inf\{r\geq0\,:\tilde{g}_{K+C_n}(r\theta)\leq g_{p,\theta}(r)\}<\infty.
$$
From the definition of $r_0(p,\theta)$ as an infimum we trivially have that for every $0\leq r<r_0(p,\theta)$
$$
\tilde{g}_{K+C_n}(r\theta)> g_{p,\theta}(r).
$$
Besides, since $\tilde{g}_{K+C_n}(r\theta)$ is continuous from the right on $[0,\infty)$ and $\tilde{g}_K(x)\leq\tilde{g}_{K+C_n}(x)$ for every $x\in\R^n$, from the definition of $r_0(p,\theta)$ as an infimum we obtain
\begin{equation}\label{eq:ComparisonAtr_0}
\tilde{g}_{K}(r_0(p,\theta)\theta)\leq\tilde{g}_{K+C_n}(r_0(p,\theta)\theta)\leq g_{p,\theta}(r_0(p,\theta)).
\end{equation}
and $r_0(p,\theta)$ is a minimum.

Notice that also, from the definition of $M_\theta$, for every $r>M_\theta$ we have that
$$
g_{p,\theta}(r)\geq \tilde{g}_K(r\theta)=0.
$$
Therefore, if $r_0(p,\theta)> M_\theta$ the theorem is proved. Thus, we will assume that $r_0(p,\theta)\leq M_\theta$.

If $r_0(p,\theta)= M_\theta$ then, by \eqref{eq:ComparisonAtr_0} we have that
$$
\tilde{g}_{K}(M_\theta\theta)\leq\tilde{g}_{K+C_n}(M_\theta\theta)\leq g_{p,\theta}(M_\theta)
$$
and the theorem is proved.

Let us assume, then, that $r_0(p,\theta)<M_\theta$ and let us prove that if $r_0(p,\theta)< r\leq M_\theta$ then $g_{p,\theta}(r)\geq \tilde{g}_K(r\theta)$.

If $r_0(p,\theta)<r\leq M_\theta$ then, taking $\lambda=\frac{r_0(p,\theta)}{r}\in[0,1)$ we have
$$
(K+C_n)\cap(r_0(p,\theta)\theta+K+C_n)\supseteq (1-\lambda) K+\lambda (K\cap(r\theta+K))+C_n
$$
and then by the discrete Brunn-Minkowski inequality (Theorem \ref{thm: BM_lattice_point_no_G(K)G(L)>0})
\begin{eqnarray*}
\tilde{g}_{K+C_n}^\frac{1}{n}(r_0(p,\theta)\theta)&\geq& G_n\left((1-\lambda) K+\lambda (K\cap(r\theta+K)+C_n)\right)^\frac{1}{n}\cr
&\geq&(1-\lambda)G_n(K)^\frac{1}{n}+\lambda G_n(K\cap(r\theta+K))^\frac{1}{n}\cr
&=&(1-\lambda)\tilde{g}_K^\frac{1}{n}(0)+\lambda \tilde{g}_K^\frac{1}{n}(r\theta).\cr
\end{eqnarray*}

Taking into account that $r_0(p,\theta)<r\leq M_\theta\leq m_{p,\theta}$
\begin{eqnarray*}
g_{p,\theta}^\frac{1}{n}(r_0(p,\theta))&=&\left(1-\frac{r_0(p,\theta)}{m_{p,\theta}}\right)G_n(K)^\frac{1}{n}\cr
&=&(1-\lambda)G_n(K)^\frac{1}{n}+\lambda\left(1-\frac{r}{m_{p,\theta}}\right)G_n(K)^\frac{1}{n}\cr
&=&(1-\lambda)\tilde{g}_K^\frac{1}{n}(0)+\lambda g_{p,\theta}^\frac{1}{n}(r).
\end{eqnarray*}
Thus, by \eqref{eq:ComparisonAtr_0}, if $r_0(p,\theta)<r\leq M_\theta$
\begin{eqnarray*}
(1-\lambda)\tilde{g}_K^\frac{1}{n}(0)+\lambda g_{p,\theta}^\frac{1}{n}(r)&=&g_{p,\theta}^\frac{1}{n}(r_0(p,\theta))\geq\tilde{g}_{K+C_n}^\frac{1}{n}(r_0(p,\theta)\theta)\cr
&\geq&(1-\lambda)\tilde{g}_K^\frac{1}{n}(0)+\lambda \tilde{g}_K^\frac{1}{n}(r\theta)
\end{eqnarray*}
and then
$$
 g_{p,\theta}(r)\geq \tilde{g}_K(r\theta).
$$
\end{proof}

\begin{rmk}\label{rmk:RadialFunctionKcapZn-K}
Let us point out that for any $\theta\in S^{n-1}$ we have that $\tilde{g}_{K}(r\theta)\geq 1$ if and only if $(K\cap\Z^n)\cap(r\theta+K)=K\cap(r\theta+K)\cap\Z^n\neq\emptyset$, which happens if and only if $r\theta\in (K\cap\Z^n)-K$. Therefore, since $\tilde{g}_{K}(r\theta)$ is decreasing in $r\in[0,\infty)$ we have that $(K\cap \Z^n)-K$ is a star set with $0$ as a center and the number $M_\theta$ defined in \eqref{eq:DefinitionMtheta} is $M_\theta=\rho_{(K\cap \Z^n)-K}(\theta)$. In the same way, $((K+C_n)\cap \Z^n)-(K+C_n)$ is a star set with $0$ as a center and $\tilde{M}_\theta=\rho_{((K+C_n)\cap \Z^n)-(K+C_n)}(\theta)$
\end{rmk}
Now, we can prove the inclusion relation given by Theorem \ref{thm:InclusionBallBodiesDiscreteCovariogram}.

\begin{proof}[Proof of Theorem \ref{thm:InclusionBallBodiesDiscreteCovariogram}]
Let $K\subseteq\R^n$ be a convex body with $0\in K$. Let $\theta\in S^{n-1}$. Fix $p>0$ and let, as in Lemma \ref{lem:r_0}, $m_{p,\theta}$ be defined as
$$
m_{p,\theta}=\left(\frac{{{n+p}\choose{n}}}{G_n(K)}\int_0^\infty pr^{p-1}\tilde{g}_{K+C_n}(r\theta)dr\right)^\frac{1}{p}
$$
and $g_{p,\theta}:[0,\infty)\to[0,\infty)$ be the function given by
$$
g_{p,\theta}(r)=\begin{cases}
\left(1-\frac{r}{m_{p,\theta}}\right)^nG_n(K)&\textrm{ if }0\leq r\leq m_{p,\theta}\\
0 &\textrm{ otherwise}.
\end{cases}
$$
Notice that, changing variables $r=m_{p,\theta}s$, for every $q>0$ we have that
\begin{eqnarray*}
\frac{{{n+q}\choose{n}}}{G_n(K)}\int_0^\infty qr^{q-1}g_{p,\theta}(r)dr&=&\frac{{{n+q}\choose{n}}}{G_n(K)}\int_0^{m_{p,\theta}} qr^{q-1}\left(1-\frac{r}{m_{p,\theta}}\right)^nG_n(K)dr\cr
&=&{{n+q}\choose{n}}m_{p,\theta}^q\int_0^1 qs^{q-1}(1-s)^nds\cr
&=&m_{p,\theta}^q.
\end{eqnarray*}
In particular, taking $q=p$, we have that
$$
\int_0^\infty r^{p-1}g_{p,\theta}(r)dr=\int_0^\infty r^{p-1}\tilde{g}_{K+C_n}(r\theta)dr.
$$

Let $q>p$. Taking $r_0(p,\theta)$ provided by Lemma \ref{lem:r_0}, we have
\begin{eqnarray*}
&&\int_0^{r_0(p,\theta)}r^{q-1}\left(\tilde{g}_{K+C_n}(r\theta)-g_{p,\theta}(r)\right)dr-\int_{r_0(p,\theta)}^\infty r^{q-1}\left(g_{p,\theta}(r)-\tilde{g}_K(r\theta)\right)dr\cr
&=&\int_0^{r_0(p,\theta)}r^{q-p}r^{p-1}\left(\tilde{g}_{K+C_n}(r\theta)-g_{p,\theta}(r)\right)dr\cr
&-&\int_{r_0(p,\theta)}^\infty r^{q-p}r^{p-1}\left(g_{p,\theta}(r)-\tilde{g}_K(r\theta)\right)dr\cr
&\leq&r_0(p,\theta)^{q-p}\int_0^{r_0(p,\theta)}r^{p-1}\left(\tilde{g}_{K+C_n}(r\theta)-g_{p,\theta}(r)\right)dr\cr
&-&r_0(p,\theta)^{q-p}\int_{r_0}^\infty r^{p-1}\left(g_{p,\theta}(r)-\tilde{g}_K(r\theta)\right)dr\cr
&=&r_0(p,\theta)^{q-p}\int_0^{r_0(p,\theta)}r^{p-1}\left(\tilde{g}_{K+C_n}(r\theta)-g_{p,\theta}(r)\right)dr\cr
&+&r_0(p,\theta)^{q-p}\int_{r_0(p,\theta)}^\infty r^{p-1}\left(\tilde{g}_K(r\theta)-g_{p,\theta}(r)\right)dr\cr
&\leq&r_0(p,\theta)^{q-p}\int_0^{r_0(p,\theta)}r^{p-1}\left(\tilde{g}_{K+C_n}(r\theta)-g_{p,\theta}(r)\right)dr\cr
&+&r_0(p,\theta)^{q-p}\int_{r_0(p,\theta)}^\infty r^{p-1}\left(\tilde{g}_{K+C_n}(r\theta)-g_{p,\theta}(r)\right)dr\cr
&=&r_0(p,\theta)^{q-p}\int_0^{\infty}r^{p-1}\left(\tilde{g}_{K+C_n}(r\theta)-g_{p,\theta}(r)\right)dr=0.
\end{eqnarray*}

Therefore, we have that
$$
\int_0^{r_0(p,\theta)}r^{q-1}\tilde{g}_{K+C_n}(r\theta)dr+\int_{r_0(p,\theta)}^\infty r^{q-1}\tilde{g}_{K}(r\theta)dr\leq\int_0^\infty r^{q-1}g_{p,\theta}(r)dr
$$
and then, since $\tilde{g}_K(x)\leq\tilde{g}_{K+C_n}(x)$ for every $x\in\R^n$,
$$
\int_{0}^\infty r^{q-1}\tilde{g}_{K}(r\theta)dr\leq\int_0^\infty r^{q-1}g_{p,\theta}(r)dr.
$$
Consequently,
\begin{eqnarray*}
{{n+q}\choose{n}}^\frac{1}{q}\rho_{K_q(\tilde{g}_K)}(\theta)&=&\left(\frac{{{n+q}\choose{n}}}{G_n(K)}\int_0^\infty qr^{q-1}\tilde{g}_{K}(r\theta)dr\right)^\frac{1}{q}\cr
&\leq&\left(\frac{{{n+q}\choose{n}}}{G_n(K)}\int_0^\infty qr^{q-1}g_{p,\theta}(r)dr\right)^\frac{1}{q}=m_{p,\theta}\cr
&=&\left(\frac{{{n+p}\choose{n}}}{G_n(K)}\int_0^\infty pr^{p-1}\tilde{g}_{K+C_n}(r\theta)dr\right)^\frac{1}{p}\cr
&=&\left(\frac{{{n+p}\choose{n}}G_n(K+C_n)}{G_n(K)}\right)^\frac{1}{p}\rho_{K_p(\tilde{g}_{K+C_n})}(\theta).
\end{eqnarray*}
Since this is true for every $\theta\in S^{n-1}$ we have the inclusion relation stated in the theorem.
\end{proof}

Finally, let us point out that, as it was shown in \cite{GZ}, the inclusion relation given by \eqref{eq:Inclusionpq} provides the following inclusion relation for the difference body $K-K$ by making $q\to\infty$:
$$
K-K\subseteq{{n+p}\choose{n}}^\frac{1}{p}R_p(K),\quad\textrm{ for all } p>-1
$$
and, since $R_p(K)=K_p(g_K)$ for every $p>0$,
$$
K-K\subseteq{{n+p}\choose{n}}^\frac{1}{p}K_p(g_K),\quad\textrm{ for all } p>0.
$$
Let us show that as a corollary, making $q\to\infty$ in Theorem \ref{thm:InclusionBallBodiesDiscreteCovariogram}, we can obtain an inclusion relation for the set
$$
(K\cap\Z^n)-K=\bigcup_{x\in K\cap\Z^n}(x-K)
$$
which is a slightly smaller set than the difference body
$$
K-K=\bigcup_{x\in K}(x-K).
$$
\begin{cor}\label{cor:DifferenceBody}
Let $K\subseteq\R^n$ be a convex body such that $0\in K$. For any $p>0$ we have that
$$
(K\cap\Z^n)-K\subseteq {{n+p}\choose{n}}^\frac{1}{p}\left(\frac{G_n(K+C_n)}{G_n(K)}\right)^\frac{1}{p}K_p(\tilde{g}_{K+C_n}).
$$
\end{cor}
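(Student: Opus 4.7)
The plan is to obtain Corollary \ref{cor:DifferenceBody} by taking the limit $q\to\infty$ in Theorem \ref{thm:InclusionBallBodiesDiscreteCovariogram}, working with radial functions and exploiting the fact that $(K\cap\Z^n)-K$ is a star set with center $0$ whose radial function equals $M_\theta=\max\{r\geq 0:\tilde{g}_K(r\theta)\geq 1\}$, as noted in Remark \ref{rmk:RadialFunctionKcapZn-K}. Since both $(K\cap\Z^n)-K$ and $K_p(\tilde{g}_{K+C_n})$ are star sets with center $0$, it suffices to compare their radial functions pointwise.

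First I would fix $\theta\in S^{n-1}$ and $p>0$, and obtain a lower bound on the radial function of $K_q(\tilde{g}_K)$ in direction $\theta$ for every $q>p$. Since $\tilde{g}_K(r\theta)$ takes integer values and is a non-increasing function of $r$ (by convexity of $K$), one has $\tilde{g}_K(r\theta)\geq 1$ on the interval $[0,M_\theta]$. Consequently,
$$
\rho_{K_q(\tilde{g}_K)}^q(\theta)=\frac{q}{G_n(K)}\int_0^\infty r^{q-1}\tilde{g}_K(r\theta)\,dr\geq\frac{q}{G_n(K)}\int_0^{M_\theta}r^{q-1}\,dr=\frac{M_\theta^q}{G_n(K)},
$$
so that $\rho_{K_q(\tilde{g}_K)}(\theta)\geq M_\theta/G_n(K)^{1/q}$.

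Next I would apply Theorem \ref{thm:InclusionBallBodiesDiscreteCovariogram} at the pair $(p,q)$ (which, for star sets, is equivalent to an inequality of radial functions) to get
$$
{{n+q}\choose{n}}^{1/q}\frac{M_\theta}{G_n(K)^{1/q}}\leq {{n+q}\choose{n}}^{1/q}\rho_{K_q(\tilde{g}_K)}(\theta)\leq {{n+p}\choose{n}}^{1/p}\left(\frac{G_n(K+C_n)}{G_n(K)}\right)^{1/p}\rho_{K_p(\tilde{g}_{K+C_n})}(\theta).
$$
The right-hand side does not depend on $q$, so letting $q\to\infty$ and using that ${{n+q}\choose{n}}^{1/q}\to 1$ (since ${{n+q}\choose{n}}$ grows polynomially in $q$) and $G_n(K)^{1/q}\to 1$, one concludes
$$
M_\theta\leq {{n+p}\choose{n}}^{1/p}\left(\frac{G_n(K+C_n)}{G_n(K)}\right)^{1/p}\rho_{K_p(\tilde{g}_{K+C_n})}(\theta).
$$
Since $M_\theta=\rho_{(K\cap\Z^n)-K}(\theta)$ and this holds for every $\theta\in S^{n-1}$, the inclusion follows from the characterization of inclusions between star sets with center $0$ via their radial functions.

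The only delicate point is the passage to the limit $q\to\infty$, and even this is essentially routine: one just needs the two elementary limits ${{n+q}\choose{n}}^{1/q}\to 1$ and $G_n(K)^{1/q}\to 1$, together with the fact that $\tilde{g}_K(r\theta)\geq 1$ on the entire interval $[0,M_\theta]$, which is where the integer-valued nature of the lattice point enumerator and the monotonicity of $r\mapsto\tilde{g}_K(r\theta)$ play their role.
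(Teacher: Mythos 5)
Your proof is correct and follows essentially the same route as the paper: both pass to the limit $q\to\infty$ in Theorem \ref{thm:InclusionBallBodiesDiscreteCovariogram} at the level of radial functions and identify $\rho_{(K\cap\Z^n)-K}(\theta)$ with $M_\theta$ via Remark \ref{rmk:RadialFunctionKcapZn-K}. The only (harmless, and in fact slightly more elementary) difference is that you lower-bound $\rho_{K_q(\tilde{g}_K)}(\theta)$ by $M_\theta/G_n(K)^{1/q}$ using $\tilde{g}_K(r\theta)\geq 1$ on $[0,M_\theta]$, whereas the paper computes the exact limit $\lim_{q\to\infty}\rho_{K_q(\tilde{g}_K)}(\theta)=M_\theta$ through the convergence of $L^{q}$-norms to the essential supremum.
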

\begin{proof}
Let $K\subseteq\R^n$ be a convex body with $0\in K$, $\theta\in S^{n-1}$ and $p>0$. We have seen in Theorem \ref{thm:InclusionBallBodiesDiscreteCovariogram} that for any $0<p<q$ we have that
$$
{{n+q}\choose{n}}^\frac{1}{q}\rho_{K_q(\tilde{g}_K)}(\theta)\leq\left(\frac{{{n+p}\choose{n}}G_n(K+C_n)}{G_n(K)}\right)^\frac{1}{p}\rho_{K_p(\tilde{g}_{K+C_n})}(\theta).
$$
Taking limits as $q\to\infty$ we have that $\displaystyle{\lim_{q\to\infty}{{n+q}\choose{n}}^\frac{1}{q}=1}$. Besides, calling as in \eqref{eq:DefinitionMtheta}, $M_\theta=\max\{r\geq0\,:\, \tilde{g}_{K}(r\theta)\geq 1\}$, if we consider the measure $d\nu(r)=\tilde{g}_K(r\theta)dr$ on the interval $[0,M_\theta]$, denote by $L^q(\nu)$ the corresponding Lebesgue space on $([0,M_\theta],d \nu)$, and take $h(r)=r$, then
$$
M_\theta=\Vert h\Vert_\infty=\lim_{q\to\infty}\Vert h\Vert_{L^{q-1}(\nu)}=\lim_{q\to\infty}\left(\int_0^{M_\theta}r^{q-1}\tilde{g}_K(r\theta)dr\right)^\frac{1}{q-1}.
$$
Therefore,
\begin{eqnarray*}
\lim_{q\to\infty}\rho_{K_q(\tilde{g}_K)}&=&\lim_{q\to\infty}\left(\frac{q}{G_n(K)}\int_0^{\infty}r^{q-1}\tilde{g}_K(r\theta)dr\right)^\frac{1}{q}\cr
&=&\lim_{q\to\infty}\left(\frac{q}{G_n(K)}\int_0^{M_\theta}r^{q-1}\tilde{g}_K(r\theta)dr\right)^\frac{1}{q}\cr
&=&\lim_{q\to\infty}\frac{q^\frac{1}{q}}{G_n(K)^\frac{1}{q}}\left(\int_0^{M_\theta}r^{q-1}\tilde{g}_K(r\theta)dr\right)^{\frac{1}{q-1}\frac{q-1}{q}}\cr
&=&M_\theta.
\end{eqnarray*}
Taking into account Remark \ref{rmk:RadialFunctionKcapZn-K}, we have
$$
\rho_{(K\cap \Z^n)-K}(\theta)\leq\left(\frac{{{n+p}\choose{n}}G_n(K+C_n)}{G_n(K)}\right)^\frac{1}{p}\rho_{K_p(\tilde{g}_{K+C_n})}(\theta).
$$
Since this is true for every $\theta\in S^{n-1}$ we obtain the result.
\end{proof}


\begin{thebibliography}{99}

\bibitem{ABG1}{\sc D. Alonso-Guti\'errez, J. Bernu\'es, B. Gonz\'alez Merino.}
\textit{Zhang's inequality for log-concave functions.}
Geometric Aspects of Functional Analysis - Israel Seminar (GAFA) 2017--2019. Lecture Notes in Mathematics {\bf 2256} (2020), 29--48.

\bibitem{ABG2}{\sc D. Alonso-Guti\'errez, J. Bernu\'es, B. Gonz\'alez Merino.}
\textit{An extension of Berwald's inequality and its relation to Zhang's inequality.}
Journal of Mathematical Analysis and Applications {\bf 486} (1) (2020), 123875.

\bibitem{AJV}{\sc D. Alonso-Guti\'errez, C. H. Jim\'enez, R. Villa.}
\textit{Brunn--Minkowski and Zhang inequalities for convolution bodies.}
Advances in Mathematics {\bf 238} (2013), 50--69.

\bibitem{AAGJV}{\sc  D. Alonso-Guti\'errez, S. Artstein-Avidan, B. Gonz\'alez Merino, C. H. Jim\'enez, R. Villa.}
\textit{Rogers--Shephard and local Loomis-Whitney type inequalities.}
 Mathematische Annalen {\bf 374} (2019), 1719--1771.


\bibitem{ALY}{\sc D. Alonso-Guti\'errez, E. Lucas, J. Yepes Nicol\'as.}
\textit{On Rogers--Shephard type inequalities for the lattice point enumerator.}
Communications in Contemporary Mathematics {\bf 25} (8) (2023), 2250022.

\bibitem{AGM}{\sc S. Artstein-Avidan, A. Giannopoulos, V. D. Milman.}
\textit{Asymptotic Geometric Analysis, Part 1.}
Mathematical Surveys and Monographs {\bf 122} (American Mathematical Society, Providence, 2015).

\bibitem{Ba}{\sc K. Ball.}
\textit{Logarithmically concave functions and sections of convex sets in $\R^n$}.
Studia Mathematica {\bf 88} (1988),  69--84.

\bibitem{Be}{\sc L. Berwald.}
\textit{Verallgemeinerung eines Mittelwetsatzes von J. Favard, f\"ur positive konkave Funktionen.}
Acta Mathematica {\bf 79}  (1947), 17--37.

\bibitem{BGVV}{\sc S. Brazitikos, A. Giannopoulos, P. Valettas, B. H. Vritsiou.} \textit{Geometry of Isotropic Convex Bodies.}
Mathematical Surveys and Monographs {\bf 196} (American Mathematical Society, Providence, RI., 2014).

\bibitem{CS}{\sc L. J. Corwin, R. H. Szczarba.}
\textit{Multivariable calculus.}
Monographs and textbooks in pure and applied matematics {\bf 64} (Marcel Dekker, 1979).

\bibitem{Fra}{\sc M. Fradelizi.}
\textit{Hyperplane sections of convex bodies in isotropic position.}
Beitr\"age zur Algebra und Geometrie {\bf 40} (1) (1999), 163--183.

\bibitem{G}{\sc R. J. Gardner.}
\textit{Geometric Tomography. 2nd Ed.}
Encyclopedia of Mathematics and Its Applications {\bf 58}, (Cambridge University Press, Cambridge 2006).

\bibitem{GZ}{\sc R. J. Gardner, G. Zhang.}
\textit{Affine inequalities and radial mean bodies.}
American Journal of Mathematics {\bf 120} (3) (1998), 505--528.

\bibitem{IYNZ}{\sc D. Iglesias, J. Yepes Nicol\'as, A. Zvavitch.}
\textit{Brunn-Minkowski type
inequalities for the lattice point enumerator.}
Advances in Mathematics {\bf 370} (2020), 107193.

\bibitem{LRZ}{\sc D. Langharst, M. Roysdon, A. Zvavitch.}
\textit{General Measure Extensions of Projection Bodies.}
Proceedings of the London Mathematical Society {\bf 125} (5) (2022), 1083--1129. %https://doi.org/10.1112/plms.12477.

\bibitem{Ma}{\sc J. Mart\'in-Go\~ni.}
\textit{A note on the reduction of the slicing problem to centrally symmetric convex bodies.}
Journal of Mathematical Analysis and Applications {\bf 525} (2) (2023), 127135.

\bibitem{MP}{\sc V. D. Milman, A. Pajor.}
\textit{Isotropic positions and inertia ellipsoids and zonoids of the unit ball of a normed n-dimensional space.} GAFA Seminar 87--89,
Lecture Notes in Mathematics {\bf 1376}, Springer (1989), 64--104.

\bibitem{Pe}{\sc C. M. Petty.}
\textit{Projection bodies.}
Proceedings of the Colloquium on Convexity (Copenhagen, 1965), 234--241, Kobenhavns Universitet Institut for Matematiske Fag, Copenhagen 1967.

\bibitem{Sch} {\sc R. Schneider.}
\textit{Convex Bodies: The Brunn--Minkowski Theory.} Encyclopedia of Mathematics and its Applications, 2nd edn.
{\bf 151} (Cambridge University Press, Cambridge, 2014).

\bibitem{TV}{\sc T. Tao, V. Vu}
\textit{Additive Combinatorics.}
Cambridge Studies in Advanced Mathematics {\bf 105} (Cambridge University Press, Cambridge 2006).

\bibitem{Zh}{\sc G. Y. Zhang.}
\textit{Restricted chord projection and affine inequalities.}
 Geometriae Dedicata {\bf 39} (1991), 213--222.


\end{thebibliography}
\end{document}